 \definecolor{blue}{HTML}{002395}
\NewDocumentEnvironment{diagram}{ O{} }{%
  \begin{figure}[#1]
  \captionsetup{type=figure,name=Diagram}%
}{%
  \end{figure}
}
\newcommand{\R}{\mathbb{R}}
\newcommand{\supp}{\mathrm{supp}\,}
\renewcommand{\d}{\mathrm{d}}
\newcommand{\proj}{\mathrm{proj}}
\newcommand{\e}{\varepsilon}
\newcommand{\un}[1]{\underline{#1}}
\newcommand{\vertiii}[1]{{\left\vert\kern-0.25ex\left\vert\kern-0.25ex\left\vert #1 \right\vert\kern-0.25ex\right\vert\kern-0.25ex\right\vert}}
\let\oldtocsection=\tocsection
\let\oldtocsubsection=\tocsubsection
\renewcommand{\tocsection}[2]{\hspace{0em}\oldtocsection{#1}{#2}}
\renewcommand{\tocsubsection}[2]{\hspace{1em}\oldtocsubsection{#1}{#2}}
\theoremstyle{plain}
\newtheorem*{theorem*}{Theorem}
\newtheorem{theorem}{Theorem}[section]
\newtheorem{lemma}[theorem]{Lemma}
\newtheorem{proposition}[theorem]{Proposition}
\newtheorem{corollary}[theorem]{Corollary}
\newtheorem{manualtheorem}{Theorem}
\newenvironment{mytheorem}[1][]{%
    \begin{manualtheorem}
}{\end{manualtheorem}}
\newtheorem{manualcorollary}{Corollary}
\newenvironment{mycorollary}[1][]{%
    \begin{manualcorollary}
}{\end{manualcorollary}}
\newtheorem{manualhyp}{Hypothesis}
\theoremstyle{remark}
\newtheorem{definition}[theorem]{Definition}
\newtheorem{remark}[theorem]{Remark}
\newtheorem{notation}[theorem]{Notation}
\newtheorem{manualstep}{\bf Step}
\newenvironment{step}[1][]{%
    \begin{manualstep}\itshape
}{\end{manualstep}}
\newlist{todolist}{itemize}{2}
\setlist[todolist]{label=$\square$}
\author[Matheus M.~Castro]{Matheus M. Castro$^{*}$}
\email{\href{mailto:m.manzatto_de_castro@unsw.edu.au}{m.manzatto\_de\_castro@unsw.edu.au}}
\author[Gary Froyland]{Gary Froyland$^{*}$}
\email{\href{mailto:m.manzatto_de_castro@unsw.edu.au}{g.froyland@unsw.edu.au}}
\address{$^{*}$School of Mathematics and Statistics, University of New South Wales,
Sydney, NSW 2052, Australia}
\title[On the cardinality of measures of maximal relative entropy]{On the cardinality of measures of maximal relative entropy for smooth skew products} 
\date{\today}
\begin{document}

\begin{abstract}
    Let $\Omega$ and $M$ be compact smooth manifolds and let $\Theta:\Omega\times M\to\Omega\times M$ be a $\mathcal C^{1+\alpha}$ skew-product diffeomorphism over a transitive Anosov base. 
We show that $\Theta$ has at most countably many ergodic hyperbolic measures of maximal relative entropy. When $\dim M=2$, if $\Theta$ has positive relative topological entropy, then $\Theta$ has at most countably many ergodic measures of maximal relative entropy.

\end{abstract}

\keywords{Measures of maximal relative entropy; skew products; random dynamical systems; hyperbolic dynamical systems; thermodynamic formalism}
\subjclass[2020]{28D20; 37D25; 37D35; 37H05}
\maketitle
\tableofcontents

\newpage
\section{Introduction and main results}

\label{sec:1}

For dynamical systems related by a factor map, the natural analogue of maximal entropy is the concept of maximal relative entropy. 
Let \((X,f)\) and \((Y,g)\) be continuous maps on compact metric spaces and let \(\pi\colon X\to Y\) satisfy \(g\circ\pi=\pi\circ f\).
Write \(\mathcal{M}(f)\) and \(\mathcal{M}(g)\) for the sets of invariant Borel probability measures of $f$ and $g$, respectively.
Given $\nu\in\mathcal{M}(g)$, a measure $\mu\in\mathcal{M}(f)$ is a measure of  maximal $\nu$-relative  entropy if $\pi_*\mu= \mu \circ \pi^{-1}=\nu$ and
$$h_\mu(f)=\sup\bigl\{\,h_\eta(f):\eta\in\mathcal{M}(f),\ \pi_*\eta=\nu\,\bigr\}.$$
Equivalently, if $h_{\nu}(g)<\infty$ the Abramov--Rokhlin formula states that $\mu$ maximises the conditional entropy 
$$h_\eta(f\mid \nu) := h_\eta\left(f\mid\pi^{-1}\mathcal{B}_Y\right)=h_\eta(f)-h_\nu(g)$$ 
among all $\pi$-lifts $\eta$ of $\nu$.
This relative viewpoint is captured by the relativised variational principle of Ledrappier--Walters and in Walters’ compensation function, extending the notion of relative entropy to relative equilibrium states \cite{LedrappierWalters77,Walters86}.

The general picture of measures of maximal relative entropy and relative equilibrium states is well advanced in symbolic dynamics, particularly for factor maps between shifts where $X$ is an irreducible subshift of finite type. Petersen--Quas--Shin proved that for each ergodic base measure $\nu$ on $Y$ there are only finitely many ergodic measures of maximal relative entropy on $X$, and they established explicit bounds on this number \cite{Quas}.  These bounds were improved in \cite{AllahbakhshiQuas12}, introducing the notion of class degree. A subsequent work extended these advances to relative equilibrium states in \cite{AllahbakhshiAntonioliYoo19}. A structure theorem for infinite-to-one factor codes $ \pi\colon X\to Y $ between irreducible subshifts of finite type was proved in \cite{YooDecomp17}:  that there exists a sofic shift $Y_1$, such that $\pi=\pi_2\circ\pi_1$. This allowed Yoo to show that if $\nu $ is an equilibrium state on $ Y $ for a sufficiently regular potential, then there is a unique lift on $X$ that is a relative equilibrium state \cite{YooDecomp17}.

Measures of maximal relative entropy and relative equilibrium states naturally arise in random dynamical systems. There, they are usually called quenched relative equilibrium states. Consider a probability-preserving base $(\Omega,\theta,\mathbb{P})$ and the skew-product map
$$\Theta:(\omega,x)\in \Omega\times M\mapsto (\theta(\omega),\,T_\omega (x))\in \Omega\times M.$$
Observe that when taking $X = \Omega\times M$, $Y = \Omega$, $f=\Theta,$ $g=\theta$, and $\pi(\omega,x)=\mathrm{proj}_\Omega(\omega,x)=\omega$ we retrieve the setting of the first paragraph under the assumption that $h_\mathbb P(\theta)<\infty.$

Foundational results of Kifer established existence and uniqueness of relative equilibrium states for random uniformly expanding transformations and identified quenched relative pressure with the principal Lyapunov exponent of the random transfer-operator cocycle \cite{Kifer92}.
Bogenschütz developed parallel notions of metric entropy and pressure for random dynamical systems and proved a variational principle, in the framework commonly used nowadays \cite{Bogenschutz92}.
Thermodynamic formalism for random systems has since been developed across a variety of settings: random symbolic dynamics \cite{BogenschutzGundlach1995ETDS,GundlachKifer2000DCDS,DenkerKiferStadlbauer2008DCDS,Stadlbauer2010StochDyn,Stadlbauer2017ETDS}; smooth expanding maps \cite{Kifer92,MSU11}; non-uniformly smooth expanding maps \cite{ArbietoMatheusOliveira03,StadlbauerSuzukiVarandas2021CMP}; and non-uniformly hyperbolic random interval maps (closed and open, with discontinuities) \cite{AtnipFroylandGonzalezTokmanVaienti2021CMP,AtnipFroylandGonzalezTokmanVaienti2023ETDS,AtnipFroylandGonzalezTokmanVaienti2024DM}; see also the references therein.

Although the quenched relative thermodynamic formalism theory for random dynamical systems is extensive, results for systems generated by diffeomorphisms remain scarce outside the SRB setting (for SRB results, see \cite{Ledrappier-Young,BlumenthalYoung2019,DragicevicFroylandGonzalezTokmanVaienti2020TAMS,Alves2023,Liu2024} and the references therein). 
In particular, except in settings where Gibbs-type bounds are available, there is no general method for bounding the number of measures of maximal relative entropy, let alone the number of relative equilibrium states, in these settings. In this paper, we study how many measures of maximal relative entropy a skew product $\Theta(\omega,x)=(\theta(\omega),T_\omega(x))$ over an Anosov base may admit.  This question is part of an ongoing program to quantify the number of measures of maximal entropy across classes of systems.  

In the setting of smooth diffeomorphisms, Newhouse proved that $\mathcal C^\infty$ diffeomorphisms on compact smooth manifolds always admit a measure of maximal entropy \cite{Newhouse}.  Buzzi later conjectured that for surface diffeomorphisms with positive topological entropy, there are at most countably many ergodic measures of maximal entropy \cite[Conjecture 1]{Buzzi1}. Sarig’s groundbreaking symbolic coding for \(\mathcal C^{1+\alpha}\) surface diffeomorphisms confirmed this countability and also settled a conjecture of Katok on periodic-orbit growth \cite{Sarig2013}.  This result was recently sharpened by Buzzi--Crovisier--Sarig, proving that $\mathcal C^\infty$ surface diffeomorphisms with positive entropy have only finitely many ergodic measures of maximal entropy, and exactly one in the transitive case, \cite{BCS}, which answered another of Buzzi's conjectures \cite[Conjecture 2]{Buzzi1}.  

In higher dimensions, Ben~Ovadia extended Sarig’s coding to non-uniformly hyperbolic diffeomorphisms on arbitrary manifolds, yielding countability results for hyperbolic measures of maximal entropy \cite{BenOvadia2018}.  More recently, Buzzi–Crovisier–Sarig established the strong positive recurrence (SPR) property, and via SPR obtained strong statistical properties for the associated invariant measures, including exponential decay of correlations and an almost-sure invariance principle \cite{buzzi2025strongpositiverecurrenceexponential}.  Related results are known for two-dimensional fibre skew products over hyperbolic bases; see Marín–Poletti–Veiga \cite{marin2025exponentialmixingmeasuresmaximal}. Analogous results are available for flows  \cite{Lima,Lima3,lima2025symbolicdynamicsnonuniformlyhyperbolic} and discontinuous maps or maps with singularities \cite{YuriCarlos,Yuri2,Poletti1,lima2024measuresmaximalentropynonuniformly}.

Our contribution is a relative analogue of these countable cardinality results. For $\mathcal{C}^{1+\alpha}$ skew–product diffeomorphisms $\Theta(\omega,x)=(\theta(\omega), T_\omega (x))$ over a transitive Anosov base $(\Omega,\theta)$ and for any ergodic $\theta$-invariant measure $\mathbb{P}$ with full support, we prove that there are only countably many hyperbolic ergodic $\Theta$-invariant measures of maximal $\mathbb P$-relative entropy. In particular, when the fibre \(M\) is a surface and $h_{\mathrm{top}}(\Theta\mid\mathbb P)>0$, the set of ergodic \(\mathbb{P}\)-relative measures of maximal entropy is countable. The proof refines the symbolic coding methods of \cite{Sarig2013,buzzi2025strongpositiverecurrenceexponential} to the skew-product setting and combines them with the finiteness techniques for factor maps due to Petersen--Quas--Shin \cite{Quas}. We expect the coding-and-factor approach developed here to be useful in random dynamics, and to provide a natural quenched perspective and new tools for constructing and counting relative equilibrium states.

\subsection*{Structure of the paper.} The paper is organised as follows. The remainder of Section~\ref{sec:1} states the main results and presents an application. Section~\ref{sec:2} recalls background from Pesin theory and reviews homoclinic classes. In Section~\ref{sec:3} we construct a natural symbolic coding for the skew product $\Theta$ that preserves the base dynamics. Section~\ref{sec:4} generalises the results of \cite{Quas} to countable-state Markov shifts. Finally, Section~\ref{sec:5} contains the proofs of the main results, Theorem~\ref{thm:A} and Corollary~\ref{cor:B} build upon the results established in Sections~\ref{sec:3} and \ref{sec:4}.

\subsection{Measures of maximal relative entropy}
Throughout this paper, $\Omega$ and $M$ will always denote compact smooth (boundaryless) manifolds. Let $\Theta$ be a $\mathcal C^{1+\alpha}$, $\alpha>0$, skew-product diffeomorphism
\begin{align}
    \Theta:\Omega\times M &\to \Omega\times M\label{eq:skew product}\\
    (\omega,x)&\mapsto (\theta \omega, T_\omega(x))\nonumber
\end{align}
where $\theta: \Omega\to \Omega$, $T_\omega : M\to M$ for each $\omega\in\Omega$.  Moreover, we define $$\mathrm{proj}_\Omega:(\omega,x)\in \Omega \times M\mapsto  \omega\in \Omega.$$
In this paper, all measures are implicitly assumed to be Borel measures.

This paper concerns the amount of $\mathbb P$-relative entropy that $\Theta$ may admit. Below, we define the metric entropy via the Brin--Katok formula \cite{BrinKatok1983LocalEntropy} since it is the shortest way to define it (see also \cite[Chapter 4]{WaltersBook}). We also define the concept of $\mathbb P$-relative entropy for skew products via an extension of the Brin--Katok formula (see \cite{Yujun}).

\begin{definition}[Metric entropy]
Let $(X,\mathrm{dist})$ be a compact metric space, $T:X\to X$ continuous, and $\mu$ a $T$-invariant  probability measure. For $x\in X$, $\varepsilon>0$, $n\in\mathbb N$, define the Bowen ball
$$B_n(x,\varepsilon):=\left\{y\in X;\ \mathrm{dist}(T^k (y), T^k(x))<\varepsilon \text{ for } 0\leq k<n\right\}.$$
The \emph{metric entropy of $\mu$ with respect to $T$} is
$$h_\mu(T):= - \int_X \lim_{\e\to 0}\limsup_{n\to\infty} \frac{1}{n}\log\mu(B_n(x,\e))\ \d\mu = - \int_X \lim_{\e\to 0}\liminf_{n\to\infty} \frac{1}{n}\log\mu(B_n(x,\e))\ d\mu.$$
\end{definition}

\begin{definition}[Relative metric entropy]
Let $\Theta$ be as \eqref{eq:skew product} and $\mu$ be a $\Theta$-invariant probability measure with $(\mathrm{proj}_\Omega)_*\mu=\mathbb P$, and disintegrate $\mu(\mathrm{d}\omega,\mathrm{d}x)=\mu_\omega(\mathrm{d}x)\,\mathbb P(\mathrm{d}\omega).$
For $\omega\in\Omega$, $x\in M$, $\varepsilon>0$, $n\in\mathbb N$, write
$$B_n^\omega(x,\varepsilon):=\left\{y\in M;\ \mathrm{dist}\left(T_\omega^{k}(y),T_\omega^{k}(x)\right)<\varepsilon \text{ for } 0\leq k<n\right\},$$
where $T_\omega^k(x) := T_{\theta^{k}\omega}\circ T_{\theta^{k-1}\omega}\circ \cdots \circ T_{\omega}(x).$ The \emph{$\mathbb P$-relative metric entropy of $\mu$ with respect to $\Theta$} is
\begin{align*}
h_\mu(\Theta\mid\mathbb P)&:=-\int_\Omega \int_M \lim_{\e\to 0}\limsup_{n\to \infty}  \frac{1} {n}\log(\mu_\omega(B_n^\omega(x,\e) )) \,\mu_\omega(\mathrm{d}x)\,\mathbb P(\mathrm{d}\omega)\\
&=-\int_\Omega \int_M \lim_{\e\to 0}\liminf_{n\to \infty}  \frac{1}{n}\log(\mu_\omega(B_n^\omega(x,\e) )) \,\mu_\omega(\mathrm{d}x)\,\mathbb P(\mathrm{d}\omega).\\   
\end{align*}
We recall that if $h_\mathbb P(\theta) <\infty$, which will always be the case in this paper, we have that $h_\mu(\Theta\mid \mathbb P) = h_\mu(\Theta) - h_\mathbb P(\theta)$ (see \cite[Chapter 5, below equation 1.1.5]{KatokKifer}).

\end{definition}
\begin{definition}[Measures of maximal $\mathbb P$-relative entropy]
Let $\Theta$ be as \eqref{eq:skew product} and fix a $\theta$-invariant probability measure $\mathbb P$. We define the \emph{$\mathbb P$-relative topological  entropy} as
$$h_\mathrm{top}(\Theta\mid \mathbb P) := \sup\{h_\nu(\Theta\mid\mathbb P);\ (\proj_\Omega)_*\nu = \mathbb P\ \text{and}\ \nu\ \text{is a }\Theta\text{-invariant probability measure}\}.$$
We say that a $\Theta$-invariant measure $\mu$ is a \emph{measure of maximal $\mathbb P$-relative entropy} if
$$h_\mu(\Theta\mid \mathbb P) =h_{\mathrm{top}}(\Theta\mid \mathbb P). $$

\end{definition}

To formulate our results, we impose additional assumptions on the skew-product base $\theta:\Omega\to\Omega$ and the measure $\mathbb P$, namely, we assume $\theta$ is a transitive Anosov diffeomorphism and $\mathbb P$ is a $\theta$-invariant probability measure of full support, i.e.
$$\supp \mathbb P:= \{\omega\in\Omega;\, \mathbb P[U]>0\ \text{for every open neighbourhood $U$ of $\omega$}\}=\Omega$$

\begin{definition}
Let \(\Omega\) be a compact connected smooth manifold and let \(\theta:\Omega\to\Omega\) be a \(\mathcal C^{1+\alpha}\) diffeomorphism.  We say that \(\theta\) is \emph{Anosov} if there exist a continuous \(\mathrm{D}\theta\)-invariant splitting $T_\omega\Omega=E^s(\omega)\oplus E^u(\omega),$ and constants \(K\ge1\), \(\chi>0\) such that for all \(k\ge0\),
$$ \mathrm{max}\left(\left\|\left.\mathrm D \theta^k\right|_{E^s(\omega')}\right\|,\left\|\left.\mathrm D \theta^{-k}\right|_{E^u(\omega')}\right\| \right)\leq K e^{-\chi k}.$$

 We say that \(\theta\) is a \emph{transitive Anosov} diffeomorphism if \(\theta\) is Anosov and \(\theta\) is topologically transitive, i.e. there exists \(\omega\in\Omega\) with \(\overline{\{\theta^k(\omega):k\in\mathbb Z\}}=\Omega\).
\end{definition}

\subsection{Main results}

Below we state our main results. The definition of hyperbolic measure used here is recalled in Definition~\ref{def:LE}.

\begin{mytheorem}[A] \label{thm:A} Let $\Omega$ and $M$ be compact smooth manifolds, and 
\begin{align*}
    \Theta:\Omega\times M &\to \Omega\times M\\
    (\omega,x)&\mapsto (\theta \omega, T_\omega(x))
\end{align*}be a $\mathcal C^{1+\alpha}$ diffeomorphism, for some $\alpha>0$. Assume that $\theta:\Omega\to\Omega$ is a transitive Anosov diffeomorphism and $\mathbb P$ is a $\theta$-invariant ergodic probability measure with full support. Then, $\Theta$ admits at most countably many hyperbolic ergodic measures of maximal $\mathbb{P}$-relative entropy.
\end{mytheorem}

Theorem  \ref{thm:A} is proved in Section \ref{sec:5}.  We mention that many natural measures satisfy the full-support assumption; for example, since $\theta$ is a transitive Anosov diffeomorphism, every ergodic equilibrium state for a H\"older potential has full support (see \cite{BowenGibbs}).

We remark that Theorem \ref{thm:A} is a generalisation of the classical non-skew-product setting. Let $T:M\to M$ be a $\mathcal C^{1+\alpha}$ map, and $\theta:\mathbb T^2\to\mathbb T^2$ be Arnold’s cat map
$$\theta\left(\omega_{1},\omega_{2}\right)
=\left(2\omega_{1}+\omega_{2},\,\omega_{1}+\omega_{2}\right)\ \mathrm{mod}\ 1.$$ Consider the product map $\Theta(\omega,x)=(\theta(\omega),T(x))$ and let \(\mathbb P\) be Lebesgue measure on \(\mathbb T^{2}\) (which is \(\theta\)-ergodic). Then any $\Theta$-invariant measure
$\mu$ satisfying $(\mathrm{proj}_\Omega)_*\mu=\mathbb P$ has the form $\mu=\mathbb P\otimes\mu'$, where $\mu'$ is $T$-invariant on $M$, and
$$h_{\mu}\big(\Theta\,\big|\,\mathbb P\big)=h_{\mu'}(T).$$
Hence, measures of maximal $\mathbb P$-relative entropy for $\Theta$
correspond exactly to measures of maximal entropy for $T$. In particular,
Theorem~\ref{thm:A} yields the usual countability statement for ergodic
hyperbolic measures of maximal entropy in the non–skew-product setting in conformance with \cite[Theorem 0.5]{BenOvadia2018}.

We stress that the hyperbolicity assumption in Theorem~\ref{thm:A} is essential. Observe that the extension of $\theta$ by the identity map $\Theta(\omega,x) = (\theta(\omega),x)$  admits uncountably many ergodic measures of maximal $\mathbb P$-relative entropy; in this case, all such measures have zero $\mathbb P$-relative entropy. A similar example with positive $\mathbb P$-relative entropy is
$\Theta:\Omega\times \mathbb T^3\to \Omega\times \mathbb T^3$ with 
$$\Theta(\omega,(x_1,x_2,y)) = (\theta \omega, (2 x_1 + x_2, x_1 + x_2,y)),$$
which admits uncountably many ergodic measures of maximal $\mathbb P$-relative entropy.

Moreover, for fibre maps in dimension $n\geq 3$, the countability conclusion for hyperbolic measures of maximal entropy in Theorem \ref{thm:A} is sharp, i.e.\ there exists a $\mathcal C^{\infty}$ skew-product diffeomorphism $\Theta:\Omega\times\mathbb T^n\to\Omega\times\mathbb T^n$ satisfying the hypotheses of Theorem \ref{thm:A} which admits infinitely many hyperbolic ergodic measures of maximal $\mathbb P$-relative entropy. As before, take $\mathbb P$ as the Lebesgue measure in $\mathbb T^2$ and $\theta:\mathbb T^2\to\mathbb T^2$ as Arnold's cat map, and set $\Theta(\omega,x)=(\theta(\omega),T(x))$, where $T:\mathbb T^n\to\mathbb T^n$ admits infinitely many hyperbolic measures of maximal entropy. One possible construction of $T$ is as follows.

Define
$$
T:(x,s)\in \mathbb T^{n-1}\times \mathbb S^1 \mapsto (A(x),F(s))\in \mathbb T^{n-1}\times \mathbb S^1,
$$
where $A\in \mathrm{SL}(n-1,\mathbb Z)$ is a transitive Anosov toral automorphism, and $F\colon \mathbb S^1\to \mathbb S^1$ is the function
$$F(s)= s + \delta\, e^{-\left(\frac{1}{s^2} + \frac{1}{(1-s)^2}\right)}  \sin\left(\frac{2\pi}{s}\right)\pmod{1},
$$
where $\delta>0$ is taken sufficiently small so that $F$ is a diffeomorphism and $F'(s)>0$ for every $s\in\mathbb S^1$. One readily verifies that $F\in\mathcal C^\infty$ and 
$$F\left(\frac{1}{j}\right)= \frac{1}{j}\ \text{and }0<F'\left(\frac{1}{j}\right) = 1  -2\delta \pi j^2 \,   e^{-\left(1+\frac{1}{(j-1)^2}\right) j^2} <1\ \text{for every }j\in \mathbb N_{>1} .$$
 Since circle diffeomorphisms satisfy $h_{\mathrm{top}}(F)=0$, we obtain that $h_{\mathrm{top}}(T)=h_{\mathrm{top}}(A)$. If $m$ is the (unique) measure of maximal entropy for $A$ (in fact, the Lebesgue measure on $\mathbb T^{n-1}$), then for each $j\in\mathbb N_{>1}$ the product $
m_j \coloneqq m\otimes \delta_{1/j}$ is ergodic, hyperbolic, and satisfies $h_{m_j}(T)=h_{m}(A)=h_{\mathrm{top}}(T)$. Thus $T$ admits infinitely many hyperbolic ergodic measures of maximal entropy.

In dimension two, Theorem \ref{thm:A} can be strengthened.

\begin{mycorollary}[B]\label{cor:B} Under the assumptions of Theorem \ref{thm:A} if we assume that $M$ is a compact smooth surface and $h_{\mathrm{top}}(\Theta\mid\mathbb P)>0$  then $\Theta$ admits at most countably many measures of maximal $\mathbb P$-relative entropy.
\end{mycorollary}
Corollary \ref{cor:B} is proved in Section \ref{sec:5}.

In dimension two, it may be possible to strengthen the conclusion of ``at most countably many measures of maximal $\mathbb P$-relative entropy'' of Corollary \ref{cor:B} to ``at most finitely many measures of maximal $\mathbb P$-relative entropy'', in analogy with the classical non-skew-product case \cite[Corollary 1.4]{BCS}.
Another natural future direction is to derive a criterion for uniqueness. In \cite{BCS} (see also \cite{buzzi2025strongpositiverecurrenceexponential}) it is shown that each homoclinic class supports a unique measure of maximal entropy. In particular, any smooth, transitive surface system with positive entropy has a unique measure of maximal entropy. The analogous statement for measures of maximal relative entropy remains unclear. Finally, we remark that the  Anosov hypothesis may extend to Axiom A, but the symbolic system for $\Theta$ used in Theorem~\ref{thm:1block} would require a different construction.

It is worth mentioning that, as a consequence of a celebrated result due to Newhouse \cite{Newhouse}, $\mathcal C^\infty$ skew products always admit a measure of maximal $\mathbb P$-relative entropy.
\begin{proposition} \label{prop:newhouse}Assume that the skew product $\Theta$ is $\mathcal C^\infty,$ and let $\mathbb P$ be any $\theta$-invariant measure. Then $\Theta$ admits a  $\mathbb P$-relative measure of maximal entropy.
\end{proposition}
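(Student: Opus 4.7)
The plan is to combine Newhouse's classical upper semi-continuity of metric entropy for $\mathcal C^\infty$ diffeomorphisms with a compactness argument on the set of $\Theta$-invariant measures whose base marginal equals $\mathbb P$. Since $\Omega\times M$ is itself a compact smooth manifold and $\Theta$ is a $\mathcal C^\infty$ diffeomorphism of it, Newhouse's theorem applies verbatim: the map $\mu\mapsto h_\mu(\Theta)$ is upper semi-continuous on $\mathcal M(\Theta)$ equipped with the weak-$*$ topology.

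Next I would introduce the $\mathbb P$-fibre
$$\mathcal M_{\mathbb P}(\Theta):=\{\mu\in\mathcal M(\Theta);\ (\proj_\Omega)_*\mu=\mathbb P\}$$
and show it is non-empty and weak-$*$ compact. Non-emptiness follows from a Krylov--Bogolyubov averaging argument: starting with any probability $\mu_0$ on $\Omega\times M$ projecting to $\mathbb P$ (for instance $\mu_0 = \mathbb P\otimes\nu_0$ for any probability $\nu_0$ on $M$), take Cesàro averages $\mu_n:=\tfrac1n\sum_{k=0}^{n-1}\Theta_*^k\mu_0$. Since $\proj_\Omega\circ\Theta=\theta\circ\proj_\Omega$ and $\theta_*\mathbb P=\mathbb P$, each $\mu_n$ still projects to $\mathbb P$; any weak-$*$ subsequential limit is $\Theta$-invariant and, by weak-$*$ continuity of the pushforward $\mu\mapsto(\proj_\Omega)_*\mu$, still projects to $\mathbb P$. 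The same continuity yields closedness and hence compactness of $\mathcal M_{\mathbb P}(\Theta)$.

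Upper semi-continuity then forces existence of a maximiser: $\mu\mapsto h_\mu(\Theta)$ attains its supremum on the compact set $\mathcal M_{\mathbb P}(\Theta)$ at some $\mu^\ast$. Because $\theta$ is a Lipschitz self-map of a compact manifold, $h_{\mathrm{top}}(\theta)<\infty$, so $h_\mathbb P(\theta)<\infty$; the Abramov--Rokhlin identity recalled just after the definition of relative metric entropy then gives $h_\mu(\Theta\mid\mathbb P)=h_\mu(\Theta)-h_\mathbb P(\theta)$ for every $\mu\in\mathcal M_{\mathbb P}(\Theta)$. Since $h_\mathbb P(\theta)$ is a $\mu$-independent constant offset, the same $\mu^\ast$ maximises $h_\mu(\Theta\mid\mathbb P)$ over $\mathcal M_{\mathbb P}(\Theta)$, which is exactly the desired measure of maximal $\mathbb P$-relative entropy.

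There is no serious obstacle: the statement is essentially Newhouse's theorem restricted to the weak-$*$ closed and $\Theta$-invariant set $\mathcal M_{\mathbb P}(\Theta)$. The only points requiring mild attention are the verification of non-emptiness and compactness of $\mathcal M_{\mathbb P}(\Theta)$, and the finiteness of $h_\mathbb P(\theta)$, which ensures that maximising the Brin--Katok relative entropy over this fibre is equivalent to maximising the absolute entropy.
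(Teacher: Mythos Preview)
Your proposal is correct and follows essentially the same approach as the paper's own proof: invoke Newhouse's upper semi-continuity of $\mu\mapsto h_\mu(\Theta)$, maximise over the weak-$*$ compact fibre $\mathcal M_{\mathbb P}(\Theta)$, and subtract the constant $h_{\mathbb P}(\theta)$ via Abramov--Rokhlin. You supply a bit more detail (non-emptiness via Krylov--Bogolyubov and the finiteness of $h_{\mathbb P}(\theta)$), but the argument is the same.
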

\begin{proof} From \cite[Theorem 4.1]{Newhouse} the map 
$$\mu \in \left\{\nu;\ \nu\text{ is a $\Theta$-invariant probability measure}\right\}\mapsto h_\mu (\Theta)\in \mathbb R$$
 is upper semi-continuous with respect to the weak$^*$ topology. Since $$ \mathcal M_{\mathbb P}(\Theta):=\left\{\nu;\ \nu\text{ is }\Theta\text{-invariant probability measure } \text{ and }\nu(\mathrm{proj}_\Omega^{-1}(\cdot)) = \mathbb P\right\}$$
 is compact in the weak$^*$ topology, there exists  $\widetilde{\mu}\in \mathcal M_\mathbb P (\Theta)$ such that
 $$h_{\widetilde{\mu}}(\Theta) = \sup\left\{h_\nu(\Theta);\ \nu\in\mathcal M_{\mathbb P}(\Theta)\right\}.$$
 Since $h_{\nu}(\Theta\mid \mathbb P) = h_{\nu}(\Theta) - h_{\mathbb P}(\theta)$ for each $\nu\in \mathcal M_\mathbb P(\Theta)$ we obtain that $\widetilde{\mu}$ is a measure of maximal $\mathbb P$-relative entropy.
\end{proof}

Throughout the paper, if $X$ is a topological space and $T:X\to X$ is a measurable map, we denote:
\begin{itemize}
\item $\mathcal B\left(X\right)$ as the Borel $\sigma$-algebra of $X$;
    \item $\mathcal M(X):=\left\{\mu;\ \mu \ \text{is a positive measure on }X\right\};$ 
    \item $\mathcal M(T):=\left\{\mu\in \mathcal M\left(X\right);\ T_*\mu := \mu(T^{-1}(\cdot)) = \mu\ \text{and }\mu\ \text{is a probability measure}\right\};$
    \item $\mathcal M_e(T):= \{\mu \in \mathcal M (T);\ \mu\ \text{is ergodic}\}$.
\end{itemize}

\subsection{An application of Corollary \ref{cor:B}} 
Let \(\mathbb T^2=\mathbb R^2/\mathbb Z^2\) be the flat torus. A classical and notoriously challenging dynamical system is the standard map on $\mathbb T^2$. One convenient version of the standard map is \(T:\mathbb T^2\to\mathbb T^2\) given by
\[
T(x_1,x_2)=(2x_1-x_2+k\cos(2\pi x_1),\,x_1)\pmod{1},\ \text{for some } k>0.
\]
Sinai conjectured that for $k$ sufficiently large, $T$ has positive metric entropy with respect to Lebesgue measure \cite[Page 144]{Sinai1994}. This conjecture remains open. In contrast, much more is known about measures of maximal entropy: Obata showed that, for large $k$, the standard map admits a unique measure of maximal entropy; moreover, such a measure has positive metric entropy \cite{Obata2021}. However, less is known in the random setting. 
Our results show that there exists at least one and, at most, countably many ergodic relative measures of maximal entropy for certain classes of random perturbations of the standard map.

For each $k>0$, we consider the skew product 
\begin{align*}
    \Theta_k:\mathbb T^2 \times \mathbb T^2 &\to \mathbb T^2 \times \mathbb T^2\\
    ( \omega,x)&\mapsto (A \omega, T_\omega(x)), 
\end{align*}
where $A:\mathbb T^2\to \mathbb T^2$ is Arnold's cat map $$A(\omega_1,\omega_2) = (2\omega_1 + \omega_2,\omega_1+\omega_2)\pmod{1},$$
and $T_\omega:\mathbb T^2\to\mathbb T^2$ is a perturbed version of the standard map
$$T_\omega(x_1,x_2) := (2 x_1 - x_2+ k \cos(2\pi x_1) + f(\omega) , x_1)\pmod{1},\ \text{for every }\omega\in \Omega,$$
where $f:\mathbb T^2\to\mathbb S^1$ is a $\mathcal C^\infty$ function. 
\begin{proposition}\label{prop:example}
Let $\mathbb P$ be a $\theta$-invariant ergodic measure with full support. Then, there exists $k_0>0$, such that for every $k> k_0,$ one has $h_{\mathrm{top}}(\Theta_k\mid \mathbb{P})>0$. In particular, Proposition \ref{prop:newhouse} implies that $\Theta_k$ admits a measure of maximal $\mathbb P$-relative entropy and Corollary \ref{cor:B} ensures that there are only countably many such ergodic measures. 
\end{proposition}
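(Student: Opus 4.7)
The plan is to exhibit, for $k$ sufficiently large, an explicit $\Theta_k$-invariant probability measure that projects to $\mathbb P$ and has positive fibre entropy, which suffices by the variational characterisation of $h_{\mathrm{top}}(\Theta_k\mid\mathbb P)$. The key structural observation is that the perturbation enters only as the additive term $f(\omega)$ in the first coordinate, so
$$DT_\omega(x_1,x_2)=\begin{pmatrix}2-2\pi k\sin(2\pi x_1) & -1\\ 1 & 0\end{pmatrix}$$
is independent of $\omega$: the pointwise hyperbolicity of the fibre maps is $\omega$-uniform, and only the geometric placement of images changes with $\omega$.

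First I would recall that for $k$ large enough the classical standard map $T$ admits a Smale horseshoe. Concretely, for rectangles $R_+,R_-\subset\mathbb T^2$ concentrated around $x_1=1/4$ and $x_1=3/4$ (where $|2-2\pi k\sin(2\pi x_1)|\gtrsim k$), the images $T(R_\pm)$ wrap around $\mathbb T^2$ many times in the $x_1$-direction and cross each of $R_+,R_-$ transversally in $N=N(k)\ge 2$ components. This yields a compact $T$-invariant set $\Lambda\subset R_+\cup R_-$ on which $T$ is conjugate to the full shift on $N$ symbols.

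Next I would upgrade this to a random horseshoe for $\Theta_k$. Because $T_\omega(R_\pm)$ is the rigid translate of $T(R_\pm)$ by $(f(\omega),0)$, the combinatorial Markov structure is preserved for every $\omega$: each $T_\omega(R_i)\cap R_j$ remains a union of $N$ horizontal substrips, transverse to the constant invariant cone fields inherited from $T$ (whose invariance under $DT_\omega=DT$ is automatic). Applying the graph-transform argument fibre-wise, I would construct a measurable family of compact sets $\{K_\omega\subset R_+\cup R_-\}$ with $T_\omega(K_\omega)=K_{A\omega}$, together with measurable conjugacies $h_\omega\colon K_\omega\to\{1,\ldots,N\}^{\mathbb Z}$ intertwining $T_\omega|_{K_\omega}$ with the two-sided shift.

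Finally, pushing the uniform Bernoulli measure on $\{1,\ldots,N\}^{\mathbb Z}$ through $\{h_\omega^{-1}\}$ yields fibre measures $\{\mu_\omega\}$, and $\mu:=\int\delta_\omega\otimes\mu_\omega\,\mathrm d\mathbb P(\omega)$ is $\Theta_k$-invariant with $(\mathrm{proj}_\Omega)_*\mu=\mathbb P$ and fibre entropy $\log N>0$, so that $h_{\mathrm{top}}(\Theta_k\mid\mathbb P)\ge\log N>0$. Proposition~\ref{prop:newhouse} then supplies existence of a measure of maximal $\mathbb P$-relative entropy and Corollary~\ref{cor:B} the countability bound. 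The main difficulty I anticipate is arranging the conjugacies $h_\omega$ to depend Borel measurably on $\omega$; the essential ingredients are the $\omega$-uniformity of the hyperbolic splitting and the joint Borel dependence of the stable/unstable manifolds on initial data, both standard but requiring care in the random setting.
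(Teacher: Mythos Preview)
Your approach is correct in outline and shares the same starting observation as the paper---that $DT_\omega$ is independent of $\omega$ and that the $\omega$-dependence enters only through a rigid translation of the image---but the execution is heavier than what the paper actually does. You build a full two-sided random horseshoe: measurable families $\{K_\omega\}$, fibrewise conjugacies $h_\omega$ to $\{1,\dots,N\}^{\mathbb Z}$, and then push forward the Bernoulli measure, worrying (rightly) about measurable dependence of $h_\omega$ on $\omega$. The paper bypasses all of this. It fixes two vertical strips $S_1=J_1\times\mathbb S^1$ and $S_2=J_2\times\mathbb S^1$ with $J_i\subset\{|2-2\pi k\sin(2\pi x_1)|\ge 9\}$, proves a one-step graph-transform lemma (any admissible graph in $S_i$ is mapped by $T_\omega$ across both $S_1$ and $S_2$), and uses it to show that the \emph{one-sided} itinerary map $h_\omega:X_\omega\to\{1,2\}^{\mathbb N}$ is surjective for every $\omega$. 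This gives a surjective skew factor map $\pi:X_\Omega\to\Omega\times\{1,2\}^{\mathbb N}$ over the same base, and the bound $h_{\mathrm{top}}(\Theta_k\mid\mathbb P)\ge h_{\mathrm{top}}(\sigma)=\log 2$ follows directly from the factor inequality for relative topological entropy---no invariant measure is ever constructed, no conjugacy is needed, and no measurable selection arises. Your route has the advantage of producing an explicit hyperbolic measure with fibre entropy $\log N$, but for the stated proposition the paper's purely topological argument is shorter and sidesteps exactly the difficulty you flagged.
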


\begin{remark}
Examples of choices for $f$ are: 
\begin{itemize}
    \item $f(\omega_1,\omega_2) = \omega_1$ (large random additive kicks);
    \item $f(\omega_1,\omega_2) = \e \sin(2\pi \omega_1)$ for any $\e>0$ (arbitrarily small random additive kicks); and
    \item $f(\omega_1,\omega_2)= 2\omega_1 + \e \sin(2\pi \omega_2)$ (random kicks of mixed sizes depending on both base coordinates).
\end{itemize}
\end{remark}

\section{Hyperbolic measures and Borel homoclinic classes}
\label{sec:2}
In this section, we collect standard facts from hyperbolic dynamics and introduce the concept of Borel homoclinic classes (see \cite[Definition 2.7]{buzzi2025strongpositiverecurrenceexponential}). Fix a compact smooth manifold $E$ and a $\mathcal{C}^{1+\alpha}$ diffeomorphism $\Theta:E\to E$, for some $\alpha>0$. Unless stated otherwise, all statements in this section are made under this standing assumption. For the purposes of this paper, the set $E$ can be thought of as $\Omega\times M$ as in Theorem \ref{thm:A}. Our exposition closely follows \cite[Section 2]{buzzi2025strongpositiverecurrenceexponential}.

\subsection{Hyperbolic measures}\label{sec:hyperbolic}
By the Oseledets' multiplicative ergodic theorem \cite{Oseledets68}, there exists a Borel set $E'\subset E$ such that
\begin{itemize}
  \item $E'$ has full measure for every $\Theta$-invariant probability measure; and
  \item for every $x\in E'$ there is a finite set $\sigma(x)\subset\mathbb R$ and a $\mathrm{D} \Theta$-invariant splitting
  $$
  T_xE = \bigoplus_{\lambda\in\sigma(x)} E_x^\lambda,
  $$
  where each $E_x^\lambda\neq\{0\}$ is a linear subspace defined by 
  $$
  E_x^\lambda = \left\{v\in T_xE\setminus\{0\};\ \lim_{n\to\infty}\frac{1}{n}\log\big\|\mathrm D_x \Theta^{n} v\big\|=\lambda\right\}\ \cup\ \{0\}.
  $$
\end{itemize}

\begin{definition}\label{def:LE}
Let $\Theta:E\to E$ be as above and $x\in E'$. The set $$\sigma(x)=\{\lambda_1(x),\ldots,\lambda_{k(x)}(x)\}$$ is called the \emph{Lyapunov spectrum of $\Theta$ at $x$}, its elements are the \emph{Lyapunov exponents of $\Theta$ at $x$}. For $\chi>0$ and $\mu\in\mathcal M(\Theta)$ we say that $\mu$ is \emph{$\chi$-hyperbolic} if, for $\mu$-a.e.\ $x\in E$. 
\begin{itemize}
  \item[(i)] $\min\{|\lambda|;\ \lambda\in\sigma(x)\}\geq \chi$;
  \item[(ii)] there are both positive and negative exponents, i.e.
  $$
  \sigma^{+}(x):=\{\lambda\in\sigma(x):\lambda>0\}\neq \ \varnothing
  \quad\text{and}\ 
  \sigma^{-}(x):=\{\lambda\in\sigma(x):\lambda<0\}\neq\varnothing.
  $$
\end{itemize}
We then set
$$
E_x^{s}:=\bigoplus_{\lambda\in\sigma^{-}(x)} E_x^\lambda
\ \text{and}\ 
E_x^{u}:=\bigoplus_{\lambda\in\sigma^{+}(x)} E_x^\lambda.
$$

If $\mu\in\mathcal M_e(\Theta)$ is ergodic, then there exist real numbers $\lambda_1<\cdots<\lambda_k$ and integers $m_1,\ldots,m_k\geq 1$ such that, for $\mu$-a.e.\ $x\in E$,
$$
\sigma(x)=\{\lambda_1,\ldots,\lambda_k\}
\ \text{and}\ 
\dim E_x^{\lambda_i}=m_i, \text{ for } i=1,\ldots,k.
$$

Finally, we recall that if \textnormal{(i)} holds but \textnormal{(ii)} fails, then $\mu$ is supported on a periodic orbit and hence has zero metric entropy \cite[Corollary S.5.2]{KatokBook} (see also the proof of \cite[Theorem 4.2]{Katok80}).
\end{definition}

Below, we recall a few results of Pesin theory. These definitions will be useful for defining the concept of Borel homoclinic classes in the following section.

\begin{definition}[$(\chi,\e)$-Pesin block] Given $\chi,\e>0$, a \emph{$(\chi,\e)$-Pesin block} is a nonempty set $\Lambda\subset E$ for which there exists a direct sum $T_x E= E^s(x)\oplus E^u(x)$ for all $x\in \bigcup_{n\in\mathbb Z} \Theta^n(\Lambda),$  and a number $K>0$ such that for any $n\in\mathbb Z$, $k\geq 0,$ and $y\in \Lambda$,
$$ \mathrm{max}\left(\left\|\left.\mathrm D \Theta^k\right|_{E^s(\Theta^n(y))}\right\|,\left\|\left.\mathrm D \Theta^{-k}\right|_{E^u(\Theta^n(y))}\right\| \right)\leq K e^{-\chi k + \e |n|}.$$
\end{definition}
We recall that Pesin blocks are not necessarily $\Theta$-invariant sets.

Below we state Pesin's stable manifold theorem, whose proof can be found at \cite{Pesin} (see also \cite[Theorem 2.2]{buzzi2025strongpositiverecurrenceexponential}).
\begin{theorem}\label{thm:localstableunstable} There exists a continuous function $\e_{0}$ with the following property: Given $\chi,\e >0$, with $\e<\e_{0}(\chi)$, and a  $(\chi,\e)$-Pesin block $\Lambda$. Then each $x\in \Lambda$ admits a $\mathcal C^{1+\alpha}$ embedded disc $W_{\mathrm{loc}}^s(x)$ satisfying:
$$\text{for each }y\in W^s_{\mathrm{loc}}(x),\ \limsup_{n\to\infty}\frac{1}{n} \log \mathrm{dist}_{N}(\Theta^n(x),\Theta^n(y))<0. $$
Moreover, the embedded disc $W_{\mathrm{loc}}^s$ varies continuously with $x\in\Lambda$ in the $\mathcal C^{1+\alpha}$ topology. The same result applied to $\Theta^{-1}$ gives the local unstable manifold, which we call $W^u_{\mathrm{loc}}(x).$
\end{theorem}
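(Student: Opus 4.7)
The plan is to follow the classical Hadamard--Perron/graph transform approach in Pesin adapted charts, which is the standard route and is the one alluded to by the citation \cite{Pesin}. First I would construct Lyapunov (adapted) inner products $\langle\cdot,\cdot\rangle_y$ on $T_yE$ for $y\in\bigcup_{n\in\mathbb Z}\Theta^n(\Lambda)$ that make the splitting $E^s(y)\oplus E^u(y)$ orthogonal and for which $\mathrm D\Theta|_{E^s}$ strictly contracts by at least $e^{-\chi+C\varepsilon}$ and $\mathrm D\Theta|_{E^u}$ strictly expands by at least $e^{\chi-C\varepsilon}$; the $(\chi,\varepsilon)$-Pesin block hypothesis, together with $\varepsilon<\varepsilon_0(\chi)$ for an appropriately chosen continuous $\varepsilon_0$, is precisely what ensures that these adapted inner products exist, distort the Riemannian norm by at most a factor growing like $e^{\varepsilon|n|}$, and leave a genuine spectral gap after absorbing the $C\varepsilon$ loss.

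Next I would pull $\Theta$ back through the exponential map at each point $\Theta^n(y)$ of an orbit based at $y\in\Lambda$, obtaining a sequence of maps $\widetilde\Theta_n$ defined on balls of radius $r_n\asymp r_0 e^{-\varepsilon|n|}$ in $T_{\Theta^n(y)}E\cong E^s(\Theta^n(y))\oplus E^u(\Theta^n(y))$, each of the form ``linear hyperbolic part plus $\mathcal C^{1+\alpha}$ remainder whose $\mathcal C^1$ norm is $\mathcal O(\varepsilon)$ on the small ball.'' On the space of Lipschitz graphs $\varphi:B^s(0,r_0)\to E^u(y)$ with Lipschitz constant bounded by a small $\kappa$, I would set up a graph transform sending $\varphi$ to the graph obtained by $\widetilde\Theta_0^{-1}$-pulling back and trimming; with $\varepsilon$ small compared to $\chi$, this operator is a contraction in the $\mathcal C^0$ norm on graphs, and its unique fixed point defines $W^s_{\mathrm{loc}}(y)$. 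The exponential contraction along the pulled-back orbit delivers directly the Lyapunov-stability conclusion $\limsup_n \tfrac1n\log\mathrm{dist}(\Theta^n(x),\Theta^n(y))<0$ for $y\in W^s_{\mathrm{loc}}(x)$.

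For the $\mathcal C^{1+\alpha}$ regularity and continuous dependence in the $\mathcal C^{1+\alpha}$ topology, I would bootstrap: after producing the Lipschitz fixed point, rerun the contraction in the space of $\mathcal C^{1+\alpha}$ graphs using the Hölder continuity of $\mathrm D\Theta$, the estimate on the remainder, and the uniformity of $K,\chi,\varepsilon$ across $\Lambda$. Since all constants depend only on the Pesin-block data, the fixed point varies continuously with the base point $x\in\Lambda$, and the invariance of the domain of the graph transform under small perturbations of $x$ gives continuity in $\mathcal C^{1+\alpha}$. Finally, applying the entire construction to $\Theta^{-1}$, whose stable subspaces are the unstable subspaces of $\Theta$, yields $W^u_{\mathrm{loc}}(x)$.

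The main obstacle I would expect is not the existence of a Lipschitz invariant graph, but rather the upgrade to $\mathcal C^{1+\alpha}$ with uniform Hölder constants over $\Lambda$: controlling the Hölder seminorm of the derivative under the graph transform requires the gap $\chi-C\varepsilon$ to dominate $\alpha$ times the worst expansion rate, and this is what dictates the precise choice of $\varepsilon_0(\chi)$. Once $\varepsilon_0$ is calibrated to close this estimate, everything else is standard; the delicate step is therefore purely the regularity bookkeeping, which is why Pesin's original argument and its modern reformulations devote most of their effort to this point.
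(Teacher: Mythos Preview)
Your sketch follows the classical Hadamard--Perron/graph transform route in Pesin adapted charts, which is exactly the approach of the reference \cite{Pesin} cited in the statement; the paper itself does not supply an independent proof but simply quotes the result from \cite{Pesin} (see also \cite[Theorem~2.2]{buzzi2025strongpositiverecurrenceexponential}). Your outline is correct and matches the standard literature argument, including the identification of the $\mathcal C^{1+\alpha}$ bootstrap as the step that governs the choice of $\varepsilon_0(\chi)$.
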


The above theorem allows us to define the \emph{global stable and unstable manifolds} of $x$ lying in a $(\chi,\e)$-Pesin block, where $(\chi,\e)$ satisfy the assumptions of Theorem \ref{thm:localstableunstable}, as
$$W^s(x) := \bigcup_{n\geq 0} \Theta^{-n}(W^s_{\mathrm{loc}}(\Theta^n(x))) \ \text{and }W^u(x) = \bigcup_{n\geq 0} \Theta^n(W^s_{\mathrm{loc}}(\Theta^{-n}(x))). $$
We recall that the choice of $\e>0$ in Theorem \ref{thm:localstableunstable} does not affect the definition of the global stable and unstable manifolds \cite[§8.2]{PesinBook} (see also \cite[Section 2]{buzzi2025strongpositiverecurrenceexponential}). 

\begin{definition}
    We define the \emph{recurrent non-uniformly hyperbolic set of $\Theta$} as
    $$\mathrm{NUH'}(\Theta) := \bigcup_{\chi>0} \bigcap_{\e>0} \bigcup_{\substack{(\chi,\e)\text{-Pesin}\\\text{ block }\Lambda}} \left\{ x\in E:\, x\ \text{is a limit of periodic points in }\Lambda\right\}. $$
Observe that
$$\mathrm{NUH'}(\Theta)\subset \bigcup_{\chi>0} \bigcap_{\e>0}\{x\in E:\, x\ \text{is contained in a }(\chi,\e)\text{-Pesin block}\}.$$
In particular, from Theorem \ref{thm:localstableunstable} each $x\in \mathrm{NUH}'(\Theta)$ admits global stable and unstable manifolds.
\end{definition}
\begin{theorem}[{\cite[Theorem 4.1]{Katok80}}] Let $\mu\in\mathcal M_e(\Theta)$, then $\mu(\mathrm{NUH}'(\Theta))=1$. \label{thm:katok}
\end{theorem}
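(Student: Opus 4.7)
The plan is to combine the Oseledets multiplicative ergodic theorem with Katok's closing lemma to show that $\mu$-almost every point is a limit of periodic orbits lying inside a Pesin block. Observe first that the conclusion can hold only when the Lyapunov exponents of $\mu$ are all nonzero (otherwise no point of $\supp\mu$ can belong to any $(\chi,\e)$-Pesin block with $\chi>0$), so we interpret the statement under the implicit hyperbolicity hypothesis, which is precisely the setting of Katok's original result in \cite{Katok80}. First, I invoke Oseledets' theorem to fix a full-measure set on which $\sigma(x)=\{\lambda_1,\dots,\lambda_k\}$ is constant with $\min_i|\lambda_i|>0$, and I set $\chi:=\tfrac{1}{2}\min_i|\lambda_i|$. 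Standard Pesin regularity arguments (see \cite{PesinBook}) then exhaust a full-$\mu$-measure set by an increasing family of $(\chi,\e)$-Pesin blocks $\Lambda_n^{\e}$ for each fixed $\e>0$. Intersecting over a countable sequence $\e_k\searrow 0$, one obtains a full-measure set $G\subset E$ of points that lie in a $(\chi,\e)$-Pesin block for every $\e>0$.

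Second, I apply Katok's closing lemma to each Pesin block $\Lambda_n^{\e}$: whenever a typical $y\in\Lambda_n^{\e}$ returns close to itself in the sense that $y,\Theta^{j}(y)\in\Lambda_n^{\e}$ with $\mathrm{dist}(y,\Theta^{j}(y))$ sufficiently small and $j$ sufficiently large, there exists a hyperbolic periodic point $p$ of period $j$ near $y$, and moreover, by continuity of the Pesin construction, $p$ itself belongs to a $(\chi,\e')$-Pesin block with $\e'$ only marginally worse than $\e$. Poincaré recurrence guarantees that $\mu$-a.e.\ $x\in\Lambda_n^{\e}$ admits arbitrarily long returns to arbitrarily small neighbourhoods within $\Lambda_n^{\e}$, so $x$ is a limit of such periodic points. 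Combining this with the exhaustion of $G$ above and letting $\e'\searrow 0$, every point of $G$ lies in $\mathrm{NUH}'(\Theta)$, whence $\mu(\mathrm{NUH}'(\Theta))=1$.

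The principal obstacle is Katok's closing lemma itself, whose proof requires constructing Pesin (Lyapunov) charts in which $\Theta$ appears uniformly hyperbolic up to a controlled nonlinear perturbation, and then producing a genuine periodic orbit via a graph-transform or Brouwer fixed-point argument applied to the Poincaré return map associated with the near-return of $x$. One must further verify that the hyperbolic estimates survive the construction, so the resulting periodic orbit lands in a Pesin block with slightly enlarged constants rather than degrading into a merely uniformly recurrent point. These steps are classical but technical, and in practice, as the authors do here, one simply invokes \cite[Theorem~4.1]{Katok80} as a black box, appealing to the detailed treatment in \cite{KatokBook}.
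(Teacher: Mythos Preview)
The paper does not supply its own proof; the statement is quoted as a citation of \cite[Theorem~4.1]{Katok80}. Your sketch is the standard argument behind Katok's theorem---Oseledets to fix $\chi$, Pesin's exhaustion of a full-measure set by $(\chi,\e)$-blocks, Poincar\'e recurrence to obtain close returns within a block, and the closing lemma to produce nearby hyperbolic periodic orbits in a slightly enlarged block---and is correct in outline.

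Your remark that the printed statement is missing a hyperbolicity hypothesis is well taken: for an ergodic $\mu$ with a zero Lyapunov exponent the conclusion can fail (e.g.\ $\Theta=\mathrm{id}$, where $\mathrm{NUH}'(\Theta)=\emptyset$). The paper only ever applies the theorem to hyperbolic measures (see the proof of Lemma~\ref{lem:count}), and \cite{Katok80} states it under that hypothesis, so this is a harmless slip in the restatement. One minor quibble: your parenthetical ``otherwise no point of $\supp\mu$ can belong to any $(\chi,\e)$-Pesin block'' is not quite the right mechanism, since membership in $\mathrm{NUH}'(\Theta)$ asks for $x$ to be a \emph{limit} of periodic points in a block rather than to lie in one itself; but your conclusion stands regardless.
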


\subsection{Borel homoclinic classes} Given two submanifolds $S_1,S_2$ in $E$ we denote $S_1 \pitchfork S_2$ to be the set of transverse intersection points, i.e.
$$S_1\pitchfork S_2 := \{x\in S_1\cap S_2: T_x S_1 \oplus T_x S_2 = T_x E\}. $$

\begin{definition}[\cite{buzzi2025strongpositiverecurrenceexponential}, Homoclinic relation] Two points $x,y\in \mathrm{NUH}'(\Theta)$ are \emph{homoclinically related} $(x\sim y)$ if $W^s(x)$ has a transverse intersection point with an iterate of $W^u(y)$, and $W^u(x)$ has a transverse intersection with an iterate of $W^s(y)$.
\end{definition}

\begin{proposition}[{\cite[Proposition 2.6]{buzzi2025strongpositiverecurrenceexponential}}]
 The homoclinic relation $\sim$ is an equivalence relation on $\mathrm{NUH}'(\Theta)$.  
\end{proposition}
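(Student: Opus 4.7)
The plan is to verify the three properties of an equivalence relation in turn. Reflexivity follows immediately from the hyperbolic splitting: for any $x\in\mathrm{NUH}'(\Theta)$, the decomposition $T_xE=E^s(x)\oplus E^u(x)$ underlying Theorem~\ref{thm:localstableunstable} gives $x\in W^s(x)\pitchfork W^u(x)$, so $x\sim x$ at the $0$-th iterate. Symmetry is also immediate: applying $\Theta^{-a}$ to the transverse intersection $W^s(x)\pitchfork\Theta^a(W^u(y))$ produces $W^u(y)\pitchfork\Theta^{-a}(W^s(x))$, which is the unstable side of $y\sim x$, and analogously on the other side.

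The content is transitivity. Given $x\sim y$ and $y\sim z$, fix transverse intersection points $q\in W^u(x)\cap W^s(\Theta^a(y))$ and $\tilde p\in W^u(y)\cap W^s(\Theta^b(z))$. After relabelling $y$ by a suitable orbit translate we may assume $q\in W^u(x)\cap W^s(y)$; then since $\tilde p\in W^u(y)=\bigcup_{k\geq 0}\Theta^{k}(W^u_{\mathrm{loc}}(\Theta^{-k}(y)))$, after pulling back by a further power of $\Theta$ we may also assume $\tilde p\in W^u_{\mathrm{loc}}(y)\cap W^s(\Theta^{b'}(z))$ transversely. I now invoke the Pesin inclination ($\lambda$-) lemma at $y$: a small disc $D\subset W^u(x)$ transverse to $W^s(y)$ at $q$ has forward iterates $\Theta^m(D)\subset W^u(\Theta^m(x))$ which are $C^1$-close to $\Theta^m(B)\subset W^u(\Theta^m(y))$, where $B\subset W^u_{\mathrm{loc}}(y)$ is a fixed compact disc containing $\tilde p$. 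Because $\Theta^m(B)$ inherits a transverse intersection with $W^s(\Theta^{m+b'}(z))$ at $\Theta^m(\tilde p)$, and transverse intersections are $C^1$-stable, $\Theta^m(D)$ has its own transverse intersection with $W^s(\Theta^{m+b'}(z))$ for $m$ large. Pulling back by $\Theta^{-m}$ yields $W^u(x)\pitchfork W^s(\Theta^{b'}(z))$. The symmetric argument exchanging the roles of stable and unstable manifolds produces the companion intersection of $W^s(x)$ with an iterate of $W^u(z)$, completing $x\sim z$.

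The main obstacle is the Pesin $\lambda$-lemma itself. In the uniformly hyperbolic setting it is classical, but here the hyperbolicity constants, the local sizes of $W^s_{\mathrm{loc}}$ and $W^u_{\mathrm{loc}}$, and the rate of $C^1$-convergence of $\Theta^m(D)$ to $\Theta^m(B)$ all depend on the Pesin block containing $y$ and degrade only sub-exponentially along the orbit. The hypothesis that $x$, $y$, $z$ lie in $\mathrm{NUH}'(\Theta)$—that is, they are limits of periodic points inside a common Pesin block—is precisely the recurrence needed to keep these estimates uniform enough along orbits for a Lyapunov-chart version of the inclination lemma to operate. Once that tool is in hand, the chain-of-intersections argument above is essentially formal.
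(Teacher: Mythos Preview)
The paper does not supply its own proof of this proposition; it is quoted verbatim from \cite[Proposition~2.6]{buzzi2025strongpositiverecurrenceexponential}, so there is nothing in the present paper to compare your argument against. Your outline is the standard one and coincides with what the cited reference does: reflexivity from the splitting $T_xE=E^s(x)\oplus E^u(x)$ at $x$, symmetry by applying $\Theta^{\pm a}$ to a transverse intersection, and transitivity via a non-uniform (Pesin) inclination lemma applied at the intermediate point $y$.

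Two small remarks. First, your description of $\mathrm{NUH}'(\Theta)$ as ``limits of periodic points inside a common Pesin block'' is slightly off: by the definition in the paper, each point lies, for \emph{every} $\varepsilon>0$, in \emph{some} $(\chi,\varepsilon)$-Pesin block approximated by periodic points, but there is no claim that $x,y,z$ share a block. This does not damage your argument, since the inclination lemma only needs good Pesin estimates along the orbit of the single pivot point $y$. Second, the reductions you perform (replacing $y$ by an orbit translate, pulling $\tilde p$ back into $W^u_{\mathrm{loc}}(y)$) are exactly the normalisations one makes before invoking the $\lambda$-lemma, and your identification of that lemma as the only non-formal ingredient is correct.
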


 \begin{definition}[\cite{buzzi2025strongpositiverecurrenceexponential}] \emph{Borel homoclinic classes} are equivalence classes of $\sim$ in $\mathrm{NUH}'(\Theta)$.
 \end{definition}

The proposition below states that the homoclinic classes characterise all the hyperbolic measures.
 \begin{proposition}[{\cite[Proposition 2.8]{buzzi2025strongpositiverecurrenceexponential}}]\label{prop:homoclinicclasses}
  The following hold:
  \begin{enumerate}
      \item[(i)] The set of Borel homoclinic classes is a finite or countable partition of $\mathrm{NUH}'(\Theta)$ into invariant Borel sets;
      \item[(ii)] each class contains a dense set of hyperbolic periodic points;
      \item[(iii)] any hyperbolic ergodic measure $\mu$ is carried by a Borel homoclinic class $\mathcal{X}$; and
      \item[(iv)] any invariant measure carried by a Borel homoclinic class is hyperbolic.
  \end{enumerate}
 \end{proposition}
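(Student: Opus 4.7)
The plan is to first establish the set-theoretic and invariance content of (i) and (iv), which follow from the definitions and Oseledets' theorem; deduce (iii) from Theorem~\ref{thm:katok} and (i); and finally tackle (ii), which is the main technical step and which also completes (i) by providing countability of the partition.

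For the first half of (i), we already know $\sim$ is an equivalence relation on $\mathrm{NUH}'(\Theta)$, so its classes form a partition. $\Theta$-invariance is immediate from $\Theta(W^{s/u}(x))=W^{s/u}(\Theta x)$ and the preservation of transverse intersections by diffeomorphisms. Borel measurability reduces to Borel measurability of $\sim$: exhausting $\mathrm{NUH}'(\Theta)$ by a countable family of Pesin blocks indexed by rational parameters and invoking the $\mathcal{C}^{1+\alpha}$-continuous dependence of $W^{s/u}_{\mathrm{loc}}$ on the base point from Theorem~\ref{thm:localstableunstable}, the condition ``$W^s(x)\pitchfork \Theta^{k}W^u(y)\neq\emptyset$ for some $k\in\mathbb{Z}$'' becomes a Borel condition on $(x,y)$.

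For (iv), let $\mathcal{X}$ be a Borel homoclinic class and $\mu$ a $\Theta$-invariant measure with $\mu(\mathcal{X})=1$. Since $\mathcal{X}\subset\mathrm{NUH}'(\Theta)$, every $x\in\mathcal{X}$ lies in some $(\chi(x),\e)$-Pesin block for all $\e>0$, so the Oseledets spectrum at $x$ contains both strictly positive and strictly negative exponents separated from $0$. Passing to an ergodic decomposition yields a constant spectrum on each component satisfying Definition~\ref{def:LE}(i)--(ii), so $\mu$ is hyperbolic. Part (iii) then follows at once: by Theorem~\ref{thm:katok} a hyperbolic ergodic measure $\mu$ satisfies $\mu(\mathrm{NUH}'(\Theta))=1$, and combined with the $\Theta$-invariance of each class just established, ergodicity forces $\mu$ to be concentrated on exactly one class $\mathcal{X}$.

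The remaining and hardest step is (ii), which also supplies the missing countability in (i). Given $x\in\mathcal{X}$, the definition of $\mathrm{NUH}'(\Theta)$ provides $\chi>0$ and, for each small $\e>0$, a $(\chi,\e)$-Pesin block $\Lambda\ni x$ together with a sequence of periodic points $p_n\in\Lambda$ with $p_n\to x$; membership in $\Lambda$ makes each $p_n$ hyperbolic with uniform constants. By the $\mathcal{C}^{1+\alpha}$ continuity of $W^{s/u}_{\mathrm{loc}}$ on $\Lambda$ from Theorem~\ref{thm:localstableunstable}, the tangent spaces $T_{p_n}W^{s/u}_{\mathrm{loc}}(p_n)$ converge to the complementary subspaces $T_xW^{s/u}_{\mathrm{loc}}(x)$, so for $n$ large the implicit function theorem gives a transverse intersection point of $W^s_{\mathrm{loc}}(x)$ with $W^u_{\mathrm{loc}}(p_n)$, and symmetrically of $W^u_{\mathrm{loc}}(x)$ with $W^s_{\mathrm{loc}}(p_n)$; this is exactly $p_n\sim x$, yielding density of hyperbolic periodic points in $\mathcal{X}$. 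Since hyperbolic periodic points are isolated as fixed points of some iterate of $\Theta$ and hence form an at most countable subset of $E$, and each class contains at least one of them, the partition is countable. The main obstacle I anticipate is the uniform control of the sizes of the local manifolds $W^{s/u}_{\mathrm{loc}}(p_n)$ so that the transverse intersections really lie inside the local pieces; this is precisely what the uniform constants in the Pesin block definition and the uniformity in Theorem~\ref{thm:localstableunstable} are designed to provide.
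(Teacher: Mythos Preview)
The paper does not give its own proof of this proposition; it is quoted verbatim from \cite[Proposition~2.8]{buzzi2025strongpositiverecurrenceexponential} and used as a black box. There is therefore no proof in the paper to compare your proposal against.

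Your outline is the standard argument and is correct in substance. One small logical wrinkle in your ordering: you deduce (iii) before establishing the countability part of (i), arguing that ``ergodicity forces $\mu$ to be concentrated on exactly one class''. Knowing only that each class is Borel and $\Theta$-invariant gives $\mu(\mathcal X)\in\{0,1\}$ for every class $\mathcal X$; to conclude that \emph{some} class has measure $1$ you still need either countability (so that the measures of the classes sum to $\mu(\mathrm{NUH}'(\Theta))=1$) or a direct argument that $\mu$-a.e.\ pair of points are homoclinically related. Since you obtain countability in (ii) via the periodic points, the cleanest fix is simply to reorder: prove (ii) and the countability clause of (i) first, and then (iii) follows from $\sigma$-additivity and ergodicity exactly as you wrote. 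Alternatively, Katok's closing/shadowing lemma shows directly that for a hyperbolic ergodic $\mu$ almost every pair of points lies in the same Pesin block infinitely often and hence is homoclinically related, which yields (iii) independently of countability.
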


\begin{remark}
 Note that any Borel homoclinic class that is not a singleton supports uncountably many invariant ergodic measures. If $\Theta$ admits a non-atomic hyperbolic invariant measure (i.e., not supported on a periodic orbit), then $\Theta$ admits uncountably many hyperbolic ergodic measures. Indeed, every non-trivial Borel homoclinic class contains at least one horseshoe, and each horseshoe carries uncountably many ergodic measures since it is conjugated to a full shift. By Proposition \ref{prop:homoclinicclasses} (iv), each of these measures is hyperbolic.
\end{remark}

We conclude this section by introducing the notions of rectangle with local product structure and the Smale bracket, which will be used throughout.
\begin{definition}[Rectangle and Smale bracket]\label{def:rec}
Let $\Theta:E\to E$ be a dynamical system on a smooth manifold $E$. A set $R\subset E$ is a \emph{rectangle with local product structure} if there exist families $\Gamma^s$, $\Gamma^u$ of $\mathcal C^1$ discs embedded in $E$ such that:
\begin{itemize}
\item $R=\bigcup_{\gamma^s\in \Gamma^s}\gamma^s\cap \bigcup_{\gamma^u \in \Gamma^u}\gamma^u$;
\item given $\gamma,\gamma'\in \Gamma^{s}$ (or $\Gamma^u$), then either $\gamma = \gamma'$ or $\gamma \cap \gamma' = \emptyset.$
\item each $\gamma^s \in \Gamma^s$ is a piece of stable manifold and for each $x\in R$, there exists a unique $\gamma^s_x\in \Gamma^s$ such that $x\in \gamma^s_x$;
\item each $\gamma^u \in \Gamma^u$ is a piece of unstable manifold and for each $y\in R$, there exists a unique $\gamma^u_y\in \Gamma^u$ such that $y\in \gamma^u_y$
\item for every $\gamma^s\in\Gamma^s$ and $\gamma^u\in\Gamma^u$ one has $\#(\gamma^s\cap\gamma^u)=1$.
\end{itemize}

Given a rectangle $R$ and $x,y\in R$, define the \emph{Smale bracket}
$$
[x,y]_\Theta:=\gamma^s_x\cap\gamma^u_y,
$$
which is a single point of $R$ by the last item. We also define the sets
$$W^{s}(x,R) := \gamma^s_x\cap R\ \text{and } W^{u}(x,R) := \gamma^u_x\cap R.$$
\end{definition}

\begin{figure}[h!]
    \centering
    \includegraphics[width=0.4\linewidth]{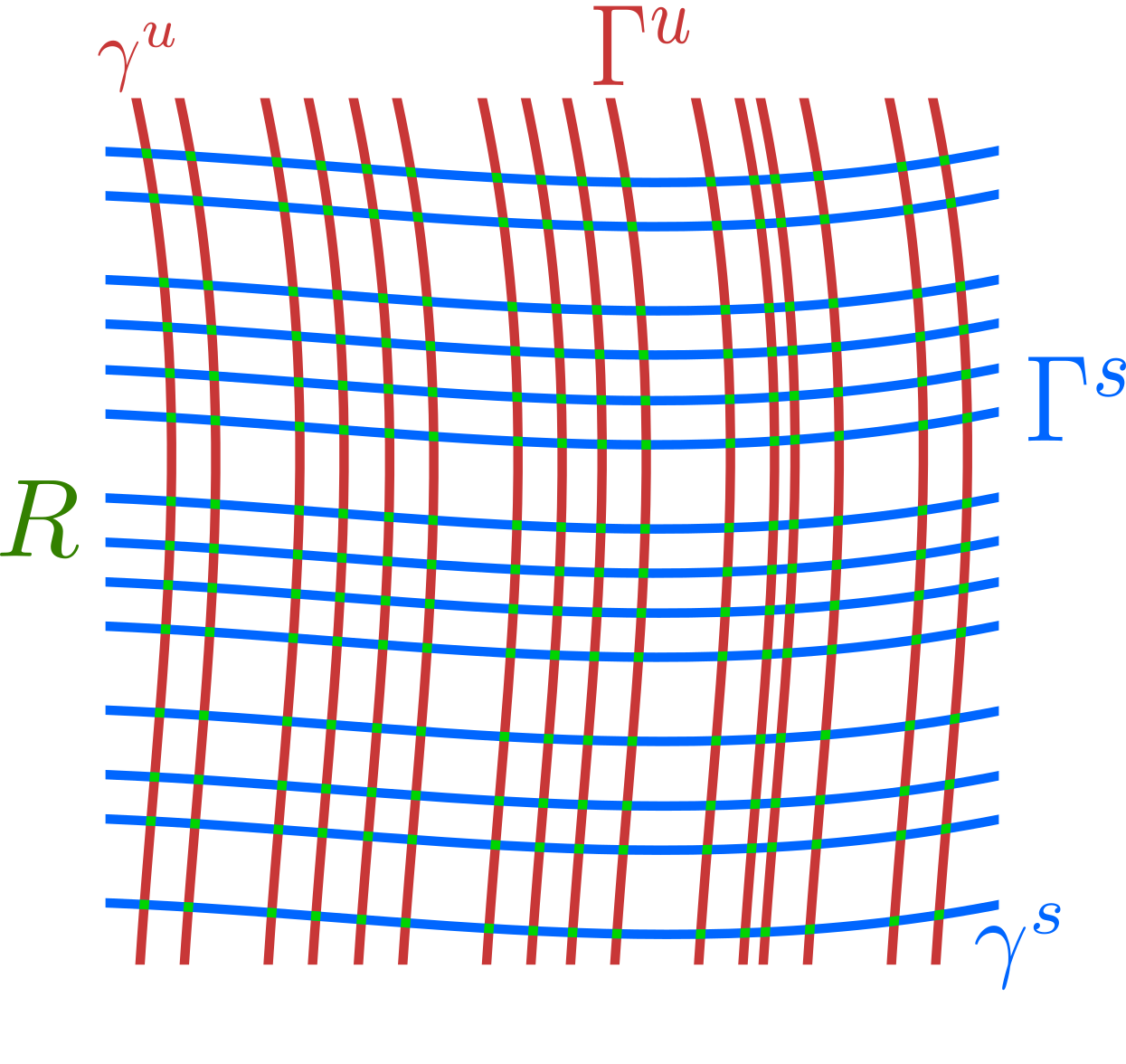}
    \caption{Illustration of a rectangle $R$ (shown as green dots) as the intersection of segments of stable and unstable manifolds (blue and red curves, respectively).}
    \label{fig:placeholder1}
\end{figure}

\section{Topological Markov shifts}
\label{sec:3}

This section proves Theorem \ref{thm:1block}, which enables us to analyse the skew product $\Theta$ via a countable-state topological Markov shift and a factor map that preserves the base dynamics.

\begin{definition}\label{def:tms}
Let $S$ be a countable alphabet and a directed graph $\mathscr{G} = (\mathscr V,\mathscr E)$, such that the set of vertices $ \mathscr V=S$. The \emph{topological Markov shift associated to $\mathscr G$} is the discrete-time topological dynamical system $\sigma :\Sigma\to\Sigma$ where 
$$\Sigma = \Sigma(\mathscr{G}) = \{v=(v_i)_{i\in\mathbb Z}\in S^{\mathbb Z};\ \text{if }v_i\to v_{i+1}\in \mathscr E\ \text{for every }i\in\mathbb Z\},$$
equipped with the metric $d(v,u) = \{\exp(-\min\{|n|; v_n\neq  u_n\})$, and $\sigma:\Sigma \to \Sigma$ is the left shift map $\sigma( (v_i)_{i\in\mathbb Z}) = (v_{i+1})_{i\in\mathbb Z}$. Given a topological Markov shift $\Sigma$ associated to a graph $\mathscr{G}=(\mathscr V,\mathscr E)$ we define the set
\begin{equation}
    \label{Sigmahash}
\Sigma^\# := \left\{(v_i)_{i\in\mathbb Z}\in\Sigma;\, \exists u',v'\in \mathscr V\ \text{s.t. }\,\#\{i\in\mathbb Z_+; v_i = v'\} =\#\{i\in\mathbb Z_{-}; v_i = u'\}=\infty \right\}.
\end{equation}

We say that $\Sigma$ is irreducible if for every two symbols $v_i,v_j \in \mathscr V$, there exists a path in the graph $\mathscr G$ which connects $v_i$ to $v_j$.

\end{definition}

In the following, we recall the definition of a Markov partition.

\begin{definition}[Markov partition]\label{def:BowenPartition}
Let \(\theta:\Omega\to \Omega\) be a $\mathcal C^{1+\alpha}$ dynamical system. A finite family \(\mathscr P=\{P_1,\dots,P_k\}\) of  rectangles with local product structure (see Definition \ref{def:rec}) is a \emph{Markov partition} if:

\begin{itemize}
\item[(1)] for each $i\in\{1,\ldots,k\}$
$$P_i = \overline{\operatorname{int} (P_i)}\ \text{and}\ \operatorname{int} (P_i) \cap  \operatorname{int}P_j = \varnothing$$ if $i\neq j$, where the interior and closure are being taken in the induced topology of $NW(\theta)$, where $NW(\theta)$ denotes the non-wandering set of $\theta:\Omega\to \Omega$;    
  \item[(2)] 
  whenever $\omega\in \operatorname{int}P_i$ and $\theta(\omega)\in \operatorname{int} P_j$,
  $$ \theta\big(W^s(\omega,P_i)\big)\subset W^s(\theta(\omega),P_j)\text{ and }
    \theta\big(W^u(\omega,P_i)\big)\supset W^u(\theta (\omega),P_j).$$

  \item[(3)] define the directed graph \(\mathscr G_b=(\mathscr V_b,\mathscr E_b)\) with
  \(\mathscr V_b=\mathscr P\) and edges
  \[
    \mathscr E_b=\big\{P_i\to P_j;\ \operatorname{int}P_i\cap \theta^{-1}(\operatorname{int}P_j)\neq\varnothing\,\big\},
  \]
  and let \(\Sigma_b=\Sigma_b(\mathscr G_b)\) be the associated subshift of finite type.
  The coding map
  \begin{align}
      \pi_b:(P_{i_n})_{n\in\mathbb Z}\ \mapsto\ \bigcap_{n\in\mathbb Z}\theta^{-n}\big(P_{i_n}\big) \label{def:pib} 
  \end{align}
  is well defined, Hölder continuous, and satisfies the semiconjugacy
  \(\pi_b\circ \sigma=\theta\circ \pi_b\).

  \item[(4)] the map $\pi_b$ is surjective onto $NW(\theta)$ and there exists \(k_{\mathscr P}\in\mathbb N\)
  such that \(\#\,\pi_b^{-1}(\omega)\le k_{\mathscr P}\) for every \(\omega\in NW(\theta)\).
\end{itemize}

\end{definition}

\begin{theorem}[{\cite{Bowen70}, \cite[Appendix~III]{ParryPollicott1990}}]
Every $\mathcal C^{1+\alpha}$ Axiom~A diffeomorphism $\theta:\Omega\to\Omega$ admits a finite Markov partition; in particular, so does every Anosov diffeomorphism.
\end{theorem}

\begin{definition}\label{def:ba}
Let $\theta:\Omega\to\Omega$ be an Anosov diffeomorphism and let $\mathbb P$ be  $\theta$-invariant ergodic probability measure on $\Omega$.
Given a Markov partition $\mathscr P=\{P_1,\ldots,P_\kappa\}$ for $\theta$ we denote
$$
\partial \mathscr P :=\bigcup_{P\in\mathscr P}\partial P.
$$

It is convenient to decompose the boundary into its stable and unstable parts:
$$\partial^s\mathscr P:=\bigcup_{P\in\mathscr P}\partial^s P,\ 
\partial^u\mathscr P:=\bigcup_{P\in\mathscr P}\partial^u P,$$
where for each \(P\in\mathscr P\),
\begin{itemize}
    \item $\partial^s P:=\{x\in P: x\notin \operatorname{int}_{W^u(x)}(W^u(x,P))\}$; and
    \item $\partial^u P:=\{x\in P: x\notin \operatorname{int}_{W^s(x)}(W^s(x,P))\},$
\end{itemize}
where interiors taken in the relative topologies of $W^{s}_{\mathrm{loc}}(x)$ and $W^{u}_{\mathrm{loc}}(x)$, respectively (see \cite[Lemma 3.11]{BowenBook}). Observe that $\mathbb P$ is ergodic, we have that $\mathbb P(\partial \mathscr P) \in \{0,1\}$  (see \cite[Proposition 3.5]{BowenBook}).

\end{definition}

The next statement, taken from \cite[Theorem 10.5 and Section 10]{buzzi2025strongpositiverecurrenceexponential}, is a higher-dimensional generalisation of Sarig’s celebrated theorem \cite[Theorem 1.3]{Sarig2013} (see also \cite[Theorem 0.1]{BenOvadia2018}). We mention that the original result presented in \cite{buzzi2025strongpositiverecurrenceexponential} is more general; however, the presented version is enough for our purposes.

\begin{theorem}[{\cite[Theorem 10.5 and Section 10]{buzzi2025strongpositiverecurrenceexponential}, see also \cite[Proposition 11.5]{Sarig2013} and \cite[\S 6]{BenOvadia2018}}]\label{thm:BCS}
Let \(\Theta:E\to E\) be a \(\mathcal C^{1+\alpha}\) diffeomorphism on a compact smooth manifold \(E\). Let \(\mathcal X\) be either a Borel homoclinic class \(H\) or \(E\).
Then, for each \(\chi>0\), there exists a countable collection of sets $\mathscr R=\{R_i\}_{i\in I}$, which we call a $\chi$-Sarig partition, with the following properties:
\begin{itemize}
    \item  Each \(R_i\) is a  rectangle with local product structure  (see Definition \ref{def:rec});
    \item  whenever $x\in R_i$ and $\Theta(x)\in R_j$, then
$$\Theta\big(W^s(x,R_i)\big)\subset W^s(\Theta(x),R_j)
\ \text{and}\ 
\Theta\big(W^u(x,R_i)\big)\supset W^u(\Theta(x),R_j).$$
\end{itemize}

Let the topological Markov shift \(\Sigma=\Sigma(\mathscr G)\) be induced by the directed graph \(\mathscr G=(\mathscr V,\mathscr E)\) with \(\mathscr V:=\mathscr R\) and
\[
\mathscr E = \big\{\,R_i\to R_j;\ R_i\cap \Theta^{-1}(R_j)\neq\varnothing\,\big\}.
\]
Then:
\begin{enumerate}
\item[(1)] The map
\[
\pi:\ (R_i)_{i\in\mathbb Z}\in\Sigma\ \mapsto\ 
\bigcap_{n\in\mathbb N}\,\overline{\bigcap_{-n\le i\le n}\Theta^{-i}(R_i)}\in\mathcal X
\]
is well-defined and H\"older continuous. Moreover, \(\Theta\circ\pi=\pi\circ\sigma\) and \(\pi:\Sigma^\#\to\mathcal X\) is finite-to-one (see Definition~\ref{def:tms}).

\item[(2)] \(\Sigma\) is a locally compact countable-state Markov shift and \(\mu(\pi(\Sigma))=\mu(\pi(\Sigma^\#))=1\). If \(\mathcal X\) is a Borel homoclinic class, \(\Sigma\) can be chosen irreducible.

\item[(3)] Every ergodic \(\sigma\)-invariant probability measure \(\widehat\mu\) on \(\Sigma\) projects to an ergodic \(\Theta\)-invariant measure \(\mu:=\pi_*\widehat\mu\) with \(\mu(\mathcal X)=1\) and \(h_\mu(\Theta)=h_{\widehat\mu}(\sigma)\).

\item[(4)] For every \(\chi\)-hyperbolic \(\Theta\)-invariant probability measure \(\mu\) on \(E\) there exists a \(\sigma\)-invariant probability measure \(\widehat\mu\) on \(\Sigma\) such that \(\pi_*\widehat\mu=\mu\).
\end{enumerate}
\end{theorem}

We now state the main technical result on which our analysis is based. It refines Theorem~\ref{thm:BCS} in the setting of skew products.

\begin{theorem}\label{thm:1block}

  Let $\Theta:\Omega\times M \to \Omega\times M$ be a $\mathcal C^{1+\alpha}$ diffeomorphism such that
\begin{enumerate}
    \item[(a)]  $\Theta$ is a skew product i.e. $\Theta(\omega,x) = (\theta(\omega),T_\omega(x))$;
    \item[(b)] $\theta:\Omega\to\Omega$ is an Anosov map and $\mathbb P$ a $\theta$-invariant ergodic probability measure with full support.
\end{enumerate}
Let $\mathcal{X}\subset \Omega \times M$ be equal to a Borel homoclinic class $H$ or $\Omega\times M$. Then, for every $\chi>0$ there exists a locally compact topological Markov shift $\Sigma_s$, a subshift of finite type $\Sigma_b$ and Hölder continuous maps $\pi_s:\Sigma_s\to \mathcal{X}$ and $\pi_b:\Sigma_b\to \Omega$ such that
\begin{enumerate}
    \item $\pi_{s} \circ \sigma_s = \Theta \circ \pi_s$ and $\pi_b \circ \sigma_b = \theta\circ \pi_b$, where $\sigma_s$ is the left shift map on $\Sigma_s$ and $\sigma_b$ is the left shift map on $\Sigma_b$;
    \item  for every $\chi$-hyperbolic $\Theta$-invariant probability $\mu$ such that $\mu(\mathcal{X})=1$ with $(\proj_\Omega)_*\mu = \mathbb P$,
  one has ${\mu\big(\pi_s[\Sigma_s]\big)=\mu\big(\pi_s[\Sigma_s^{\#}]\big)=1}$;
    \item  the maps $\pi_s\big|_{\Sigma_s^{\#}}:\Sigma_s^{\#}\to\mathcal{X}$ and $\pi_b:\Sigma_b\to\Omega$ are finite-to-one;

    \item  there exists a $1$-block map  $\mathrm{proj}_{\Sigma_b}:\Sigma_s\to \Sigma_b$ such that $$\pi_b \circ \mathrm{proj}_{\Sigma_b} = \mathrm{proj}_\Omega\circ \pi_s.$$ Moreover, $\widehat{\mathbb P}\left[\mathrm{proj}_{\Sigma_b}(\Sigma_s)\right] = 1$ where $\widehat{\mathbb P}$ is the unique measure in $\mathcal M_e(\Sigma_b)$ such that $(\pi_b)_*\widehat{\mathbb P} = \mathbb P$.
    \item if \(\mathcal X\) is a Borel homoclinic class and there exists a $\chi$-hyperbolic \(\Theta\)-invariant ergodic probability measure \(\mu\) with \(\operatorname{supp}\mu\subset\mathcal X\) and \((\proj_\Omega)_*\mu=\mathbb P\), then the coding \((\Sigma_s,\sigma_s)\) can be chosen to be an irreducible topological Markov shift.

\end{enumerate}

Diagram \ref{diagram1} summarises all of the sets and projections used in the following proof.

\begin{diagram}[!ht]
\centering
\begin{tikzcd}
    \mathcal{X}\subset \Omega\times M \arrow[dddd, bend right=90, "{\rm proj}_\Omega"]\arrow[r, "\Theta"] & \mathcal{X}\subset \Omega\times M \arrow[dddd, bend left=90, "{\rm proj}_\Omega"]  \\
    \Sigma \arrow[r, "\sigma"] \arrow[u, "\pi"] & \Sigma \arrow[u, "\pi"]\\
    \Sigma_s \arrow[uu, bend left=70, "\pi_s"] \arrow[d, "{\rm proj}_{\Sigma_b}"] \arrow[r, "\sigma_s=\sigma_b\times\sigma"] \arrow[u, "{\rm proj}_\Sigma"]  &  \arrow[u, "{\rm proj}_\Sigma"]  \Sigma_s
    \arrow[uu, bend right=70, "\pi_s"] \arrow[d, "{\rm proj}_{\Sigma_b}"] \\
     \Sigma_b \arrow[r, "\sigma_b"] \arrow[d, "\pi_b"]&  \Sigma_b \arrow[d, "\pi_b"] \\
    \Omega \arrow[r, "\theta"] & \Omega
\end{tikzcd}
\caption{Commutative diagram of the maps $\Theta, \pi, \pi_s, \pi_b, \mathrm{proj}_\Sigma, \mathrm{proj}_{\Sigma_b}$, and $\mathrm{proj}_{\Omega}$. 
We note that $\Sigma_s\subsetneq \Sigma_b \times \Sigma,$ in fact,
$\Sigma_s \subset \{\un{(P,R)}; \un{P}\in \Sigma_b,\, \un{R}\in \Sigma\ \text{and }\pi_b(\un{P}) = \mathrm{proj}_\Omega \circ \pi(\un{R})\}$.}
\label{diagram1}
\end{diagram}

\end{theorem}

\begin{proof}[Proof of Theorem \ref{thm:1block}]

Let \(\chi>0\). Applying Theorem~\ref{thm:BCS} to the skew product $\Theta:\Omega\times M\to\Omega\times M$ and $\mathcal{X}$ be either equal to  Borel homoclinic class $H$ or $\Omega\times M$, we obtain a topological Markov shift $(\Sigma,\sigma)$ with the properties asserted therein, and let $\mathscr{R}$ be its $\chi$-Sarig partition. We assume $(\Sigma,\sigma)$ to be irreducible if $\mathcal{X}$ is a Borel homoclinic class. Since $\theta:\Omega\to\Omega$ is a transitive Anosov diffeomorphism and $\mathbb P$ is a measure with full support, there exists a subshift of finite type $(\Sigma_b,\sigma_b)$ associated with a Markov partition $\mathscr P$ as in Definition \ref{def:BowenPartition}. Since $\supp \mathbb P =\Omega$ we have that  $\mathbb P(\partial \mathscr P)=0$ (see Definition \ref{def:ba}).
Therefore 
\begin{align}
\mathbb P\left[\omega\in \Omega;\, \theta^i(\omega)\in \partial \mathscr P \ \text{for some }i\in\mathbb Z\right] &= \mathbb P\left[\bigcup_{i\in\mathbb Z} \theta^{-i}(\partial \mathscr P)\right]\nonumber\\
&\leq \sum_{i\in\mathbb Z} \mathbb P\left[\theta^{-i}(\partial \mathscr P)\right] = 0.\label{eq:meas0}
\end{align}
Let $\pi_b$ be as in \eqref{def:pib} and recall that
$$k_{\mathscr P} := \sup_{\omega\in \Omega} \# \pi_b^{-1}(\omega) <\infty. $$

Let $\mathscr G_s := (\mathscr V_s,\mathscr E_s)$ be the graph with vertices $$\mathscr V_s := \{(P,R);\, R\in\mathscr R, P\in\mathscr P\ \text{and }R\cap (\operatorname{int}P\times M) \neq \varnothing\},$$ and edges
$$\mathscr{E}_s = \left\{
  (P,R)\to (P',R') \;\middle|\;
  \begin{aligned}
  & (P,R),(P',R')\in \mathscr V_s,\ \text{and }\\
  &\Theta^{-1}[R'\cap (\operatorname{int}P'\times M)]\cap R\cap (\operatorname{int}P\times M)\neq \varnothing
  \end{aligned}
\right\}.$$
Define $(\Sigma_s,\sigma_s) := (\Sigma_s(\mathscr G_s),\sigma_s)$. Observe that for each $(P,R)\in \mathscr V_s$
$$\#\{(P',R') \in\mathscr V_s; (P,R)\to (P',R')\in \mathscr E_s \}\leq \#\{R' \in\mathscr V_s; R\to R'\in \mathscr E \} \cdot \# \mathscr P <\infty,$$
and
$$\#\{(P',R') \in\mathscr V_s; (P',R')\to (P,R)\in \mathscr E_s \}\leq \#\{R' \in\mathscr V_s; R'\to R\in \mathscr E \} \cdot \# \mathscr P <\infty.$$
Therefore, $\Sigma_s$ is a locally compact topological Markov shift.

Given $(P_i,R_i)_{i\in\mathbb Z}\in \Sigma_s$ we define
$$\pi_s ( (P_i,R_i)_{i\in\mathbb Z}) :=  \bigcap_{n\in \mathbb N} \overline{\bigcap_{-n\leq i\leq n}\Theta^{-i}[R_i \cap (\operatorname{int}P_i\times M)]}.$$

We divide the remainder of the proof into five steps.

\begin{step}[1]
We show that if given $m,n\in \mathbb Z$ with $n>m$ and a path 
$$(P_{m},R_{m})\to (P_{m+1},R_{m+1}) \to \ldots \to (P_n,R_n) $$
in $\mathscr G_s$ then
\begin{equation}
    \label{nonemptyexpr}
\bigcap_{i=m}^{n} \Theta^{-i}[R_{i} \cap (\operatorname{int}P_i\times M)] \neq \varnothing.
\end{equation}

In particular, we obtain that for each $(P_i,R_i)_{i\in\mathbb Z}\in \Sigma_s$
\begin{align*}
  \pi_s: \Sigma_s&\to \Omega\times M\\
  (P_{i},R_{i})_{i\in\mathbb Z}&\mapsto  \bigcap_{n\in \mathbb N} \overline{\bigcap_{-n\leq i\leq n}\Theta^{-i}\left[R_i \cap ( \operatorname{int}P_i\times M)\right] }
\end{align*}
is well defined and $\pi_s( (P_i,R_i)_{i\in\mathbb Z}) = \pi( (R_i)_{i\in\mathbb Z}).$
 \end{step}

We follow \cite[Lemma~12.1]{Sarig2013} and argue by induction on the length of the path.
Fix
$$
(P_m,R_m)\ \to\ (P_{m+1},R_{m+1})\ \to\ \cdots\ \to\ (P_n,R_n)\ \to\ (P_{n+1},R_{n+1}) .
$$
If \(n=m+1\) (i.e. a single edge from $m$ to \(m+1\)), the claim regarding the nonemptiness of the intersection in \eqref{nonemptyexpr} is immediate from the definition of the edge
\((P_m,R_m)\to(P_{m+1},R_{m+1})\).

Assume the statement holds up to \(n\), and prove it for \(n+1\).
By the induction hypothesis, there exists \((\omega_0,x_0)\) such that, writing
\((\omega_i,x_i):=\Theta^i(\omega_0,x_0)\),
$$
(\omega_i,x_i)\in R_i\cap\big(\operatorname{int}P_i\times M\big)\ \text{for all }i=m,\dots,n.
$$
Since \((P_n,R_n)\to(P_{n+1},R_{n+1})\), choose
$$
(\upsilon_n,y_n)\in\big(R_n\cap(\operatorname{int}P_n\times M)\big) \cap \Theta^{-1}\big(R_{n+1}\cap(\operatorname{int}P_{n+1}\times M)\big).
$$
Set
$
(\zeta_n,z_n):=[\,(\upsilon_n,y_n),(\omega_n,x_n)\,]_{\Theta}\in R_n,
$
which is well defined because \(R_n\) is a rectangle with local product structure. 
Since \(\Theta\) is a skew product and $(\upsilon_n,y_n),(\omega_n,x_n)\in R_n\cap(\operatorname{int} P_n \times M)$, the bracket decouples in base and fibre:
$$
[\,(\upsilon_n,y_n),(\omega_n,x_n)\,]_{\Theta}
=\big(\,[\upsilon_n,\omega_n]_{\theta},\, z_n\big),
$$
and in particular \([\zeta_n,\omega_n]_{\theta}\in \operatorname{int}P_n\).

Finally, define $(\zeta_0,z_0):=\Theta^{-n}(\zeta_n,z_n).$
 In the following, we show that
$$
(\zeta_0,z_0) \in \bigcap_{i=m}^{n+1}\Theta^{-i}\left[R_i\cap\big(\operatorname{int}P_i\times M\big)\right],
$$
which is sufficient to establish \eqref{nonemptyexpr}.

Define $(\zeta_i,z_i):= \Theta^i(\zeta_0,z_0)$ for each $i\in\{m,\ldots, n\}$. For clarity in what follows, we denote by $W^s_\Theta$ (resp.\ $W^u_\Theta$) the local stable (resp.\ unstable) manifold of $\Theta$, and by $W^s_\theta$ (resp.\ $W^u_\theta$) the local stable (resp.\ unstable) manifold of $\theta$.
  Using the Markov properties of the partitions $\mathscr R$ and $\mathscr P$, we obtain 
\begin{itemize}
    \item $\Theta^{n+1}(\zeta_0,z_0)=\Theta(\zeta_n,z_n)\in R_{n+1}\cap (\operatorname{int}P_{n+1}\times M)$, because
    $$
    \Theta[W^s_\Theta((\zeta_n,z_n), R_n)] \subset W^s_\Theta(\Theta(\zeta_n,z_n), R_{n+1}) \subset R_{n+1}
    $$
    and
    $$
    \theta[W^s_{\theta}(\zeta_n, \operatorname{int}P_{n})] \subset W^s_\theta(\theta(\zeta_n), \operatorname{int}P_{n+1}) \subset \operatorname{int}P_{n+1}.
    $$

    \item $\Theta^{n}(\zeta_0,z_0) = (\zeta_n,z_n)\in R_n\cap (\operatorname{int}P_{n}\times M)$ by construction.

    \item $\Theta^{n-1}(\zeta_0,z_0) = \Theta^{-1}(\zeta_{n},z_n)\in R_{n-1}\cap ( \operatorname{int}P_{n-1}\times M)$, because
    $$
    \Theta^{n}(\zeta_0,z_0)\in W^u_\Theta((\zeta_n,z_n), R_n) \subset R_n
    $$
    hence
    $$
    \Theta^{n-1}(\zeta_0,z_0) \in \Theta^{-1}[W^u_\Theta((\zeta_n,z_n), R_n)] \subset W^u_\Theta((\zeta_{n-1},z_{n-1}), R_{n-1}) \subset R_{n-1},
    $$
    and on the base,
    $$
    \theta^{n}(\zeta_0) \in W^u_\theta(\zeta_n, \operatorname{int}P_{n}) \subset \operatorname{int}P_{n}
    $$
    so
    $$
    \theta^{n-1}(\zeta_0) \in \theta^{-1}[W^u_\theta(\zeta_n, \operatorname{int}P_{n})] \subset W^u_\theta(\zeta_{n-1}, \operatorname{int}P_{n-1}) \subset \operatorname{int}P_{n-1}.
    $$

    \item $\Theta^{n-2}(\zeta_0,z_0)\in R_{n-2}\cap (\operatorname{int}P_{n-2}\times M)$, because from the previous item
    $$
    \Theta^{n-1}(\zeta_0,z_0)\in W^u((\zeta_{n-1},z_{n-1}), R_{n-1}) \subset R_{n-1}
    $$
    and therefore
    $$
    \Theta^{n-2}(\zeta_0,z_0) \in \Theta^{-1}[W^u_\Theta((\zeta_{n-1},z_{n-1}), R_{n-1})] \subset W^u_\Theta((\zeta_{n-2},z_{n-2}), R_{n-2}) \subset R_{n-2},
    $$
    with the analogous inclusion on the base:
    $$
    \theta^{n-2}(\zeta_0) \in \theta^{-1}[W^u_\theta(\zeta_{n-1}, \operatorname{int}P_{n-1})] \subset W^u_\theta(\zeta_{n-2}, \operatorname{int}P_{n-2}) \subset \operatorname{int}P_{n-2}.
    $$
\end{itemize}
Continuing in this way, we obtain
$$
\Theta^{n+1-k}(\zeta_0,z_0)\in R_{n+1-k}\cap (\operatorname{int}P_{n+1-k}\times M) \text{ for all } 0\le k\le n+1-m.
$$
which shows \eqref{nonemptyexpr}.

We now conclude Step~1. By \eqref{nonemptyexpr} we have
\begin{align*}
    \emptyset\neq \pi_s( \{(P_i,R_i)\}_{i\in\mathbb Z})&=  \bigcap_{n\in \mathbb N} \overline{\bigcap_{-n\leq i\leq n}\Theta^{-i}\left[R_i \cap ( \operatorname{int}P_i\times M)\right]}\\
    &\subset \bigcap_{n\in \mathbb N} \overline{\bigcap_{-n\leq i\leq n}\Theta^{-i}(R_i)} = \pi( (R_i)_{i\in \mathbb Z}).   
\end{align*}
Since  $\pi( (R_i)_{i\in \mathbb Z})$ is a singleton, it follows that $\pi_s( \{(P_i,R_i)\}_{i\in\mathbb Z}) =\pi( (R_i)_{i\in \mathbb Z}). $

\begin{step}[2] We show item $(1).$
    
\end{step}

Observe that the map $\pi_s$, defined above, is well defined and Hölder continuous. Indeed
\begin{align*}
    \mathrm{dist}_{\Omega\times M}( \pi_s\left( \{(P_i,R_i)\}_{i\in\mathbb Z}\right),  \pi_s\left (\{(P_i',R_i')\}_{i\in\mathbb Z} \right) &\leq \mathrm{dist}_{\Omega\times M}\left( \pi \left( (R_i)_{i\in\mathbb Z}\right),\pi \left((R_i')_{i\in\mathbb Z}\right) \right) \\
    &\leq C \mathrm{dist}_{\Sigma}((R_i)_{i\in\mathbb Z}, (R_i')_{i\in\mathbb Z})^{\eta}\\
    &\leq C\mathrm{dist}_{\Sigma_s}(\{(P_i,R_i)\}_{i\in\mathbb Z}, \{(P_i',R_i')\}_{i\in\mathbb Z})^{\eta},
\end{align*}
where $C$ and $\eta$ are the Hölder map constants of $\pi:\Sigma\to \Omega\times M$. Moreover, it is clear from the definitions of $\pi_s$ and $\pi_b$ that
\begin{align}
    \Theta \circ \pi_s = \pi_s \circ \sigma_s \ \text{and }\theta\circ \pi_b = \pi_b \circ \sigma_b. \label{eq:factor}
\end{align} 
Item $(1)$ now follows.

\begin{step}[3] We show that if $\mu$ is a $\chi$-hyperbolic, $\Theta$-invariant measure with $\mu(\mathcal X)=1$ and $(\mathrm{proj}_\Omega)_*\mu=\mathbb P$, then $\mu(\pi_s(\Sigma_s))=\mu(\pi_s(\Sigma_s^\#))=1$. Consequently, item~(2) follows.
\end{step}
Recall from Theorem \ref{thm:BCS} (2) that if $\mu(\mathcal{X})=1$, then  $\mu(\pi(\Sigma)) = \mu(\pi(\Sigma^\#)) =1$ for each $\chi$-hyperbolic $\Theta$-invariant measure. Since $\Sigma_s^\#\subset \Sigma_s$, and $(\proj_\Omega)_* \mu = \mathbb P$ we have from \eqref{eq:meas0} that
$$ \mu \left( (\omega,x) \in \Omega\times M;\, \pi(\un{R}) = (\omega,x)\ \text{for some }\un{R}\in \Sigma^\#\ \text{and }\omega\not\in \bigcup_{i\in\mathbb Z} \theta^{-i}(\partial \mathscr{P})  \right)=1.$$
Therefore, it is enough to show that for each 
$$\un{R}= (R_i)_{i\in\mathbb Z} \in \Sigma^\#\ \text{such that }\mathrm{proj}_\Omega \circ \pi(\un{R}) \not\in \bigcup_{i\in\mathbb Z} \theta^{-i}(\partial \mathscr{P}),$$ there exists $\un{P} = (P_i)_{i\in\mathbb Z}\in \Sigma_b$ such that $\un{(P,R)} = \{(P_i,R_i)\}_{i\in\mathbb Z} \in \Sigma_s^\#.$ Indeed, under this assumption, we obtain 
\begin{align*}
    \mu(\pi_s(\Sigma_s))&\geq \mu(\pi_s(\Sigma_s^\#))\\
    \geq&\mu \left( (\omega,x) \in \Omega\times M;\, \pi(\un{R}) = (\omega,x)\ \text{for some }\un{R}\in \Sigma^\#\ \text{and }\omega\not\in \bigcup_{i\in\mathbb Z} \theta^{-i}(\partial \mathscr{P})  \right)\\=&1.
\end{align*}
as required. 

Assume that $\un{R} \in \Sigma^\#$ and $\pi(\un{R}) = (\omega,x) \in \left(\Omega\setminus \bigcup_{i\in\mathbb Z} \theta^{-i}(\partial \mathscr{P})\right)\times M$. For each $i\in\mathbb Z$ let $P_i\in\mathscr P$ be such that $\theta^i(\omega) \in \operatorname{int}P_i.$
Since $\theta:\Omega\to \Omega$ is Anosov we have that
$$\bigcap_{-n \leq i\leq n}\theta^{-i}(\operatorname{int}P_i)\times M= \bigcap_{-n \leq i\leq n}\Theta^{-i}(\operatorname{int}P_i\times M)$$
is an open neighbourhood of $(\omega,x)$ in $\Omega\times M.$ Since $\pi(\un{R}) = (\omega,x)$ it follows that 
$$\bigcap_{-n\leq i\leq n} \Theta^{-i}(R_i) \cap V \neq \varnothing$$
for any open neighbourhood $V$ of $(\omega,x)$ in $\Omega\times M$. In particular,
$$ \varnothing\neq  \bigcap_{-n\leq i\leq n} \Theta^{-i}(R_i) \cap \bigcap_{-n\leq i\leq n}  \Theta^{-i} \left( \operatorname{int}  P_i \times M \right) =  \bigcap_{-n\leq i\leq n} \Theta^{-i}\left[R_i \cap(   \operatorname{int} P_i  \times M )\right].$$
Therefore, we obtain that $\{(P_i,R_i)\}_{i\in\mathbb Z} \in \Sigma_s.$ Since $\Sigma_b$ admits finitely many symbols and $\un{R}\in \Sigma^\#$, by the pigeonhole principle, it is clear that $ \{(P_i,R_i)\}_{i\in\mathbb Z}\in \Sigma_s^\#.$

\begin{step}[4] 
We show that the Lipschitz 1-block map
$$\mathrm{proj}_{\Sigma}:\Sigma_s\to\Sigma,\ 
\big((P_i,R_i)_{i\in\mathbb Z}\big)\mapsto (R_i)_{i\in\mathbb Z},$$
satisfies $\#\,\mathrm{proj}_{\Sigma}^{-1}(\underline{R})\le k_{\mathscr P}$ for every $\underline{R}\in\Sigma$, and item $(3)$. 
\end{step}

First of all, observe that given $\un{(P,R)}, \un{(P',R')}\in \Sigma_s$ we have that
\begin{align}
    \mathrm{dist}_{\Sigma} ( \mathrm{proj}_{\Sigma} (\un{(P,R}), \mathrm{proj}_{\Sigma} (\un{(P',R'}))) = \mathrm{dist}_{\Sigma} (\un{R}, \un{R'}) \leq   \mathrm{dist}_{\Sigma} (  \un{(P,R)},  \un{(P',R')})
\end{align}

Recall that $\pi_b:\Sigma_b\to \Omega$ is a finite-to-one surjective map over $\Omega$.  Given $\un{(P,R)}= (P_i,R_i)_{i\in\mathbb Z}\in \Sigma_s$, on the one hand we have that
$$\pi_s ( \un{(P,R)}) =  \bigcap_{n\in \mathbb N} \overline{\bigcap_{-n\leq i\leq n}\Theta^{-i}\left[R_i \cap (\operatorname{int}P_i\times M)\right]} \subset  \bigcap_{i\in  \mathbb Z} \theta^{-n}(P_n)\times M = \pi_b(\un{P})\times M.$$
On the other hand, we obtain that from  Step 1 we have that $\pi_s(\un{P,R}) = \pi(\un{R}).$ It follows that 
\begin{align*}
  \#\{\mathrm{proj}_{\Sigma}^{-1}(\un{R})\} &\leq \#\left\{\un{(P,R)} \in \Sigma_s; \proj_\Omega\circ \pi(\un{R}) = \pi_b(\un{P}) \right\}\\
   &= \sup_{\omega\in\Omega} \# \pi^{-1}(\omega)\leq \kappa_{\mathscr P}<\infty
\end{align*}

We claim that $\un{(P,R)}\in \Sigma_s^{\#}$ if, and only if, $\un{R} =\mathrm{proj}_{\Sigma}(\un{(P,R)}) \in \Sigma^\#.$ Indeed, if $\un{(P,R)}\in \Sigma_s^{\#}$, then by \eqref{Sigmahash} there exist $(P_1,R_1), (P_2,R_2)\in \mathscr V_s$, such that
$$\#\{i\in\mathbb N; (P,R)_i = (P_1,R_1) \} =  \#\{i\in\mathbb N; (P,R)_{-i} = (P_2,R_2) \}  =\infty.$$
Therefore $\un{R}\in\Sigma^\#.$ The proof of the reverse implication can be found in the last paragraph of Step 2. Hence,
$$\#\left\{\left(\pi_s|_{\Sigma_s^\#}\right)^{-1}(\omega,x)\right\} = \# \left\{ \mathrm{proj}_{\Sigma}^{-1}\left(\left(\pi|_{\Sigma^\#}\right)^{-1}(\omega,x)\right)\right\}.$$
Since the maps  $\mathrm{proj}_{\Sigma}$ and $\left.\pi\right|_{\Sigma^\#}$ are both finite-to-one, we obtain that  $$\#\left\{\left(\pi_s|_{\Sigma_s^\#}\right)^{-1} (\omega,x)\right\}<\infty.$$

Hence $\pi_s:\Sigma_s^{\#}\to \mathcal{X}$ is finite-to-one. Independently, since $\pi_b:\Sigma_b\to\Omega$ is obtained from a Markov partition for $\theta$, it is finite-to-one on $\Omega$. This proves item (3).

\begin{step}[5] We show that for any $\chi$-hyperbolic $\mu\in \mathcal M_e(\Theta)$ such that $(\mathrm{proj}_\Omega)_* \mu = \mathbb P$, there exists $\widehat{\mu}\in M_e(\sigma_s)$ such that $(\pi_s)_* \widehat{\mu} = \mu$.
\end{step}

This is done exactly as in \cite[Proof of Proposition 13.1]{Sarig2013}; we sketch the argument. By \cite[Step 1 of Proposition 13.1]{Sarig2013} the map
$$
F:(\omega,x)\in\pi_s(\Sigma_s^\#)\longmapsto \#\Big(\big.\pi_s^{-1}\big|_{\Sigma_s^\#}\{(\omega,x)\}\Big)\in\mathbb N
$$
is measurable. Moreover, there exists \(k=k_\mu\) such that \(F(\omega,x)=k\) for \(\mu\)-almost every \((\omega,x)\in\pi_s(\Sigma_s^\#)\) (\cite[Step 1 of Proposition 13.1]{Sarig2013}; see also \cite[Lemma 3.1]{Yoo2018}). Define the measure on \(\Sigma_s^\#\) by
$$
\widehat{\mu}(E):=\int_{\Omega\times M}\frac{1}{F(\omega,x)}
\left(\sum_{\pi_s\un{(P,R)}=(\omega,x)}\mathbbm{1}_E\big(\un{(P,R)}\big)\right)\, \mathrm d\mu(\omega,x),
$$
for any Borel set \(E\subset\Sigma_s^\#\). By \cite[Steps 3--4 of Proposition 13.1]{Sarig2013}, the measure \(\widehat{\mu}\) is
\begin{itemize}
    \item a well-defined \(\sigma_s\)-invariant probability measure; and
    \item such that \((\pi_s)_*\widehat{\mu}=\mu\) and \(h_{\widehat{\mu}}(\sigma_s)=h_\mu(\Theta)\).
\end{itemize}
Observe that, a priori, there is no need for \(\widehat{\mu}\) to be ergodic. The proof is finished by observing that each (typical) \(\sigma_s\)-invariant ergodic component \(\widehat{\mu}_i\) in the ergodic decomposition of \(\widehat{\mu}\) also satisfies \((\pi_s)_*\widehat{\mu}_i=\mu\) and \(h_{\widehat{\mu}_i}(\sigma_s)=h_\mu(\Theta)\) (see \cite[Step 5]{Sarig2013}).

\begin{step}[6] We show item (4).
\end{step}

Define
\[
\mathrm{proj}_{\Sigma_b}:\ \{(P_i,R_i)\}_{i\in\mathbb Z}\in\Sigma_s\ \mapsto\ \{P_i\}_{i\in\mathbb Z}\in\Sigma_b.
\]
Observe that \(\mathrm{proj}_{\Sigma_b}\) is a Lipschitz \(1\)-block map; in fact:
\begin{align}
    \mathrm{dist}_{\Sigma_b}\big(\mathrm{proj}_{\Sigma_b}(\un{(P,R)}),\mathrm{proj}_{\Sigma_b}(\un{(P',R')})\big)
    =\mathrm{dist}_{\Sigma_b}(\un{P},\un{P'})
    \leq \mathrm{dist}_{\Sigma_s}(\un{(P,R)},\un{(P',R')}).
\end{align}
Let \(\un{(P,R)}\in\Sigma_s\) and \(\pi_s(\un{(P,R)})=(\omega,x)\). Observe that, by construction, \(\pi_b(\un{P})=\omega\).
Hence
\[
\mathrm{proj}_{\Omega}\circ\pi_s(\un{(P,R)})=\mathrm{proj}_{\Omega}(\omega,x)=\omega=\pi_b(\un{P})
=\pi_b\circ\mathrm{proj}_{\Sigma_b}(\un{(P,R)}).
\]
Observe that, by construction, \(\proj_{\Sigma_b}\circ\sigma_s(\un{(P,R)})=\sigma_b(\un{P})=\theta\circ\proj_{\Sigma_b}(\un{(P,R)})\).

Let \(\mu\) be a \(\chi\)-hyperbolic measure on \(\Omega\times M\) such that \((\proj_\Omega)_*\mu=\mathbb P\). From Step 5, there exists \(\widehat{\mu}\in\mathcal M_e(\sigma_s)\) such that \((\pi_s)_*\widehat{\mu}=\mu\). We therefore obtain that
\begin{align*}
    \mathbb P&=(\proj_\Omega)_*\mu=(\proj_\Omega)_*(\pi_s)_*\widehat{\mu}=(\pi_b)_*\big[(\proj_{\Sigma_b})_*\widehat{\mu}\big].
\end{align*}
Defining the \(\sigma_b\)-invariant measure \(\widehat{\mathbb P}:=(\proj_{\Sigma_b})_*\widehat{\mu}\), we have \((\pi_b)_*\widehat{\mathbb P}=\mathbb P\). Since \(\mathbb P\) has full support, \eqref{eq:meas0} ensures that \(\mathbb P\big[\bigcup_{i\in\mathbb Z}\theta^{i}\partial\mathscr P\big]=0\), and therefore \(\widehat{\mathbb P}\) is the unique \(\sigma_b\)-invariant lift of \(\mathbb P\) with respect to the semi-conjugacy $\pi_b$ (see \cite[Theorem 3.18]{BowenBook}), which implies (4).

\begin{step}[7] We show item (5) and therefore conclude the proof of Theorem \ref{thm:1block}.
\end{step}

Let \(\mathcal X\) be a Borel homoclinic class. We restrict attention to the Borel homoclinic classes \(\mathcal X\) with the property that there exists a $\chi$-hyperbolic \(\Theta\)-invariant ergodic probability measure $\mu$ supported on \(\mathcal X\) such that $(\proj_\Omega)_* \mu = \mathbb P$. Recall that $\supp \mathbb P =\Omega$. To prove item \((5)\) it suffices to find a subshift \(\Sigma_s'\subset\Sigma_s\) such that
\begin{itemize}
    \item $\Sigma_s'$ is an irreducible topological Markov shift,
    \item $\mathrm{proj}_{\Sigma}(\Sigma_s') = \Sigma$, and
    \item $\widehat{\mathbb P}[\proj_{\Sigma_b}(\Sigma'_s)]=1$, where $\widehat{\mathbb P}$ is the unique measure in $\mathcal M_e (\Sigma_b)$ such that $(\pi_b)_*\widehat{\mathbb P} = \mathbb P$.
\end{itemize}

Since \(\mathcal{X}\) is a Borel homoclinic class, we may (and do) take \(\Sigma\) to be an irreducible two-sided topological Markov shift. Then \((\Sigma,\sigma)\) is topologically transitive and \(\sigma\) is a homeomorphism. Assume \(\Sigma\) is infinite (the periodic-orbit case is trivial).

Because \(\Sigma\) is a two-sided countable Markov shift with the standard metric (see Definition \ref{def:tms}), it is completely metrizable and has no isolated points. By transitivity, there exist $\un{R^{0}}$ with dense forward orbit and $\un{R^{1}}$ with dense backward orbit (see \cite[Proposition 1.1 and footnote 5]{SarigTF-TMS-2009}).

By \cite[Thm.~2.4]{Manoussos} applied to $\sigma:\Sigma\to\Sigma$ and to $\sigma^{-1}:\Sigma\to\Sigma$, we obtain that the sets
\[
G^{0}:=\left\{\un{Z}\in\Sigma:\; \un{R^{0}}\in \bigcap_{n\ge1}\overline{\bigcup_{k\ge n}\sigma^{k}(\un{Z})}\right\}\ \text{and }
G^{1}:=\left\{\un{Z}\in\Sigma:\; \un{R^{1}}\in \bigcap_{n\ge1}\overline{\bigcup_{k\ge n}\sigma^{-k}(\un{Z})}\right\}
\]
are dense \(G_{\delta}\) subsets of \(\Sigma\). By the Baire category theorem, \(G:=G^{0}\cap G^{1}\) is dense in $\Sigma$. Let $\un{R}$ be an element of $G$.

Observe that the set $\bigcap_{n\ge1}\overline{\bigcup_{k\ge n}\sigma^{k}(\un{R})}$ is closed, forward \(\sigma\)-invariant, and it contains \(\un{R^{0}}\). Hence $$\bigcap_{n\ge1}\overline{\bigcup_{k\ge n}\sigma^{k}(\un{R})} = \Sigma.$$
Applying the same argument to \(\sigma^{-1}\) we obtain that
\begin{align}
\bigcap_{n\ge1}\overline{\bigcup_{k\ge n}\sigma^{-k}(\un{R})}=\Sigma.\label{eq:sigmadense}
\end{align}
Therefore every point of $\Sigma$ is an accumulation point of both the forward and the backward $\sigma$-orbit of $\un{R}$ via $\sigma$.

Let \((\omega,x):=\pi(\un{R})\in\mathcal X\subset\Omega\times M\). We claim that $\omega\in \Omega\setminus\left(\bigcup_{i\in\mathbb Z}\theta^i(\partial\mathscr P)\right).$
Assume, for a contradiction, that the claim is false. Then, there exists \(i\in\mathbb Z\) with
\[
\theta^i(\omega)\in \partial\mathscr P=\partial^s\mathscr P\cup\partial^u\mathscr P.
\]

We treat the case \(\theta^i(\omega)\in\partial^s\mathscr P\); the unstable boundary case is analogous with backward time. Since the Borel homoclinic class \(\mathcal X\) admits a \(\Theta\)-invariant ergodic measure projecting to the full-supported ergodic measure \(\mathbb P\), by Theorem~\ref{thm:BCS}~(2) there exists \(\un R^*\in\Sigma\) with
\[
\pi(\un R^*)=(\omega^*,x^*)\ \text{and}\  \omega^*\in\Omega\setminus\partial^s\mathscr P.
\]
Because the forward orbit of \(\un R\) is dense, there is an increasing sequence \(k_j\to\infty\) such that $\sigma^{k_j}(\un R)\longrightarrow \un{R^*}$ as $j\to\infty$. By Theorem~\ref{thm:BCS}~(1), we have 
\begin{align}\mathrm{dist}_\Omega\big(\theta^{k_j}(\omega),\omega^*\big)
&\le \mathrm{dist}_{\Omega\times M}\big(\Theta^{k_j}(\omega,x),(\omega^*,x^*)\big)
= \mathrm{dist}_{\Omega\times M}\big(\Theta^{k_j}\pi(\un R),\pi(\un{R^*})\big) \nonumber\\
&\le C\,\mathrm{dist}_\Sigma\big(\sigma^{k_j}(\un R),\un {R^*}\big)^{\eta}\to 0\ \text{as }j\to\infty
\label{eq:Holder-goes-to-0}
\end{align}
for suitable constants \(C,\eta>0\).

On the other hand, by \cite[Prop.~3.5]{BowenBook},
\[
\theta(\partial^s\mathscr P)\subset \partial^s\mathscr P\ \text{and}\ 
\theta^{-1}(\partial^u\mathscr P)\subset \partial^u\mathscr P.
\]
Hence from \(\theta^i(\omega)\in\partial^s\mathscr P\) we get \(\theta^{i+n}(\omega)\in\partial^s\mathscr P\) for all \(n\ge0\). In particular, for all \(j\) large enough so that \(k_j\ge i\), $\theta^{k_j}(\omega)\in\partial^s\mathscr P.$ Since \(\partial^s\mathscr P\) is closed and \(\omega^*\notin\partial^s\mathscr P\), the distance $\delta:=\mathrm{dist}_\Omega\big(\omega^*,\partial^s\mathscr P\big)>0,$ and therefore $\mathrm{dist}_\Omega\big(\theta^{k_j}(\omega),\omega^*\big)\ge \delta$ for all such $j$ large enough.
This contradicts \eqref{eq:Holder-goes-to-0}. The case \(\theta^i(\omega)\in\partial^u\mathscr P\) is analogous. Hence
\(\omega\notin \bigcup_{i\in\mathbb Z}\theta^i(\partial\mathscr P)\), proving the claim.

Using that $\omega\notin \bigcup_{i\in\mathbb Z}\theta^i(\partial\mathscr P)$, from the proof of Step 3, there exists $\un{(P,R)}\in \Sigma_s$ such that $\pi_s(\un{(P,R)})=(\omega,x)$ and $\proj_{\Sigma}(\un{(P,R)}) = \un{R}$.  Let $\mathscr G'_s \subset \mathscr G_s$ be the maximal irreducible component of $\mathscr G_s$ containing $(P_0,R_0)$ (the $0$-th element of $\un{(P,R)}$) and define $\Sigma'_s := \Sigma(\mathscr G'_s) \subset \Sigma(\mathscr G_s) =  \Sigma_s.$ Observe that $\un{(P,R)}\in \Sigma_s'$ and therefore $\sigma^i_{s}(\un{(P,R)})\in \Sigma_s'$ for each $i\in \mathbb Z.$ 

We claim that $\operatorname{proj}_{\Sigma}(\Sigma'_s)=\Sigma$.
First, density: let $U\subset\Sigma$ be nonempty open. Since the orbit of $\un R$ is dense,
there exists $i\in\mathbb Z$ with $\sigma^i(\un R)\in U$. By construction, $\sigma^i(\un R)=\operatorname{proj}_{\Sigma}\left(\sigma_s^i(\un{(P,R)})\right)$ with $\sigma_s^i(\un{(P,R)})\in\Sigma'_s$, hence $U\cap \proj_{\Sigma}(\Sigma'_s)\neq\varnothing$. Thus
$\proj_{\Sigma}(\Sigma'_s)$ is dense.

To see that $\operatorname{proj}_{\Sigma}(\Sigma'_s)$ is closed, let $\{\un{R}^{(m)}\}_{m\ge1}\subset \proj_{\Sigma}(\Sigma_s')$ with $\un{R}^{(m)}\to \un{R}$ in $\Sigma$ as $m\to\infty$. For each $m$ choose a lift $\un{Y}^{(m)}=\{(P^{(m)}_j,R^{(m)}_j)\}_{j\in\mathbb Z}\in\Sigma_s'$ such that $\proj_{\Sigma}(\un{Y}^{(m)})=\un{R}^{(m)}$ (so $R^{(m)}_j=\un{R}^{(m)}_j$ for all $j$). Since $\Sigma_b$ is a subshift of finite type, it is compact. Therefore, passing through a subsequence if necessary, we can assume that $\un{P}^{(m)}:=\{P^{(m)}_j\}_{j\in\mathbb Z}\to \un{P}=\{P_j\}_{j\in\mathbb Z}$ in $\Sigma_b$ as $m\to\infty$. Because $\un{R}^{(m)}\to \un{R}$, for each fixed $j$ we have $R^{(m)}_j=R_j$ for all large $m$, and therefore,
$$
\un{Y}^{(m)}=\{(P^{(m)}_j,R^{(m)}_j)\}_{j\in\mathbb Z}\longrightarrow \un{Y}:=\{(P_j,R_j)\}_{j\in\mathbb Z}.
$$
The space $\Sigma_s'$ is a countable-state Markov shift (edge shift), hence closed; thus $\un{Y}\in\Sigma_s'$. Finally, $\proj_{\Sigma}(\un{Y})=\un{R}$, so $\un{R}\in \proj_{\Sigma}(\Sigma_s')$. This proves that $\proj_{\Sigma}(\Sigma_s')$ is closed.

In this way $\Sigma = \proj_{\Sigma} (\Sigma_s')$. As in Step 5, the same proof yields that $\widehat{\mathbb P}[\proj_{\Sigma_b}(\Sigma_s)]=1.$

\end{proof}

\section{Measures of maximal relative entropy on topological Markov shifts} 

This section is devoted to proving Theorem \ref{thm:dream} and Corollary \ref{cor:dream}. 
Since the results are formulated in an abstract shift setting, we adopt notation consistent with the literature on relative entropy for shifts. 

\label{sec:4}
\begin{definition}
    Given 
    \begin{enumerate}
        \item $(X,\sigma_X)$ and $(Y,\sigma_Y)$ shift spaces;
        \item $\pi: X\to Y$ a $1$-block map, $\nu(\pi(X)) =1$ and $\mathrm{supp}(\nu) = Y$;
        \item a $\sigma_Y$-invariant measure $\nu$ on $Y$;
        \item two $\sigma_X$-invariant measures $\mu_1$ and $\mu_2$ on $X$ with $\pi_*\mu_1 = \pi_*\mu_2 = \nu$;
    \end{enumerate}
we define the \emph{$\nu$-relatively independent joining of $\mu_1$ and $\mu_2$} as the unique measure on $X\times X$ satisfying\label{eq:rj}
\begin{align*}
    \mu_1\otimes_\nu \mu_2(A_1\times A_2) = \int_Y \mathbb E_{\mu_1}[\mathbbm 1_{A_1} \mid \pi^{-1}\mathcal B(Y) ]\circ \pi^{-1}(y) \cdot \mathbb E_{\mu_2}[\mathbbm 1_{A_2} \mid \pi^{-1} \mathcal B(Y) ]\circ \pi^{-1}(y)\, \nu(\d y).   
\end{align*}
for each $A_1,A_2\in\mathcal B(X)$.

In the case that $(X,\sigma_X)$ is a topological Markov shift and $\mu$ is a $\sigma_X$-invariant probability measure, we denote $h_{\mathrm{top}}(\sigma_X)$ as its  Gurevich topological entropy and $h_{\mu}(\sigma_X)$ its metric entropy, for details see \cite{SarigThermo} (see also, \cite{Gurevich1,Gurevich2}).  
\end{definition}

\begin{definition}[\cite{Quas}]\label{def:relorth}
We say that $\mu_1$ and $\mu_2$ are $\nu$-\emph{relatively orthogonal} if 
$$\mu_1\otimes_{\nu} \mu_2 \left(\{ (u,v)\in X\times X; u_0 = v_0\}\right) =0 $$
\end{definition}

\begin{notation}
    Given a topological Markov shift $(X,\sigma)$, we denote as $\mathcal A_X$ its alphabet. Also, given $x\in X$ we denote as $x_i$ its $i$-th coordinate.  
  Let $a\in A_X$, we define the $1$-cylinder at coordinate $k$ as $[a]_{k} := \{x \in X; x_k = a\}.$
\end{notation}

The purpose of this section is to show the following theorem.
\begin{theorem}\label{thm:dream}
Let $(X,\sigma_X)$ be an irreducible countable topological Markov shift with finite Gurevich topological entropy,  $(Y,\sigma_Y)$ an irreducible subshift of finite type and $\pi:X\to Y$ a $1$-block map such that $\nu(\pi(X)) =1$ and $\mathrm{supp}(\nu) = Y$. Then, if $\mu_1$ and $\mu_2$ are ergodic measures of maximal $\nu$-relative entropy then  $\mu_1$ and $\mu_2$ are $\nu$-relatively orthogonal.
\end{theorem}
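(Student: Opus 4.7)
The plan is by contradiction: assume $\mu_1 \neq \mu_2$ are distinct ergodic measures of maximal $\nu$-relative entropy (the theorem tacitly excludes $\mu_1 = \mu_2$, since the relatively independent self-joining $\mu \otimes_\nu \mu$ always charges $D := \{(u,v) \in X \times X : u_0 = v_0\}$) and that they are \emph{not} $\nu$-relatively orthogonal, so $\lambda := \mu_1 \otimes_\nu \mu_2$ satisfies $\lambda(D) > 0$. The goal is to exhibit a new $\pi$-lift $\mu^*$ of $\nu$ with $h_{\mu^*}(\sigma_X \mid \nu) > h_{\mu_1}(\sigma_X \mid \nu)$, violating the relative variational principle.

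First I would reduce to an ergodic joining. Because $\nu$ is ergodic (being the $\pi$-image of the ergodic measure $\mu_1$), the ergodic decomposition $\lambda = \int \lambda_\omega \, d\rho(\omega)$ under $\sigma_X \times \sigma_X$ produces components $\lambda_\omega$ whose coordinate projections lift $\nu$. Since factors of ergodic measures are ergodic, uniqueness of the ergodic decomposition applied to $\mu_i = \int (p_i)_* \lambda_\omega \, d\rho(\omega)$ forces $(p_i)_* \lambda_\omega = \mu_i$ for $\rho$-almost every $\omega$. Thus each ergodic component is itself a joining of $\mu_1$ and $\mu_2$ over $\nu$, and since $\lambda(D) > 0$, at least one such component $\tilde\lambda$ satisfies $\tilde\lambda(D) > 0$; I would replace $\lambda$ by $\tilde\lambda$.

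Next I would perform a Bernoulli switching on the first-return map $S : D \to D$ of $\sigma_X \times \sigma_X$, whose return time $n(u, v) = \min\{n \geq 1 : u_n = v_n\}$ is finite almost surely with $\int n \, d\lambda_D = 1/\lambda(D)$ by Kac. Couple independently with the symmetric Bernoulli measure $\mathbb P$ on $\{L, R\}^{\mathbb Z}$, and form the Kakutani tower over $(D \times \{L, R\}^{\mathbb Z}, \lambda_D \otimes \mathbb P, S \times \sigma_{\{L, R\}})$ with height $n$. Define $\Phi$ to send a tower point to the sequence in $X$ that, on the $k$-th excursion $[n_k, n_{k+1})$, outputs the corresponding segment of $u$ when $\omega_k = L$ and of $v$ when $\omega_k = R$; admissibility at the switching times $n_k$ is automatic because $u_{n_k} = v_{n_k}$. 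Setting $\mu^* := \Phi_*(\text{suspension measure})$ yields a $\sigma_X$-invariant probability, and $\pi_* \mu^* = \nu$ since $\pi(u) = \pi(v)$ $\lambda$-almost surely.

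The final step is the entropy bookkeeping. By Abramov's formula, the suspension has $\sigma_X$-entropy $\lambda(D)(h_{\lambda_D}(S) + \log 2) = h_\lambda(\sigma_X \times \sigma_X) + \lambda(D)\log 2$. Since the base factor $\nu$ is preserved intact by $\Phi$ and $\mu_1$ appears as a factor of $\lambda$ via $p_1$, comparing conditional entropies relative to $\pi^{-1}(\mathcal B(Y))$ should give
\[ h_{\mu^*}(\sigma_X \mid \nu) \;\geq\; h_{\mu_1}(\sigma_X \mid \nu) + \lambda(D) \log 2 \;>\; h_{\mu_1}(\sigma_X \mid \nu), \]
producing the contradiction. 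The hard part is precisely this inequality in the countable-state context: one must verify finiteness of $h_{\mu_i}(\sigma_X)$ (secured by the finite Gurevich topological entropy hypothesis), integrability of $n(u,v)$ (Kac), measurability and admissibility of $\Phi$, and, most delicately, confirm that the Bernoulli entropy $\lambda(D)\log 2$ actually survives into $\mu^*$ rather than being absorbed by the information $\Phi$ discards when it forgets the unused excursion. This amounts to a careful partition-refining analysis adapting the Petersen--Quas--Shin argument beyond the finite-alphabet setting, and is where the countable-state nature of $(X, \sigma_X)$ demands the finite Gurevich entropy hypothesis to keep all the relevant quantities finite.
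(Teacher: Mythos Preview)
Your proposal has the right architecture---the switching construction you describe \emph{is} the Petersen--Quas--Shin mechanism---but the displayed entropy inequality is where the argument breaks, and not merely for technical reasons. The claim
\[
h_{\mu^*}(\sigma_X\mid\nu)\ \ge\ h_{\mu_1}(\sigma_X\mid\nu)+\lambda(D)\log 2
\]
is false in general. The suspension carries relative entropy $h_\lambda(\sigma_X\times\sigma_X\mid\nu)+\lambda(D)\log 2$, and for the relatively independent joining this equals $2h_{\mu_1}(\sigma_X\mid\nu)+\lambda(D)\log 2$; but $\mu^*$ is a \emph{factor} of the suspension via $\Phi$, so passing to $\mu^*$ only gives an \emph{upper} bound, which is useless. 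The information $\Phi$ discards (the unused excursion) is exactly of order $h_{\mu_1}(\sigma_X\mid\nu)$, so the naive bookkeeping cancels and you recover nothing. The actual gain in the PQS inequality is not $\lambda(D)\log 2$ but the strictly smaller term
\[
\int_S \Xi\,d\eta,\qquad \Xi=\sum_j\psi\Bigl(\tfrac{A^j+B^j}{2}\Bigr)-\tfrac12\psi(A^j)-\tfrac12\psi(B^j),
\]
coming from strict concavity of $\psi(t)=-t\log t$, where $A^j,B^j$ are the one-step conditional distributions of $\mu_1,\mu_2$ given the past and the $Y$-fibre. Showing $\int_S\Xi>0$ requires a separate lemma (PQS Lemma~3): if $A^j=B^j$ a.e.\ on $S$ for all $j$, then $\mu_1=\mu_2$. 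Your reduction to an ergodic component of $\lambda$ also discards the relatively-independent structure that makes $A^j,B^j$ tractable, so that step should be dropped.

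The second gap is the countable alphabet. The PQS inequality is proved for subshifts of finite type, and you cannot simply invoke it on $X$. The paper handles this by approximating $X$ from within: for each $n$ it collapses all but $n$ symbols (respecting the $Y$-fibres) to obtain a finite-alphabet factor $X_n$ with induced map $\pi_n:X_n\to Y$, pushes $\mu_1,\mu_2$ down to $\mu_n^1,\mu_n^2$, applies the PQS bound on $X_n$ to get $\mu_n^3$, and then shows $\liminf_n\int_{S_n}\Xi_n\,d\eta_n>0$ via Doob's martingale convergence theorem (the conditional expectations $A_n^j,B_n^j$ converge to the limiting $A^j,B^j$, which differ on a set of positive measure by the lemma above). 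Since $h_{\mu_n^i}(\sigma_{X_n})\to h_{\mu^i}(\sigma_X)$, the strict inequality survives the limit and contradicts maximality. Your final sentence gestures at this, but the approximation-plus-martingale step is the substance of the countable-state extension, not a routine verification.
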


Before proving Theorem \ref{thm:dream} we highlight the following useful corollary:
\begin{corollary}\label{cor:dream}
  Let $(X,\sigma_X)$ be an irreducible topological Markov shift with finite Gurevich topological entropy, $(Y,\sigma_Y)$ an irreducible subshift of finite type and $\pi:X\to Y$ a $1$-block map such that $\nu(\pi(X)) =1$ and $\mathrm{supp}(\nu) = Y$. Then, $(X,\sigma_X)$ can only admit countably many ergodic measures of maximal $\nu$-relative entropy.
\end{corollary}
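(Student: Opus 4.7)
The plan is to deduce Corollary~\ref{cor:dream} essentially immediately from Theorem~\ref{thm:dream} via a pigeonhole argument on the countable alphabet $\mathcal{A}_X$. Let $\{\mu_\alpha\}_{\alpha\in I}$ enumerate the ergodic measures of maximal $\nu$-relative entropy on $X$, and for each $\alpha\in I$ and $a\in\mathcal{A}_X$ set
\[
f_{\alpha,a}(y):=\mathbb{E}_{\mu_\alpha}\bigl[\mathbbm{1}_{[a]_0}\bigm|\pi^{-1}\mathcal{B}(Y)\bigr]\circ\pi^{-1}(y),
\]
so that $(f_{\alpha,a}(y))_{a\in\mathcal{A}_X}$ is a probability vector on $\mathcal{A}_X$ for $\nu$-a.e.\ $y$.

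The first step will be to translate the orthogonality provided by Theorem~\ref{thm:dream} into disjoint-support information on $\mathcal{A}_X$. Decomposing the diagonal event $\{(u,v)\in X\times X:u_0=v_0\}=\bigsqcup_{a\in\mathcal{A}_X}[a]_0\times[a]_0$ and applying the definition of the relatively independent joining $\mu_\alpha\otimes_\nu\mu_\beta$ for distinct $\alpha,\beta\in I$ yields
\[
\int_Y \sum_{a\in\mathcal{A}_X} f_{\alpha,a}(y)\,f_{\beta,a}(y)\,\nu(\mathrm{d}y)=0.
\]
Nonnegativity of the integrand forces $f_{\alpha,a}(y)\,f_{\beta,a}(y)=0$ for every $a$ and $\nu$-a.e.\ $y$; equivalently, the alphabet-supports of the probability vectors $f_{\alpha,\cdot}(y)$ and $f_{\beta,\cdot}(y)$ are $\nu$-almost surely disjoint.

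The second step is the actual counting. For each $a\in\mathcal{A}_X$ I will define $B_\alpha^a:=\{y\in Y:f_{\alpha,a}(y)>0\}$ and $F_a:=\{\alpha\in I:\nu(B_\alpha^a)>0\}$. The disjoint-support conclusion forces $\nu(B_\alpha^a\cap B_\beta^a)=0$ for distinct $\alpha,\beta\in F_a$, so $\{B_\alpha^a\}_{\alpha\in F_a}$ is an essentially disjoint family of positive $\nu$-measure subsets of the probability space $(Y,\nu)$; stratifying by the value of $\nu(B_\alpha^a)$ through dyadic intervals shows each stratum is finite, whence $F_a$ is countable. Since every $\mu_\alpha$ is a probability measure with $\sum_{a\in\mathcal{A}_X}\mu_\alpha([a]_0)=1$, each $\alpha\in I$ lies in $F_a$ for at least one $a\in\mathcal{A}_X$, giving $I=\bigcup_{a\in\mathcal{A}_X}F_a$, a countable union of countable sets. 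I do not expect a genuine obstacle; the only subtlety worth flagging is that the disjointness statement holds pairwise rather than on a single $I$-uniform full-measure set, but this is harmless because the countability of each $F_a$ is extracted purely from pairwise essential disjointness inside $(Y,\nu)$.
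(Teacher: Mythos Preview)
Your proposal is correct and follows essentially the same route as the paper: translate the $\nu$-relative orthogonality from Theorem~\ref{thm:dream} into essential disjointness of the sets $\{y: \mathbb{E}_{\mu_\alpha}[\mathbbm{1}_{[a]_0}\mid\pi^{-1}\mathcal B(Y)]\circ\pi^{-1}(y)>0\}$, then use that a probability space supports only countably many pairwise essentially disjoint positive-measure sets. The only cosmetic difference is that the paper argues by contradiction (fix one symbol $a$ on which uncountably many $\mu_i$ are positive and derive a contradiction from the stratification $\nu(A_i)>1/n$), whereas you argue directly by showing each $F_a$ is countable and taking the union over $a\in\mathcal A_X$.
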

\begin{proof}
Assume that $(X,\sigma_X)$ admits uncountably many $\nu$-relative measures of maximal entropy $\{\mu_i\}_{i\in\mathcal I}$.
Since $\mathcal I$ is uncountable while the alphabet $\mathcal A_X$ is countable, there exist $a\in\mathcal A_X$ and an uncountable subset $\mathcal I_0\subset\mathcal I$ such that
$$
\mu_i([a]_0)>0 \text{ for every } i\in\mathcal I_0 .
$$

For each $i\in\mathcal I_0$ we have that
$$\mu_i([a]_0) = \int_Y \mathbb E_{\mu_i}\left[\mathbbm{1}_{[a]_0}\mid\pi^{-1}\mathcal B(Y)\right]\circ \pi^{-1}(y)\,\nu(\mathrm dy) > 0.$$
Let $g_i(y):=\mathbb E_{\mu_i}\left[\mathbbm{1}_{[a]_0}\mid\pi^{-1}\mathcal B(Y)\right] (\pi^{-1}(y))$;  then with $A_i:=\{y\in Y;\, g_i(y)>0\}$ we have $\nu(A_i)>0$.

By Theorem~\ref{thm:dream}, for any distinct $i_1,i_2\in\mathcal I_0$ the $\nu$-relatively independent joining $\mu_{i_1}\otimes_\nu \mu_{i_2}$ gives zero mass to the coincidence event $\{(u,v)\in X\times X;\, u_0=v_0\}$; in particular,
$$
\mu_{i_1}\otimes_\nu \mu_{i_2}\bigl(\{(u,v);\, u_0=v_0=a\}\bigr)=0 .
$$
From the definition of $\mu_{i_1}\otimes_\nu \mu_{i_2}$ given in Definition \ref{eq:rj} we obtain that $
\nu(A_{i_1}\cap A_{i_2})=0.$ Hence the family $\{A_i\}_{i\in\mathcal I_0}$ is pairwise disjoint modulo $\nu$-null sets.

Moreover,
$$
\mathcal I_0 = \bigcup_{n\in\mathbb N}\Bigl\{ i\in\mathcal I_0;\, \nu(A_i)>\tfrac1n \Bigr\}.
$$
For some $n$ the set $\mathcal I_1:=\{ i\in\mathcal I_0;\, \nu(A_i)>\tfrac1n \}$ is uncountable. But in a probability space there can be at most $n$ pairwise disjoint measurable sets of measure $>1/n$, a contradiction. Therefore $(X,\sigma_X)$ cannot admit uncountably many $\nu$-relative measures of maximal entropy.

\end{proof}

From now on, we fix $(X,\sigma_X)$ as an irreducible topological Markov shift with countable alphabet, $(Y,\sigma_Y)$ as an irreducible subshift of finite type, and $\pi: X\to Y$ a continuous $1$-block map such that $\nu(\pi(X))=1$.

In the following, we will construct a subshift of finite type $(X_n,\sigma_{X_n})$ which, in some sense, approximates $(X,\sigma_X)$. Since $\pi$ is a $1$-block map we have that $\gamma := \{\pi^{-1}([i]_0)\in\mathcal B(X); i\in\mathcal A_Y\}$ is a partition of $X$ such that each $\gamma$ is a union of $1$-cylinders at coordinate $0$. For each $n\in\mathbb N$ we define the partition of $X$ as
$$\alpha_n := \left\{[a_1]_0,[a_2]_0,\ldots, [a_n]_0, X \setminus \left(\bigcup_{i=1}^{n}[a_i]\right)\right\} \bigvee \gamma, $$
where $\{a_i; i\in\mathbb N\}$ is a fixed enumeration of $\mathcal A_X$. Observe that each element of $\alpha_n$ is the union of $1$-cylinders at coordinate $0$. 

Using the partition $\alpha_n$, we define a subshift of finite type $(X_n,\sigma_{X_n})$ as follows.
Let \(\mathscr{G}_n = (\mathscr V_n,\mathscr E_n)\) be the directed graph with vertex set $\mathscr V_n := \alpha_n$ and edge set

$$\mathscr E_n := \bigl\{ A\to A';\, A,A'\in\alpha_n\ \text{and } A \cap \sigma_X^{-1}(A') \neq \varnothing \bigr\}.$$
We then set $X_n := \Sigma(\mathscr{G}_n)$.

Define \(\mathrm{proj}_{X_n}:X\to X_n\) as the canonical $1$-block code associated with $\alpha_n$:
$$(\mathrm{proj}_{X_n}(x))_k = A \ \text{whenever}\ \sigma_X^k(x)\in A,\ A\in\alpha_n,\ k\in\mathbb Z.$$

Also, since the partition $\alpha_n$ refines the partition $\gamma$, for each $A\in\alpha_n$ there exists a unique $i(A)\in \{[i_0];i_0\in\mathcal A_Y\}$ such that $\pi(A)\subset [i(A)]_0 \subset Y.$ This labels each atom $A\in\alpha_n$ by $i(A)$ and induces a $1$-block map $\pi_n:X_n\to Y$ (induced by $\pi$ defined by
$$\pi_n:(A_k)_{k\in \mathbb N}\in X_n \mapsto (i(A_k))_{k\in\mathbb N} \in Y. $$
Observe that for each $n\in\mathbb N$ we have that $\pi = \pi_n \circ \mathrm{proj}_{X_n}.$  

Let \(\mu\) be a \(\sigma\)-invariant measure on \(X\) with \(\pi_*\mu=\nu\), and for each \(n\in\mathbb N\) set \(\mu_n=(\mathrm{proj}_{X_n})_*\mu\). Since \(\pi=\pi_n\circ\mathrm{proj}_{X_n}\), we have
\[
(\pi_n)_*\mu_n=(\pi_n\circ\mathrm{proj}_{X_n})_*\mu=\pi_*\mu=\nu.
\]
Hence \(\nu\) is supported on \(\pi_n(X_n)\), i.e. \(\nu\big(Y\setminus\pi_n(X_n)\big)=0\). Because \(\pi_n(X_n)\) is closed in \(Y\) and \(\supp(\nu)=Y\), it follows that \(\pi_n(X_n)=Y\). Therefore \(\pi_n:X_n\to Y\) is surjective for every \(n\in\mathbb N\).

From the Kolmogorov-Sinai theorem (see \cite[Theorem 4.9]{WaltersBook}) we have that
$$h_{\mu_n}(X_n) = H_{\mu_n}\left(\beta_n \,\left|\, \bigvee_{i=1 }^{\infty} \sigma^{-i}_{X_n} (\beta_n)  \right. \right) = H_{\mu}\left(\alpha_n\,\left|\, \bigvee_{i=1 }^{\infty} \sigma^{-i}_X (\alpha_n)\right. \right)$$
where $\beta_n$ is the partition of $X_n$ $1$-cylinders at coordinate $0$. Since, $\alpha_1\preccurlyeq\alpha_2\preccurlyeq \ldots$ and $\mathcal B(X)= \mathcal B\left( \bigvee_{n\in\mathbb N}\alpha_n\right)$ from  \cite[Theorem 4.14]{WaltersBook}
$$h_\mu (\sigma_X) = \lim_{n\to\infty}  h_{\mu_n}(\sigma_{X_n}).$$
With a slight abuse of notation, we write $a_1,\ldots,a_n\in\mathcal A_{X_n}$ whenever the context is clear.

\subsection{Petersen-Quas-Shin construction}
\label{sec:construction}

Select and fix $a\in A_X$. Choose $n$ large enough that  $[a]_0\in\alpha_n $ (so $a\in A_{X_n}$).  Let $\mu_n^1,\mu_n^2$ be two distinct ergodic $\sigma_{X_n}$-invariant measures on $X_n$ with $(\pi_n)_*\mu_n^1=(\pi_n)_*\mu_n^2=\nu,$ and assume \(\mu_n^1\) and \(\mu_n^2\) are not \(\nu\)-relatively orthogonal (Definition \ref{def:relorth}).
Following \cite{Quas}, there exists an invariant measure \(\mu_n^3\) of $(X_n,\sigma_{X_n})$ with \((\pi_n)_*\mu_n^3=\nu\) and
\[
h_{\mu_n^3}(\sigma_{X_n}) > \tfrac12\left(h_{\mu_n^1}(\sigma_{X_n}\right)+h_{\mu_n^2}\left(\sigma_{X_n})\right).
\]

We sketch the construction of $\mu_n^3$. Diagram \ref{diagram2} summarises all semi-conjugacies used in this construction in this section. Recall, from the previous section, that $\pi_n:X\to Y$ is surjective for every $n\in\mathbb N$.  Let
$
Z_n := X_n \times X_n \times \{0,1\}^{\mathbb Z}
$
and define the map $T := \sigma_{X_n}\times\sigma_{X_n}\times\sigma_{\mathrm{Ber}}$, where $\sigma_{\mathrm{Ber}}$ is the shift on $\{0,1\}^{\mathbb Z}$.
Let $\zeta$ be the $(1/2,1/2)$–Bernoulli measure on $\{0,1\}^{\mathbb Z}$. Also, set $\widehat{\mu}_n := \mu_n^1 \otimes_\nu \mu_n^2$ and  $\eta_n = \widehat{\mu}_n  \otimes \zeta$. Observe that $\eta_n$ is $T$–invariant.

Define three maps $\proj_{X_n}^1,\proj_{X_n}^2,\proj_{X_n}^3: Z_n \to X_n$ by
\begin{enumerate}
  \item[(i)] $\proj_{X_n}^1(u,v,r) := u$,
  \item[(ii)] $\proj_{X_n}^2(u,v,r) := v$,
  \item[(iii)] for $\proj_{X_n}^3$, first set, for each $k\in\mathbb Z$,
   \begin{align}
  N_k^n(u,v) := \sup\{m<k ; u_m = v_m=a\},\, \text{with the convention } \sup\varnothing = -\infty.\label{eq:Nn}
  \end{align}
   \( r\in\{0,1\}^{\mathbb Z} \), \( r_n \) denotes its \( n \)-th coordinate, and let
  \begin{align}
      r\in\{0,1\}^\mathbb Z\mapsto r_{-\infty}\in\{0,1\}\label{eq:rinfty}
  \end{align} be an independent Bernoulli\( (\tfrac12,\tfrac12) \) random variable. For \( k\in\mathbb Z \) define
$$
\bigl(\mathrm{proj}^3_{X_n}(u,v,r)\bigr)_k:=\begin{cases}
u_k,&\text{if }r_{N^n_k(u,v)}=0,\\
v_k,&\text{if }r_{N^n_k(u,v)}=1.
\end{cases}
$$

\end{enumerate}
By construction, $\proj_{X_n}^3(u,v,r)\in X_n$: between successive coincidence times of $u$ and $v$ we copy a block from either $u$ or $v$, and since $X_n$ is a topological Markov shift, switching only at coincidence times preserves admissibility. From the definition of $\widehat{\mu}_n$, we have that $$\pi_n(u) = \pi_n(v)\mbox{ for } \widehat{\mu}_n\text{-almost every $(u,v)\in X_n\times X_n$}.$$ Since $\pi_n$ is 1-block map we obtain that
\begin{align}
  \pi_n \circ \proj_{X_n}^3(z) = \pi_n \circ \proj_{X_n}^2(z) =  \pi_n \circ \proj_{X_n}^1(z) \ \text{for }\eta_n\text{-almost every}\ z\in Z_n.\label{eq:pin}  
\end{align}
Observe that by construction $\mu_n^1 = (\proj_{X_n}^1)_*\eta_n$ and $\mu_n^2 = (\proj_{X_n}^2)_*\eta_n$. We therefore define  
$$\mu_n^3 = (\proj_{X_n}^3)_*\eta_n,$$
Since $\proj_{X_n}^3:Z_n\to X_n$ is a $1$-block factor map, $\mu_n^3$ is $\sigma_{X_n}$-invariant.

Now, we quote a result from \cite{Quas} which is contained in the proof of Theorem 1 (see \cite[Equation (45)]{Quas}).

\begin{theorem}[\cite{Quas}]\label{thm:quasin}
Under the above assumptions and notations, for any $j_*\in\mathcal A_{X_n}$ for each $n\in\mathbb N$
$$
h_{\mu_n^3}(\sigma_{X_n}\mid \nu)
\ge
\frac{h_{\mu_n^1}(\sigma_{X_n}\mid \nu) + h_{\mu_n^2}(\sigma_{X_n}\mid \nu)}{2} + \int_{S_n^{(a)}} \Xi_n^{j_*}(z)\, \eta_n(\mathrm{d}z),
$$
where
$$
S_n^{(a)} := \left\{ z\in Z_n;\, (\proj_{X_n}^1 (z))_{-1} = (\proj_{X_n}^2 (z))_{-1}=a \,\right\}.
$$
The integrand $\Xi_n^{j_*}(z)$ is given by
$$
\Xi_n^{j_*}(z)
= \psi\left(\frac{A_{n}^{j_*}(z)+B_{n}^{j_*}(z)}{2}\right)
 - \frac{1}{2}\psi\big( A_{n}^{j_*}(z)\big)
 - \frac12  \psi\big(B_{n}^{j_*}(z)\big),
$$
with $\psi(t):= -t \log t$, and
$$
A_{n}^{j_*}(z):=\mathbb{E}_{\eta_n}\big(\mathbbm{1}_{[j_*]_0}\circ \proj_{X_n}^1 \,\big|\, \mathcal{B}_{1,n}^- \vee \mathcal{B}_{0,n}\big)(z),\ 
B_{n}^{j_*} (z):=\mathbb{E}_{\eta_n}\big(\mathbbm{1}_{[j_*]_0}\circ \proj_{X_n}^2 \,\big|\, \mathcal{B}_{2,n}^- \vee \mathcal{B}_{0,n}\big)(z),
$$
where 
$$
\mathcal{B}_{1,n}^- := \sigma\left(\,\{z\in Z_n ;\  (\proj_{X_n}^1 (z))_{-m}=j \,\} ;\, j\in\mathcal A_{X_n},\ m\in \mathbb N\right),
$$
$$
\mathcal{B}_{2,n}^- := \sigma\left(\,\{z\in Z_n ;\  (\proj_{X_n}^2 (z))_{-m}=j \,\};\, j\in\mathcal A_{X_n},\ m\in \mathbb N\right),
$$
\begin{align}
    \mathcal B_{0,n} := (\pi_n\circ\proj_{X_n}^1)^{-1}(\mathcal B(Y)).\label{eq:B0}
\end{align}
\end{theorem}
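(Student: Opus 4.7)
The plan is to express the $\nu$-relative entropy of $\mu_n^3$ via the Abramov--Rokhlin formula, lift the resulting conditional entropy to $Z_n$ through $\mathrm{proj}_{X_n}^3$, and then enlarge the conditioning $\sigma$-algebra in a way that exposes the concavity of $\psi(t)=-t\log t$ precisely at coincidence times. Let $\beta_n$ be the partition of $X_n$ into $1$-cylinders at coordinate $0$ and let $\mathcal B^-_{X_n}=\bigvee_{i\geq 1}\sigma_{X_n}^{-i}\beta_n$. Then
$$h_{\mu_n^3}(\sigma_{X_n}) - h_\nu(\sigma_Y)=H_{\mu_n^3}\bigl(\beta_n \,\bigm|\, \mathcal B^-_{X_n}\vee \pi_n^{-1}\mathcal B(Y)\bigr).$$
Pulling this back through $\mathrm{proj}_{X_n}^3$ and using \eqref{eq:pin} (so that $(\mathrm{proj}_{X_n}^3)^{-1}\pi_n^{-1}\mathcal B(Y)=\mathcal B_{0,n}$), the right-hand side becomes a conditional entropy on $(Z_n,\eta_n)$ with respect to a $\sigma$-algebra $\mathcal F^3$ determined by the past of $\mathrm{proj}_{X_n}^3$ and $\mathcal B_{0,n}$.

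The next step is to introduce
$$\mathcal G':=\mathcal B_{1,n}^-\vee \mathcal B_{2,n}^-\vee \mathcal B_{0,n}\vee \sigma(r_i:\,i\leq -2)$$
and verify the key inclusion $\mathcal F^3\subset \mathcal G'$. Because $N_k(u,v)$ is by definition the latest coincidence time \emph{strictly} less than $k$, one has $N_k\leq -2$ for every $k\leq -1$; hence each coordinate $(\mathrm{proj}_{X_n}^3(z))_k$ with $k\leq -1$ is a function of $u_k,v_k$ and a single $r_m$ with $m\leq -2$, and in particular is $\mathcal G'$-measurable. Monotonicity of conditional entropy under enlarging the conditioning algebra then gives
$$h_{\mu_n^3}(\sigma_{X_n})-h_\nu(\sigma_Y)\ \geq\ H_{\eta_n}\bigl((\mathrm{proj}_{X_n}^3)^{-1}\beta_n \,\bigm|\, \mathcal G'\bigr).$$

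The third step is to evaluate this lower bound by splitting over $S_n$ and $S_n^c$. On $S_n$ one has $N_0=-1$; since $r_{-1}$ is $\mathrm{Ber}(1/2)$ independently of $\mathcal G'$ under $\eta_n$, and since the definition $\widehat\mu_n=\mu_n^1\otimes_\nu \mu_n^2$ makes $u_0$ and $v_0$ conditionally independent of the remaining pasts given $\mathcal B_{0,n}$, one computes
$$\mathbb P_{\eta_n}\bigl((\mathrm{proj}_{X_n}^3(z))_0=j\,\bigm|\,\mathcal G'\bigr)=\tfrac12 A_n^j(z)+\tfrac12 B_n^j(z).$$
On $S_n^c$ the index $N_0\leq -2$ is $\mathcal G'$-measurable, so the same conditional probability equals $A_n^j(z)$ when $r_{N_0}=0$ and $B_n^j(z)$ when $r_{N_0}=1$. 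Substituting into $H_{\eta_n}(\cdot\mid\mathcal G')=\int\sum_j \psi(\cdot)\,d\eta_n$, using the $\mathrm{Ber}(1/2)$ symmetry of $r_{N_0}$ given $(u,v)$ to symmetrise the $S_n^c$ contribution, and extracting the concavity defect $\Xi_n$ on $S_n$, produces
$$H_{\eta_n}\bigl((\mathrm{proj}_{X_n}^3)^{-1}\beta_n\,\bigm|\,\mathcal G'\bigr)=\tfrac12\!\int_{Z_n}\!\sum_j\psi(A_n^j)\,d\eta_n+\tfrac12\!\int_{Z_n}\!\sum_j\psi(B_n^j)\,d\eta_n+\int_{S_n}\!\Xi_n\,d\eta_n.$$
A change of variables through $(\mathrm{proj}_{X_n}^i)_*\eta_n=\mu_n^i$ identifies each of the first two integrals with $h_{\mu_n^i}(\sigma_{X_n})-h_\nu(\sigma_Y)$, delivering the inequality.

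The main obstacle I anticipate is the measurability and independence bookkeeping: first, the verification that $\mathcal F^3\subset\mathcal G'$ (equivalently, that the \emph{strict} inequality in $N_k(u,v):=\sup\{m<k:\,u_m=v_m\}$ prevents any coordinate of $\mathrm{proj}_{X_n}^3$ of index $\leq -1$ from depending on $r_{-1}$); and second, the identification $\mathbb P_{\eta_n}(u_0=j\mid\mathcal G')=A_n^j$, which rests on combining $r\perp(u,v)$ under $\eta_n$ with the conditional independence $u\perp v \,|\,\mathcal B_{0,n}$ built into the $\nu$-relatively independent joining.
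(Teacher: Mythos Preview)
Your proof plan is correct and is essentially the argument of Petersen--Quas--Shin, which is exactly what the paper invokes: Theorem~\ref{thm:quasin} is quoted from \cite{Quas} (specifically from the proof of their Theorem~1, Equation~(45)) and is not re-proved in the paper. The steps you outline---Abramov--Rokhlin on $X_n$, lifting to $(Z_n,\eta_n)$, enlarging the conditioning $\sigma$-algebra to $\mathcal G'=\mathcal B_{1,n}^-\vee\mathcal B_{2,n}^-\vee\mathcal B_{0,n}\vee\sigma(r_i:i\le-2)$, then splitting into $S_n$ (where $N_0=-1$ and $r_{-1}\perp\mathcal G'$ produces the average $\tfrac12(A_n^j+B_n^j)$) and $S_n^c$ (where $r_{N_0}$ is $\mathcal G'$-measurable)---match the original construction, and your identification $\mathbb E_{\eta_n}(\mathbbm 1_{u_0=j}\mid\mathcal G')=A_n^j$ via the conditional-independence property of the relatively independent joining is the right justification.
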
 

\begin{remark}
    Observe that by \eqref{eq:pin} one has 
$$(\pi_n\circ\proj_{X_n}^1)^{-1}(\mathcal B(Y)) = (\pi_n\circ\proj_{X_n}^2)^{-1}(\mathcal B(Y)) = (\pi_n\circ\proj_{X_n}^3)^{-1}(\mathcal B(Y))\ (\mathrm{mod }\ \eta_n).$$
Thus, we may alternatively define $\mathcal B_{0,n}:=(\pi_n\circ\proj_{X_n}^i)^{-1}(\mathcal B(Y))$ in \eqref{eq:B0} using any $i\in\{1,2,3\}$, and the conclusions of Theorem \ref{thm:quasin} hold.
\end{remark}

\begin{diagram}[!ht]
\[\begin{tikzcd}
	{X\times X} & {Z=X\times  X\times\{0,1\}^\mathbb Z} && X \\
	&&&& Y \\
	X & {Z_n=X_n\times X_n\times\{0,1\}^\mathbb Z} && {X_n}
	\arrow["{p_X^i}"', from=1-1, to=3-1]
	\arrow["{\proj_X^i}", from=1-2, to=1-4]
	\arrow["{\proj_{Z_n}}"', from=1-2, to=3-2]
	\arrow["\pi", from=1-4, to=2-5]
	\arrow["{\proj_{X_n}}"', from=1-4, to=3-4]
	\arrow["{\proj_{X_n}^i}", from=3-2, to=3-4]
	\arrow["{\pi_n}"', from=3-4, to=2-5]
\end{tikzcd}\]
\caption{Diagram of the maps $p_X^i,\proj_{Z_n}$, $\proj_{X_n}$, $\pi$, $\pi_n$, $\proj_{X}^i$ and $\proj^i_{X_n}$ with $i\in\{1,2,3\}$. The right diagram commutes, and all maps are semi-conjugacies between the shift maps on the respective spaces.}\label{diagram2}
\end{diagram}

\begin{proof}[Sketch of the proof of Theorem \ref{thm:quasin}]

We follow closely \cite[Proof of Theorem 1]{Quas}. Let $\mathcal P$ be the partition into $0$ cylinders of $X_n$ and $\mathcal P_i = (\proj_{X_n}^i)^{-1}\mathcal P$ for $i\in\{1,2,3\}$. For simplicity of the notation, since $n$ is fixed we write $\mathcal B_1^- := \mathcal B_{1,n}^-, \mathcal B_2^- := \mathcal B_{2,n}^-,$
$$\mathcal B_3^- = \mathcal B_{3,n}^- = \sigma\left(\left\{z\in Z_n; (\proj_{X_n}^3(z))_{-m}=j\right\}; j\in \mathcal A_{X_n}\ \text{and }m\in\mathbb N\right)  $$
and $\mathcal B_{0,n} := \mathcal B_0.$ 

First of all, for each $i\in\{1,2,3\}$ we have that (see \cite[Equation (33)]{Quas})
$$h_{\mu_n^i}(\sigma_{X_n}) = H_{\eta_n}\left(\mathcal P_i\mid \mathcal B_i^-\vee \mathcal B_0\right) + h_\nu(Y).  $$
By definition,
\begin{align}
 H_{\eta_n}(\mathcal P_i \mid \mathcal B_i^- \vee \mathcal B_0) &= \int_{Z_n} -\sum_{j\in\mathcal A_{X_n}} \mathbbm 1_{[j]_0}\circ \proj_{X_n}^i  \log \left(\mathbb E_{\eta_n}\left[\mathbbm 1_{[j]_0}\circ \proj_{X_n}^i  \mid \mathcal B_i^-\vee \mathcal B_0\right]  \right) \d \eta_n\nonumber\\
&\hspace{-2cm} =  \int_{Z_n}-\sum_{j\in\mathcal A_{X_n}} \mathbb E_{\eta_n}\left[\mathbbm 1_{[j]_0}\circ \proj_{X_n}^i\mid \mathcal B_i^-\vee \mathcal B_0\right]  \log\left( \mathbb E_{\eta_n}\left[\mathbbm 1_{[j]_0}\circ \proj_{X_n}^i \mid \mathcal B_i^-\vee \mathcal B_0\right]\right)   \d \eta_n\nonumber\\
  &= \int_{Z_n}\sum_{j\in\mathcal A_{X_n}}\psi\left(\mathbb E_{\eta_n}\left[\mathbbm 1_{[j]_0}\circ \proj_{X_n}^i \mid \mathcal B_i^-\vee \mathcal B_0\right]\right)  \d \eta_n,\label{eq:eqentr}
\end{align}
where $\psi(t) = -t \log (t).$

Repeating verbatim the computation in \cite[Proof of identity (35)]{Quas}, we obtain that for any $j\in\mathcal A_{X_n}$ the function $$g_n^j({z}):= \mathbb{E}_{\eta_n}\big(\mathbf{1}_{[j]_0}\circ \proj_{X_n}^3\mid \mathcal{B}_1^-\vee \mathcal{B}_2^-\vee \mathcal{B}_3^-\vee \mathcal{B}_0\big)({z}),$$ 
where ${z}= ({u},{v},{r}),$ satisfies

\begin{itemize}
\item If $(\proj_{X_n}^3(z))_{-1}=u_{-1}\neq v_{-1}$, then
\[
g_n^j(z)
=
\mathbb{E}_{\eta_n}\big(\mathbf{1}_{[j]_0}\circ \proj_{X_n}^1\mid \mathcal{B}_1^-\vee \mathcal{B}_0\big)(z)= A^j_n(z).
\]

\item If $(\proj_{X_n}^3(z))_{-1}=v_{-1}\neq u_{-1}$, then
\[
g_n^j(z)
=
\mathbb{E}_{\eta_n}\big(\mathbf{1}_{[j]_0}\circ \proj_{X_n}^2\mid \mathcal{B}_2^-\vee \mathcal{B}_0\big)(z)=B^j_n(z).
\]

\item If $u_{-1}=v_{-1}=a$, then
\begin{align*}
g_n^j(z)
&=
\frac12\,\mathbb{E}_{\eta_n}\big(\mathbf{1}_{[j]_0}\circ \proj_{X_n}^1\mid \mathcal{B}_1^-\vee \mathcal{B}_0\big)(z)
+\frac12\,\mathbb{E}_{\eta_n}\big(\mathbf{1}_{[j]_0}\circ \proj_{X_n}^2\mid \mathcal{B}_2^-\vee \mathcal{B}_0\big)(z)\\
&=\frac{1}{2}\left(A_{n}^{j}(z)+B_{n}^{j}(z)\right)
\end{align*}

\item If $u_{-1}=v_{-1}\neq a$, then
$$g_n^j(z)
=
\mathbb{E}_{\eta_n}\big(\mathbf{1}_{[j]_0}\circ \proj_{X_n}^1\mid \mathcal{B}_1^-\vee \mathcal{B}_0\big)(z)\ \text{when }r_{N_{-1}^n(u,v) }=0$$
and
$$g_n^j(z) = \mathbb{E}_{\eta_n}\big(\mathbf{1}_{[j]_0}\circ \proj_{X_n}^2\mid \mathcal{B}_2^-\vee \mathcal{B}_0\big)(z)\ \text{when } r_{N_{-1}^n(u,v)} = 1.$$
\end{itemize}
From \eqref{eq:eqentr} we obtain that
\begin{align}
    H_{\eta_n} (\mathcal P_3 \mid \mathcal B_3^-\vee \mathcal B_0)&\geq  H_{\eta_n} (\mathcal P_3 \mid \mathcal B_1^-\vee B_2^-\vee B_3^- \vee\mathcal B_0)\\
    &= \int_{Z_n} \sum_{j\in \mathcal A_{X_n}} \psi \circ g_n^j(z)\, \d \eta_n(z) \label{eq:des0}
\end{align}
We define the sets
\begin{itemize}
    \item $S_1 = \{z=(u,v,r)\in Z_n;\, (\proj_{X_n}^3(z))_{-1} = u_{-1} \neq v_{-1}  \}$;
    \item $S_2 = \{z=(u,v,r)\in Z_n;\, (\proj_{X_n}^3(z))_{-1} = v_{-1} \neq u_{-1}  \}$;
    \item $S_3^{(a)} = \{z=(u,v,r)\in Z_n; u_{-1} = v_{-1} = a  \}$; and
    \item $S_3^{(\neq a)} = \{z=(u,v,r)\in Z_n; u_{-1} = v_{-1} \neq a \}$.
\end{itemize}

As in \cite[Equations (41), (42) and (43)]{Quas} we have that
\begin{align}
\int_{S_1\cup S_2} \psi \circ g_n^j(z) \d \eta_n(z) =& \frac{1}{2} \int_{S_1\cup S_2}   \psi\big(\mathbb{E}_{\eta_n}\big(\mathbf{1}_{[j]_0}\circ \proj_{X_n}^1\mid \mathcal{B}_1^-\vee \mathcal{B}_0)\big)(z)\ \d \eta_n(z)\nonumber\\
&+\frac{1}{2} \int_{S_1\cup S_2}    \psi\big( \mathbb{E}_{\eta_n}\big(\mathbf{1}_{[j]_0}\circ \proj_{X_n}^2\mid \mathcal{B}_2^-\vee \mathcal{B}_0)\big)(z) \  \d \eta_n(z) \label{eq:des1}
\end{align}
Since $\eta_n =\mu_n^1\otimes_\nu \mu_n^2 \otimes \zeta$ and $\zeta$ is $\left(\frac{1}{2},\frac{1}{2}\right)$-Bernoulli we have that 
$$\zeta( r_{N_{-1}^n(u,v)} =0) = \zeta(r_{N_{-1}^n(u,v)} =1)=\frac{1}{2}$$
and we obtain that
\begin{align}
\int_{
S_3^{(\neq a)}} \psi \circ g_n^j\, \d \eta_n =& \frac{1}{2} \int_{
S_3^{(\neq a)}}   \psi\circ \mathbb{E}_{\eta_n}\big(\mathbf{1}_{[j]_0}\circ \proj_{X_n}^1\mid \mathcal{B}_1^-\vee \mathcal{B}_0\big)\, \d\eta_n\nonumber\\
&+\frac{1}{2} \int_{
S_3^{(\neq a)}}    \psi\circ \mathbb{E}_{\eta_n}\big(\mathbf{1}_{[j]_0}\circ \proj_{X_n}^2\mid \mathcal{B}_2^-\vee \mathcal{B}_0\big)\, \d\eta_n.\label{eq:des2}
\end{align}
Furthermore,
\begin{align}
\int_{S_3^{(a)}} \psi \circ g_n^j(z)\, \d \eta_n&=  \int_{
S_3^{(a)}}   \psi\left(\frac{1}{2}\left(A_{n}^{j} +B_{n}^{j}\right) \right)\,\d \eta_n\\
&=  \int_{
S_3^{(a)}} \frac12 \psi\circ  A_{n}^{j} \, \d \eta_n + \int_{
S_3^{(a)}} \frac12  \psi\circ B_{n}^{j} \d \eta_n  +\int_{
S_3^{(a)}}  \Xi_n^j\,\d\eta_n .\label{eq:des3}
\end{align}

Combining \eqref{eq:des0}, \eqref{eq:des1}, \eqref{eq:des2} and \eqref{eq:des3} we obtain that
\begin{align*}
    h_{\mu_3^n}(\sigma_{X_n}\mid\nu) =& H_{\eta_n}(\mathcal P_3 \mid \mathcal B_3^- \vee \mathcal B_0) \geq \int_{Z_n} \sum_{j\in \mathcal A_{X_n}} \psi \circ g_n^j\, \d \eta_n \\
    =& \int_{S_1\cup S_3 \cup S_3^{(\neq a)}} \sum_{j\in \mathcal A_{X_n}} \psi \circ g_n^j \d \eta_n + \int_{S_3^{(a)}} \sum_{j\in \mathcal A_{X_n}} \psi \circ g_n^j\, \d \eta_n\\
    \geq &\frac{1}{2} \int_{
X_n}   \psi\circ \mathbb{E}_{\eta_n}\big(\mathbf{1}_{[j]_0}\circ \proj_{X_n}^1\mid \mathcal{B}_1^-\vee \mathcal{B}_0\big) \,\d \eta_n\nonumber\\
 &+\frac{1}{2} \int_{X_n}    \psi\circ \mathbb{E}_{\eta_n}\big(\mathbf{1}_{[j]_0}\circ \proj_{X_n}^2\mid \mathcal{B}_2^-\vee \mathcal{B}_0\big)\,  \d \eta_n + \sum_{j\in\mathcal A_{X_n}}\int_{S_3^{(a)}}  \Xi_n^j\, \d\eta_n\\
 &\geq \frac{ h_{\mu_1^n}(\sigma_{X_n}\mid\nu) +  h_{\mu_2^n}(\sigma_{X_n}\mid\nu)}{2} + \int_{S_3^{(a)}}  \Xi_n^{j_*} \,\d\eta_n.
\end{align*}
\end{proof}

Below we quote another result from \cite{Quas}. In \cite{Quas}, the statement is proved under the assumption that $X$ is a subshift of finite type, but the same proof works assuming that $X$ is a countable-state irreducible Markov shift and $Y$ is a subshift of finite type.

\begin{lemma}[{\cite[Lemma 3]{Quas}}]\label{lem:quas}
Consider a two-sided countable Markov shift \(X\subset \mathcal{A}_X^{\mathbb{Z}}\) with shift $\sigma_X$, a subshift $Y$ with shift $\sigma_Y$, and a \(1\)-block map \(\pi:X\to Y\) such that $\nu(\pi(X)) =1$ and $\supp(\nu) = Y$.
Fix an ergodic $\sigma_Y$-invariant probability $\nu$ on $Y$ and choose ergodic $\mu_1,\mu_2\in \pi^{-1}\{\nu\}$. Denote by $\widehat{\mu}:=\mu_1\otimes_{\nu}\mu_2$ their relatively independent joining on $X\times X$, and by $p_{X}^1,p_{X}^2:X\times X\to X$ the coordinate projections.

Put $\mathcal{F}^{-}:=\sigma(x_k;\,k<0)$ for the past \(\sigma\)-algebra on \(X\), and set
$\mathcal{F}^{-}_i:=(p^i_X)^{-1}(\mathcal{F}^{-})$, $i\in\{1,2\}$ on $X\times X$.  Write \(\mathcal Q_{Y}:=\sigma( [y_0]_0;\, y_0\in\mathcal A_Y)\) and define
\[
\mathcal{F}_0:=(\pi\circ p_{X}^1)^{-1}(\mathcal{Q}_{Y})=(\pi\circ p_{X}^2)^{-1}(\mathcal Q_{Y})\  (\mathrm{mod}\ \widehat{\mu}).
\]

Assume that $\widetilde{S}:=\{(u,v)\in X\times X;\ u_{-1}=v_{-1}\}$ has positive \(\widehat{\mu}\)-measure, and 
\[
\mathbb{E}_{\widehat{\mu}}\bigl(\mathbbm{1}_{[j]_0}\circ \proj_{X}^1 \,\bigm|\, \mathcal{F}^{-}_1 \vee \mathcal{F}_0\bigr)
=
\mathbb{E}_{\widehat{\mu}}\bigl(\mathbbm{1}_{[j]_0}\circ \proj_{X}^2 \,\bigm|\, \mathcal{F}^{-}_2 \vee \mathcal{F}_0\bigr)
\ \widehat{\mu}\text{-a.e. on } \widetilde{S},
\]
for every \(j\in\mathcal{A}_X\), then \(\mu_1=\mu_2\). In particular, there exists $a,b\in\mathcal A_X$ such that
\[\widehat{\mu}\left\{({u},{v})\in X\times X\,\left|\begin{aligned}
  & u_{-1}=v_{-1} = a, \text{and }\\ &\mathbb{E}_{\widehat{\mu}}\bigl(\mathbbm{1}_{[b]_0}\circ \proj_{X}^1 \,\bigm|\, \mathcal{F}^{-}_1 \vee \mathcal{F}_0\bigr)
\neq
\mathbb{E}_{\widehat{\mu}}\bigl(\mathbbm{1}_{[b]_0}\circ \proj_{X}^2\bigm|\, \mathcal{F}^{-}_2 \vee \mathcal{F}_0\bigr)
  \end{aligned} \right.\right\}>0.\]

\end{lemma}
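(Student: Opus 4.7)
My plan is to convert the hypothesis on the joint conditional expectations into a statement about the marginals $\mu_1$ and $\mu_2$, use the product structure of the relatively independent joining on the positive-measure event $\widetilde S$ to force a Markov-type one-step identity, and then upgrade that identity to equality of the two measures via stationarity and ergodicity.

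\emph{Step 1 (pass to the marginals).} Because $\widehat{\mu}=\mu_1\otimes_\nu\mu_2$ is relatively independent over $(Y,\nu)$, for any bounded measurable $h:X\to\mathbb R$,
$$
\mathbb{E}_{\widehat{\mu}}\!\left[h\circ p_X^1\,\middle|\,\mathcal F_1^-\vee\mathcal F_0\right](u,v)=\mathbb{E}_{\mu_1}\!\left[h\,\middle|\,\mathcal F^-\vee\pi^{-1}\mathcal Q_Y\right](u),
$$
and symmetrically for the second coordinate. Applying this to $h=\mathbbm{1}_{[j]_0}$ and defining $\phi_j(u):=\mathbb{E}_{\mu_1}[\mathbbm{1}_{[j]_0}\mid \mathcal F^-\vee\pi^{-1}\mathcal Q_Y](u)$ and $\psi_j(v):=\mathbb{E}_{\mu_2}[\mathbbm{1}_{[j]_0}\mid \mathcal F^-\vee\pi^{-1}\mathcal Q_Y](v)$, the hypothesis reads: for every $j\in\mathcal A_X$, $\phi_j(u)=\psi_j(v)$ for $\widehat{\mu}$-a.e.\ $(u,v)\in\widetilde S$.

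\emph{Step 2 (conditional independence forces pointwise constancy).} Disintegrate $\widehat{\mu}=\int\mu_1^y\otimes\mu_2^y\,d\nu(y)$. Since $\widehat{\mu}(\widetilde S)>0$, there exist $a\in\mathcal A_X$ and a Borel set $B\subset Y$ with $\nu(B)>0$ such that $\mu_1^y([a]_{-1})\mu_2^y([a]_{-1})>0$ for $y\in B$. For such $y$, further conditioning $\mu_1^y\otimes\mu_2^y$ on the rectangle event $\{u_{-1}=v_{-1}=a\}$ preserves independence of $u$ and $v$, so $\phi_j(u)$ and $\psi_j(v)$ become independent real-valued random variables that are a.s.\ equal. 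The only possibility is that each is a.s.\ constant on this fiber, with a common value $c_j(y,a)$. Since $\phi_j$ is $\sigma((u_k)_{k<0},\pi(u_0))$-measurable, $c_j(\cdot,a)$ can depend on $\pi(u)$ only through $(\pi(u_k))_{k\le 0}$; the same holds for $\psi_j$, with the same function $c_j$.

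\emph{Step 3 (from one-step identity to equality of measures).} Step 2 says the one-step conditional probability $\mu_i(u_0=j\mid u_{<0},\pi(u))$ equals a function $f_j(u_{-1},(\pi(u_k))_{k\le 0})$ that is common to $i=1,2$. Applying $\sigma_X$-invariance shifts this identity to every coordinate: $\mu_1(u_{k+1}=j\mid u_{\le k},\pi(u))=\mu_2(u_{k+1}=j\mid u_{\le k},\pi(u))$ for all $k\in\mathbb Z$ and $j\in\mathcal A_X$. Iterating gives equality of all finite-dimensional conditional cylinder probabilities $\mu_i(u_\ell=j_\ell,\dots,u_{\ell+n}=j_{\ell+n}\mid u_{\ell-1},\pi(u))$. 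Combining these common one-step transitions with the shared $Y$-marginal $\nu$, and using ergodicity of each $\mu_i$ to single out the unique shift-invariant solution of the transition system, one concludes $\mu_1=\mu_2$.

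The main obstacle is Step 3: the identities produced in Step 2 hold \emph{a priori} only on the intersection of the fiberwise supports of $\mu_1^y$ and $\mu_2^y$, and promoting them to full equality of measures requires combining the positivity $\widehat{\mu}(\widetilde S)>0$, the irreducibility of $X$, and the ergodicity of $\mu_1,\mu_2$ to ensure those common supports are large enough for a cylinder-matching / Kolmogorov-extension argument to pin down both measures. This is where the hypothesis $\widehat{\mu}(\widetilde S)>0$ plays its decisive role and where the most careful bookkeeping is needed.
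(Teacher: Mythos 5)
The paper does not actually prove this lemma; it quotes \cite[Lemma~3]{Quas} and asserts (just before the statement) that the proof there carries over verbatim once one allows $X$ to be a countable-state irreducible Markov shift. So your proposal has to be judged on its own merits.

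Steps~1 and 2 are in the right spirit, modulo two small imprecisions you should tidy up: (a) in Step~2 the constancy you extract depends \emph{a priori} on the full path $y$, and you then need to observe separately that $\phi_j$ is $\bigl(\mathcal F^-\vee\pi^{-1}\mathcal Q_Y\bigr)$-measurable, hence can only depend on $(y_0,a)$; (b) in Step~3 you write the common one-step function as $f_j\bigl(u_{-1},(\pi(u_k))_{k\le 0}\bigr)$, but since you have only conditioned on $y_0$, not the full past of $y$, the conditioning $\sigma$-algebra is $\sigma\bigl((u_k)_{k<0},\,(\pi u)_0\bigr)$ and the notation is inconsistent.

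The substantive problem is Step~3, and you have correctly diagnosed it but not solved it. What Step~2 delivers is agreement of the one-step conditional probabilities \emph{only on the intersection of the fiberwise supports} of $\mu_1^y$ and $\mu_2^y$. Nothing in Steps~1--2 shows that this intersection carries full mass for either marginal. Your closing move --- regard the common values $c_j(y_0,a)$ as a transition system and appeal to ergodicity of the $\mu_i$ together with irreducibility of $X$ to single out a unique invariant lift of $\nu$ --- is not a valid argument in this generality: irreducibility of $X$ says nothing about the supports of $\mu_1,\mu_2$ (which need not be all of $X$), the ``transition system'' is only defined on a partial alphabet, and a two-sided countable-state chain with given transition kernel and given $Y$-marginal can easily carry more than one ergodic invariant measure. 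You also invoke irreducibility of $X$, which is not a hypothesis of the lemma as stated. The proof in \cite{Quas} closes the gap by a qualitatively different argument: it works with the joining $\widehat\mu$ itself and exploits the positivity $\widehat\mu(\widetilde S)>0$ together with a martingale/coincidence-time argument to show directly that the two coordinates of the joining agree, rather than reconstructing a Markov transition kernel and appealing to uniqueness of its stationary measure. As written, your Step~3 is a genuine gap.
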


\begin{proof}[Proof of Theorem \ref{thm:dream}] 

Let $a,b\in\mathcal A_X$ be as given at the end of Lemma \ref{lem:quas}, and let $n\in\mathbb N$ be large enough such that $[a]_0,[b]_0\in \alpha_n$; in this way we have that $a,b\in\mathcal A_{X_n}$.
We assume the notations of Theorem \ref{thm:quasin} and Lemma \ref{lem:quas}. Suppose, for contradiction, that the theorem is false. Then there exist two ergodic measures of maximal $\nu$-relative entropy, $\mu^1$ and $\mu^2$, such that
$$
\mu^1\otimes_\nu \mu^2\left(\{(u,v)\in X\times X;\, u_{0}=v_{0}\}\right)>0.
$$

Let $\eta:= \mu^1\otimes_\nu \mu^2 \otimes \zeta$, recalling that $\zeta$ is the $\mathrm{Bernoulli}(\frac{1}{2},\frac{1}{2})$  probability measure on $\{0,1\}^{\mathbb Z}$. 
Define $\mu_3=(\mathrm{proj}^3_X)_*\eta$, where for \(k\in\mathbb Z\) we set
$$N_k(u,v):=\sup\{m<k;\,u_m=v_m=a\}\ \text{with}\ \sup\varnothing=-\infty.$$
Recalling the definition of $r_{-\infty}$ in \eqref{eq:rinfty} we define
\[
\bigl(\mathrm{proj}^3_X(u,v,r)\bigr)_k:=\begin{cases}
u_k,&\text{if } r_{N_k(u,v)}=0,\\
v_k,&\text{if } r_{N_k(u,v)}=1.
\end{cases}
\]
Observe that $\proj_X^3$ is the non-truncated analogue of the  $\proj_{X_n}^3$ function in Section~\ref{sec:construction}, obtained by replacing \(X_n\) with \(X\).
We will show that
$$
h_{\mu^3}(X) > h_{\mu^1}(\sigma_X)=h_{\mu^2}(\sigma_X) =  h_{\mathrm{top}}(\sigma_{X}\mid \nu).$$
which yields a contradiction. Note that we cannot directly apply the result of Lemma \ref{lem:quas} since $(X,\sigma_X)$ is not necessarily a subshift of finite type.

We define $\mu^i_n = (\proj_{X_n})_*\mu^i$ for $i\in\{1,2\}$ and we recall that $\eta_n = \mu_1^n\otimes_\nu \mu_2^n \otimes \zeta$. Let $\mu_n^3$ be the $\sigma_{X_n}$-invariant probability measure constructed in Section \ref{sec:construction}. Observe that by our choice of $n\in\mathbb N$ the symbol $a\in\mathcal A_X$ also lies in the alphabet of its truncation $\mathcal A_{X_n}$.

We claim that $(\proj_{X_n})_* \mu^3 = \mu_n^3,$
which is equivalent (from the definitions of $\mu^3$ and $\mu_n^3$) to showing that
\begin{align}
(\proj_{X_n})_*  (\proj_{X}^3)_* \left(\mu^1\otimes_\nu \mu^2 \otimes \zeta\right) &= (\proj_{X_n}^3)_*   \left[ (\proj_{X_n})_*\mu^1\otimes_\nu (\proj_{X_n})_*(\mu^2) \otimes \zeta\right]\\
&=(\proj_{X_n}^3)_*   \left( \mu^1_n\otimes_\nu \mu^2_n \otimes \zeta\right) \label{eq:Condition1}
\end{align}
To show \eqref{eq:Condition1}, observe from \eqref{eq:Nn} and the fact that $a\in\mathcal A_{X_n}$ we have that
\begin{align}
    N_k(u,v) &= \sup\left\{m<k; u_m=v_m =a\right\} \nonumber= \sup\left\{m<k; \proj_{X_n}(u)_m=\proj_{X_n}(v)_m =a\right\}\\
    &= N_k^n(\proj_{X_n}(u),\proj_{X_n}(v)) \label{eq:Nk}
\end{align}
From \eqref{eq:Nk}, we obtain that for every $k\in\mathbb Z$
\begin{align*}\left(\proj_{X_n} \circ \proj_X^3(u,v ,r)\right)_k &=  
  \begin{cases}
    \proj_{X_n}(u)_k, & \text{if } r_{N_k(u,v)} = 0,\\[2pt]
     \proj_{X_n}(v)_k, & \text{if } r_{N_k(u,v)} = 1.
  \end{cases}\\
  &= \begin{cases}
    \proj_{X_n}(u)_k, & \text{if } r_{N_k^n(\proj_{X_n}(u),\proj_{X_n}(v))} = 0,\\[2pt]
     \proj_{X_n}(v)_k, & \text{if } r_{N_k^n(\proj_{X_n}(u),\proj_{X_n}(v))} = 1.
  \end{cases}\\
  &= \proj_{X_n}^3 (\proj_{X_n}(u), \proj_{X_n}(v), r)
  \end{align*}
Which implies \eqref{eq:Condition1} and therefore that $\proj_{X_n}\mu^3 = \mu_n^3$

In this way, we have that
$$
h_{\mu^i}(X)=\lim_{n\to\infty} h_{\mu^i_n}(X_n)\ \text{for }i\in\{1,2,3\}, \ \text{where}\ \mu_n^i := (\proj_{X_n})_*\mu^i.$$ 
By Lemma~\ref{lem:quas} there exists $b\in\mathcal A_X$ (which also lies in $\mathcal A_{X_n}$ for $n$ large enough) such that, on a $\mu^1\otimes_\nu \mu^2$-positive subset of $\{(u,v)\in X\times X:u_{-1}=v_{-1}=a\}$,
$$
\mathbb{E}_{\mu^1\otimes_\nu \mu^2}\bigl(\mathbbm{1}_{[b]_0}\circ p_{X}^1 \,\bigm|\, \mathcal{F}^{-}_1 \vee \mathcal{B}_0\bigr)
\neq
\mathbb{E}_{\mu^1\otimes_\nu \mu^2}\bigl(\mathbbm{1}_{[b]_0}\circ p_{X}^2 \,\bigm|\, \mathcal{F}^{-}_2 \vee \mathcal{B}_0\bigr).
$$
Hence
\begin{equation}\label{eq:ineq1}
\mathbb{E}_{\eta}\bigl(\mathbbm{1}_{[b]_0}\circ \proj_{X}^1 \,\bigm|\, \mathcal{B}^{-}_1 \vee \mathcal{B}_0\bigr)
\neq
\mathbb{E}_{\eta}\bigl(\mathbbm{1}_{[b]_0}\circ \proj_{X}^2 \,\bigm|\, \mathcal{B}^{-}_2 \vee \mathcal{B}_0\bigr)
\end{equation}
on an $\eta$-positive measure set contained in $$S^{(a)}:=\{\,z\in Z;\ (\proj_{X}^1 (z))_{-1}=(\proj_{X}^2 (z))_{-1}=a\}.$$

Let $\mathrm{proj}_{Z_n}:Z\to Z_n$ be the projection
\begin{align*}
    \mathrm{proj}_{Z_n}:Z&\to Z_n\\
    z =(x,y,r)&\mapsto (\proj_{X_n}(x),\proj_{X_n}(y),r),
\end{align*} 
and observe that $$\proj_{X_n}^i\circ\mathrm{proj}_{Z_n}=\proj_{X_n}\circ \proj_{X}^i\ \text{for any }i\in \{1,2,3\}.$$ For each $i\in\{1,2\}$ the sequence of $\sigma$-algebras
$$
(\mathrm{proj}_{Z_n})^{-1}\bigl(\mathcal B_{i,n}^{-}\vee \mathcal B_{0,n}\bigr),\qquad n\ge1,
$$
is increasing and
$$
\sigma\left(\bigvee_{n\ge1}(\mathrm{proj}_{Z_n})^{-1}\bigl(\mathcal B_{i,n}^{-}\vee \mathcal B_{0,n}\bigr)\right)=\mathcal B_i^{-}\vee \mathcal B_0.
$$
By Doob’s martingale convergence theorem,
\begin{equation}\label{eq:eq2}
\mathbb{E}_{\eta}\bigl(\mathbbm{1}_{[b]_0}\circ  \proj_{X}^i \,\bigm|\, \mathcal{B}^{-}_i \vee \mathcal{B}_0\bigr)
=
\lim_{n\to\infty}
\mathbb{E}_{\eta}\left(\mathbbm{1}_{[b]_0}\circ \proj_{X}^i \,\bigm|\,
(\mathrm{proj}_{Z_n})^{-1}(\mathcal B_{i,n}^{-}\vee \mathcal B_{0,n})\right)
\end{equation}
in $L^1(Z,\eta)$ and $\eta$-a.s. Moreover, $$\mathrm{proj}_{Z_n}(S^{(a)})\subset S_n^{(a)}:=\{\,z\in Z_n;\ (\proj_{X_n}^1 (z))_{-1}=(\proj_{X_n}^2 (z))_{-1}=a\,\}.$$

With $\psi(t):=-t\log t$,
\begin{align}
    &\int_{S_n^{(a)}}
\psi\left(\frac{A_n^{b}+B_n^{b}}{2}\right)
-\frac{1}{2}\psi(A_n^{b})-\frac{1}{2}\psi(B_n^{b})\, 
\mathrm d\eta_n\nonumber
\\&=
\int_{{\proj^{-1}_{Z_n}}(S_n^{(a)})}
\psi\left(\frac{A_n^{b}\circ \mathrm{proj}_{Z_n}+B_n^{b}\circ \mathrm{proj}_{Z_n}}{2}\right)
-\frac{1}{2}\psi(A_n^{b}\circ \mathrm{proj}_{Z_n})
-\frac{1}{2}\psi(B_n^{b}\circ \mathrm{proj}_{Z_n})\, 
\mathrm d\eta\nonumber
\\&\geq
\int_{S^{(a)}}
\psi\left(\frac{A_n^{b}\circ \mathrm{proj}_{Z_n}+B_n^{b}\circ \mathrm{proj}_{Z_n}}{2}\right)
-\frac{1}{2}\psi(A_n^{b}\circ \mathrm{proj}_{Z_n})
-\frac{1}{2}\psi(B_n^{b}\circ \mathrm{proj}_{Z_n})\, 
\mathrm d\eta,\label{eq:ineq4}
\end{align}
where $A_n^b,B_n^b$ are as in Theorem~\ref{thm:quasin} taking $j_*=b$. Observe that
\begin{align}
    A_n^{b}\circ \mathrm{proj}_{Z_n} &= \mathbb{E}_{\eta_n}\big(\mathbbm{1}_{[b]_0}\circ \proj_{X_n}^1 \,\big|\, \mathcal{B}_{1,n}^- \vee \mathcal{B}_{0,n}\big)\circ \mathrm{proj}_{Z_n} \nonumber \\
    &= \mathbb E_\eta \left(\mathbbm{1}_{[b]_0}\circ \proj_{X}^1 \,\big|\, (\mathrm{proj}_{Z_n})^{-1}\left(\mathcal{B}_{1,n}^- \vee \mathcal{B}_{0,n}\right)\right) \label{eq:3} 
\end{align}
Similarly, we obtain that 
  \begin{align}
      B_n^{b}\circ \mathrm{proj}_{Z_n} =\mathbb E_\eta \left(\mathbbm{1}_{[b]_0}\circ \proj_{X}^2 \,\big|\, (\mathrm{proj}_{Z_n})^{-1}\left(\mathcal{B}_{2,n}^- \vee \mathcal{B}_{0,n}\right)\right).\label{eq:ineq3}
  \end{align}
Combining \eqref{eq:ineq4} with \eqref{eq:3}, \eqref{eq:ineq3}, \eqref{eq:ineq1}, and \eqref{eq:eq2} yields
\begin{equation}\label{eq:key}
\lim_{n\to\infty}
\int_{S_n^{(a)}}\left[
\psi\left(\frac{A_n^{b}+B_n^{b}}{2}\right)
-\frac{1}{2}\psi(A_n^{b})-\frac{1}{2}\psi(B_n^{b})
\right]\mathrm d\eta_n >0.
\end{equation}

Finally, Theorem~\ref{thm:quasin} together with \eqref{eq:key} gives
$$
\begin{aligned}
h_{\mu^3}(\sigma_X\mid \nu)
&=\lim_{n\to\infty} h_{\mu^3_n}(\sigma_{X_n}\mid \nu)\\
&\geq \lim_{n\to\infty}\left[
\frac{h_{\mu^1_n}(\sigma_{X_n}\mid \nu)+h_{\mu^2_n}(\sigma_{X_n})}{2}
+\int_{S_n^{(a)}}\Xi_n(z)\,\eta_n(\mathrm dz)
\right]\\
&\geq \lim_{n\to\infty}\left[
\frac{h_{\mu^1_n}(\sigma_{X_n})+h_{\mu^2_n}(\sigma_{X_n})}{2}
+\int_{S_n^{(a)}}
\psi\left(\frac{A_n^{b}+B_n^{b}}{2}\right)
-\frac{1}{2}\psi(A_n^{b})-\frac{1}{2}\psi(B_n^{b})
\mathrm d\eta_n
\right]\\
&> \frac{h_{\mu^1}(\sigma_X\mid \nu)+h_{\mu^2}(\sigma_X\mid \nu)}{2}=h_{\mu^1}(\sigma_X\mid \nu).
\end{aligned}
$$
This contradicts  $h_{\mu^1}(\sigma_X\mid \mathbb \nu)= h_{\mathrm{top}}(\sigma_X\mid \nu)$, since $\pi_*\mu^3=\nu$.

\end{proof}

\section{Proofs of Theorem~A, Corollary~B and Proposition \ref{prop:example}}
\label{sec:5}

Before proving Theorem~\ref{thm:A}, we recall a result from the literature and prove a lemma. The proof of the Lemma below was sketched in Step~5 of the proof of Theorem~\ref{thm:1block}.

\begin{lemma}[{\cite[Proposition 13.2 and Step~2 of its proof]{Sarig2013}}] \label{lem:lift} Let $\Theta$ be as in Theorem \ref{thm:A} and fix $\chi>0$. Consider $\Sigma_s$ and $\pi_s:\Sigma_s\to \Omega\times M$ as in Theorem \ref{thm:1block}. Then, for each $\chi$-hyperbolic $\mu\in \mathcal M_e(\Theta)$
\begin{enumerate}
    \item[(i)]  there exists  $\widehat{\mu}\in M_e(\sigma_s)$  such that $(\pi_s)_*\widehat{\mu} = \mu$;
    \item[(ii)] $h_{\widehat{\mu}}(\sigma_s) = h_{\mu}(\Theta);$ and
    \item[(iii)] there exists $N_\mu$ such that $\# \pi_{s}^{-1}(\omega,x) = N_{\mu}$ for $\mu$-almost every $(\omega,x)\in \Omega\times M$.
\end{enumerate}
\end{lemma}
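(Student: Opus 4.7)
The plan is to address the three items in the order (i), (iii), (ii), exploiting the refined coding of Theorem \ref{thm:1block} and the fact that $\chi$-hyperbolicity places the support of $\mu$ inside $\mathrm{NUH}'(\Theta)$ by Theorem \ref{thm:katok}, hence inside $\pi_s(\Sigma_s^{\#})$ by Theorem \ref{thm:1block} (2).

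For (i), I would first apply Theorem \ref{thm:BCS} (4) to $\mu$ to obtain a $\sigma$-invariant probability $\widetilde{\mu}$ on the underlying Sarig coding $\Sigma$ with $\pi_{*}\widetilde{\mu}=\mu$; passing to an ergodic component, I may assume $\widetilde{\mu}$ ergodic and still projecting to $\mu$. The remaining task is to lift $\widetilde{\mu}$ from $\Sigma$ to $\Sigma_s$ along the $1$-block factor $\mathrm{proj}_{\Sigma}\colon\Sigma_s\to\Sigma$ constructed in Step~3 of the proof of Theorem \ref{thm:1block}. The key structural fact there is that $\#\,\mathrm{proj}_{\Sigma}^{-1}(\underline{R})\le k_{\mathscr P}$ for every $\underline{R}\in\Sigma$. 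Choosing a Borel cross-section $s\colon\Sigma\to\Sigma_s$, I would set $\widehat{\mu}_{0}:=s_{*}\widetilde{\mu}$ and form the Cesàro averages $\widehat{\mu}_n:=\tfrac{1}{n}\sum_{k=0}^{n-1}(\sigma_s^{k})_{*}\widehat{\mu}_{0}$. Any weak-$*$ subsequential limit $\widehat{\mu}$ is $\sigma_s$-invariant and satisfies $(\pi_s)_{*}\widehat{\mu}=\mu$; decomposing into ergodic components and selecting one that projects to $\mu$ produces the required $\widehat{\mu}\in\mathcal M_e(\sigma_s)$.

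For (iii), I would consider the Borel function $N\colon(\omega,x)\mapsto \#\bigl(\pi_s^{-1}(\{(\omega,x)\})\cap \Sigma_s^{\#}\bigr)$. Because $\sigma_s$ is a homeomorphism of $\Sigma_s$ preserving $\Sigma_s^{\#}$ and $\pi_s\circ \sigma_s=\Theta\circ \pi_s$, the shift $\sigma_s$ permutes fibres bijectively, so $N\circ \Theta=N$. Ergodicity of $\mu$ then forces $N$ to be $\mu$-a.s.\ equal to a constant $N_\mu$, which is precisely (iii).

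For (ii), I would invoke the standard fact that a Borel factor map of measure-preserving systems with finite fibres preserves metric entropy: since $\widehat{\mu}(\Sigma_s^{\#})=1$ (by the construction in (i) together with Theorem \ref{thm:1block} (2)) and $\pi_s|_{\Sigma_s^{\#}}$ is finite-to-one by Theorem \ref{thm:1block} (3), the conditional entropy of $\widehat{\mu}$ over $\pi_s^{-1}\mathcal{B}(\Omega\times M)$ vanishes, yielding $h_{\widehat{\mu}}(\sigma_s)=h_\mu(\Theta)$. The main technical obstacle I anticipate is ensuring tightness of the Cesàro averages in step (i) on the only locally compact shift $\Sigma_s$; this is resolved by observing that $\widehat{\mu}_{0}$ is supported on $\mathrm{proj}_{\Sigma}^{-1}(\supp\widetilde{\mu})$ and that the uniform fibre bound $k_{\mathscr P}$ transfers tightness of $\widetilde{\mu}$ on the Polish space $\Sigma$ to $\widehat{\mu}_{0}$, and hence to all its Birkhoff averages.
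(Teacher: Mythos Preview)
Your argument is essentially correct; parts (ii) and (iii) match the standard reasoning exactly. The paper does not prove this lemma but cites Sarig's Proposition~13.2, so the comparison is with Sarig's argument.

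For part (i) you take a genuinely different route. Sarig (and the paper implicitly) uses the finite-to-one property of $\pi_s|_{\Sigma_s^{\#}}$ directly: once (iii) gives a $\mu$-a.e.\ constant fibre cardinality $N_\mu$, one defines
\[
\widehat{\mu}(E)\;=\;\int_{\Omega\times M}\frac{1}{N_\mu}\,\#\bigl(\pi_s^{-1}(\omega,x)\cap \Sigma_s^{\#}\cap E\bigr)\,\mathrm d\mu(\omega,x),
\]
which is immediately $\sigma_s$-invariant and projects to $\mu$; one then selects an ergodic component. Your approach instead routes through the intermediate Sarig coding $\Sigma$: lift $\mu$ to $\widetilde{\mu}$ on $\Sigma$ via Theorem~\ref{thm:BCS}(4), then lift along the uniformly finite-to-one $\mathrm{proj}_\Sigma$ by section plus Ces\`aro averaging. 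This is more circuitous but has the advantage that tightness is transparent: $\mathrm{proj}_\Sigma$ is proper (preimages of compact sets are compact, since the $P$-alphabet is finite), so tightness on $\Sigma$ transfers to $\Sigma_s$ exactly as you say. Sarig's route is shorter because no limiting procedure is needed at all.

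One minor point: your justification that $\widehat{\mu}(\Sigma_s^{\#})=1$ via ``the construction in (i) together with Theorem~\ref{thm:1block}(2)'' is not quite the right reason. Theorem~\ref{thm:1block}(2) gives $\mu(\pi_s(\Sigma_s^{\#}))=1$, which does not by itself force the lift to sit in $\Sigma_s^{\#}$. The correct reason is Poincar\'e recurrence: any $\sigma_s$-invariant probability measure gives full mass to $\Sigma_s^{\#}$, since almost every point returns infinitely often in both time directions to some fixed cylinder.
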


\begin{lemma}
\label{lem:count}Let $\Theta$ and $\mathbb P$ be as in Theorem \ref{thm:A}. Define 
$$
\mathcal M_{\mathrm{max}}(\Theta\mid \mathbb P) := \left\{
  \mu\in \mathcal M_e(\Theta)\middle|\;
  \begin{aligned}
  &  (\proj_\Omega)_*\mu = \mathbb P\ \text{and }\mu\ \text{is a hyperbolic}\\
  &\text{measure of }\mathbb P\text{-maximal relative entropy}
  \end{aligned}\ 
\right\},$$
If, for each $\chi>0$ and every homoclinic class $\mathcal{X}$, the set
\begin{align}
\mathcal M_{\mathrm{max}}^{\mathcal{X},\chi}(\Theta\mid \mathbb P) := \left\{
  \mu\in \mathcal M_e(\Theta)\middle|\;
  \begin{aligned}
  &  (\proj_\Omega)_*\mu = \mathbb P,\ \mu\ \text{is $\chi$-hyperbolic, $\mu(\mathcal{X})=1$, and }\\
  &\mu\ \text{is a hyperbolic measure of }\mathbb P\text{-maximal relative entropy}
  \end{aligned}\ 
\right\},\label{eq:Mchi}
\end{align}
is countable, then $\mathcal M_{\mathrm{max}}(\Theta\mid \mathbb P)$ is countable.
\end{lemma}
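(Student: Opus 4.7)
The plan is to reduce the claim to a double countable union by stratifying $\mathcal M_{\mathrm{max}}(\Theta\mid\mathbb P)$ first by a quantitative hyperbolicity threshold and then by the Borel homoclinic class that carries each ergodic measure.

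First, I would observe that since every ergodic hyperbolic measure in $\mathcal M_{\mathrm{max}}(\Theta\mid\mathbb P)$ has almost surely constant Lyapunov spectrum (Definition~\ref{def:LE}), and since being hyperbolic means the positive and negative parts of that spectrum are non-empty and bounded away from zero, each such $\mu$ is $\chi$-hyperbolic for some $\chi>0$. Consequently
\[
\mathcal M_{\mathrm{max}}(\Theta\mid\mathbb P)
=\bigcup_{n\in\mathbb N} \mathcal M_{\mathrm{max}}^{1/n}(\Theta\mid\mathbb P),
\]
where $\mathcal M_{\mathrm{max}}^{1/n}(\Theta\mid\mathbb P)$ denotes those $\mu\in\mathcal M_{\mathrm{max}}(\Theta\mid\mathbb P)$ that are $(1/n)$-hyperbolic. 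It suffices therefore to prove that each $\mathcal M_{\mathrm{max}}^{1/n}(\Theta\mid\mathbb P)$ is countable.

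Next, I would appeal to Proposition~\ref{prop:homoclinicclasses}. Part~(i) of that proposition guarantees that the collection of Borel homoclinic classes of $\Theta$ is at most countable, and part~(iii) guarantees that any ergodic hyperbolic invariant measure is carried by exactly one such class $\mathcal{X}$. Fixing $n$, this partitions $\mathcal M_{\mathrm{max}}^{1/n}(\Theta\mid\mathbb P)$ according to the (unique) Borel homoclinic class carrying each element, giving
\[
\mathcal M_{\mathrm{max}}^{1/n}(\Theta\mid\mathbb P)
=\bigcup_{\mathcal{X}} \mathcal M_{\mathrm{max}}^{\mathcal{X},1/n}(\Theta\mid\mathbb P),
\]
where $\mathcal{X}$ ranges over the (countable) set of Borel homoclinic classes and $\mathcal M_{\mathrm{max}}^{\mathcal{X},1/n}(\Theta\mid\mathbb P)$ is defined as in \eqref{eq:Mchi}.

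Finally, by the standing hypothesis of the lemma, each set $\mathcal M_{\mathrm{max}}^{\mathcal{X},1/n}(\Theta\mid\mathbb P)$ is countable. Combining the two displayed equalities exhibits $\mathcal M_{\mathrm{max}}(\Theta\mid\mathbb P)$ as a countable union (over $n$) of countable unions (over $\mathcal{X}$) of countable sets, which is countable. There is no genuine obstacle here; the argument is essentially bookkeeping, and the only point that requires care is verifying that an ergodic hyperbolic measure is $\chi$-hyperbolic for an explicit $\chi$, which follows immediately from the ergodic constancy of the Lyapunov spectrum.
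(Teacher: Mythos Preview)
Your proposal is correct and follows essentially the same approach as the paper: stratify first by the hyperbolicity threshold $\chi=1/n$ and then by the Borel homoclinic class carrying each ergodic measure, invoking Proposition~\ref{prop:homoclinicclasses} for the countability of the classes and the fact that each ergodic hyperbolic measure is carried by a unique class. The paper additionally cites Theorem~\ref{thm:katok} to justify $\mu(\mathrm{NUH}'(\Theta))=1$ before applying the partition into homoclinic classes, but this is subsumed in your direct appeal to Proposition~\ref{prop:homoclinicclasses}(iii).
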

\begin{proof}
Define $\mathcal M_{\mathrm{max}}^{\chi}(\Theta\mid \mathbb P)$ as in \eqref{eq:Mchi} but changing $\mathcal{X}$ to $\Omega\times M$. Observe that
\begin{align*}
   \mathcal M_{\mathrm{max}}(\Theta\mid \mathbb P) = \bigcup_{\chi>0} \mathcal M_{\mathrm{max}}^{\chi}(\Theta\mid \mathbb P)=  \bigcup_{n\in\mathbb N} \mathcal M_{\mathrm{max}}^{\frac{1}{n}}(\Theta\mid \mathbb P).
\end{align*}
Thus it suffices to show that each $\mathcal M_{\mathrm{max}}^{\chi}(\Theta\mid \mathbb P)$ is countable.

Fix $\chi>0$. Let $\mu\in \mathcal M_e(\Theta)$ be $\chi$-hyperbolic. By Theorem~\ref{thm:katok},
$
\mu\big(\mathrm{NUH}'(\Theta)\big)=1.$ From Proposition~\ref{prop:homoclinicclasses}, there exists a countable family of Borel homoclinic classes
$\{\mathcal{X}_i\}_{i\in\mathcal I}$ forming a pairwise disjoint, $\Theta$-invariant partition of $\mathrm{NUH}'(\Theta)$. Since $\mu$ is ergodic, there is a unique $i_0\in\mathcal I$ such that $
\mu(\mathcal{X}_{i_0})=1.$ Consequently,
$$
\mathcal M_{\mathrm{max}}^{\chi}(\Theta\mid \mathbb P)
= \bigcup\left\{\mathcal M_{\mathrm{max}}^{\mathcal{X},\chi}(\Theta\mid \mathbb P)\ ;\ \mathcal{X} \text{ is a Borel homoclinic class}\right\}
= \bigcup_{i\in\mathcal I}\mathcal M_{\mathrm{max}}^{\mathcal{X}_i,\chi}(\Theta\mid \mathbb P).
$$
By hypothesis, each $\mathcal M_{\mathrm{max}}^{\mathcal{X}_i,\chi}(\Theta\mid \mathbb P)$ is countable, and $\mathcal I$ is countable; hence $\mathcal M_{\mathrm{max}}^{\chi}(\Theta\mid \mathbb P)$ is countable. Taking the countable union over $n\geq 1$ gives that $\mathcal M_{\mathrm{max}}(\Theta\mid \mathbb P)$ is countable.

\end{proof}

Now we can prove Theorem $\ref{thm:A}$.

\begin{proof}[Proof of Theorem \ref{thm:A}]
    Let $\mathbb{P}$ be a $\theta$-invariant ergodic probability measure with full support. By Lemma \ref{lem:count}, it suffices to show that $\mathcal{M}_{\max}^{\mathcal{X},\chi}(\Theta \mid \mathbb{P})$ is countable for each homoclinic class $\mathcal{X}$ and each $\chi>0$.

Fix $\chi>0$ and a Borel homoclinic class $\mathcal{X}$. Let $\Sigma_s$ be an irreducible locally compact topological Markov shift, $\Sigma_b$ a subshift of finite type, and let $\pi_s:\Sigma_s\to \mathcal{X}$ and $\pi_b:\Sigma_b\to \Omega$ be the Hölder maps given by Theorem \ref{thm:1block}. Recall that:
\begin{enumerate}
    \item $\pi_s\circ \sigma_s=\Theta\circ \pi_s$ and $\pi_b\circ \sigma_b=\theta\circ \pi_b$, where $\sigma_s$ and $\sigma_b$ denote the left shift maps on $\Sigma_s$ and $\Sigma_b$, respectively;
    \item $\pi_s[\Sigma_s^{\#}]$ has full measure for every $\chi$-hyperbolic $\mu\in \mathcal{M}_e(\Theta)$ with $\mu(\mathcal{X})=1$; moreover $\pi_s:\Sigma_s^{\#}\to \mathcal{X}$ is finite-to-one, and $\pi_b:\Sigma_b\to \Omega$ is a surjective finite-to-one map;
    \item there exists a $1$-block map $\mathrm{proj}_{\Sigma_b}:\Sigma_s\to \Sigma_b$ such that
    $$
    \pi_b\circ \mathrm{proj}_{\Sigma_b}=\mathrm{proj}_{\Omega}\circ \pi_s,
    $$
    with $\widehat{\mathbb P}[\mathrm{proj}_{\Sigma_b}(\Sigma_s)]=1$, where  $\widehat{\mathbb P}$ unique measure in $M_e(\sigma_b)$ such that $(\pi_b)_*\widehat{\mathbb P}=\mathbb P.$
\end{enumerate}

Let $\mu\in \mathcal{M}_e(\Theta)$ be a hyperbolic measure of $\mathbb{P}$-maximal relative entropy. By Lemma \ref{lem:lift}, there exists $\widehat{\mu}\in \mathcal{M}_e(\sigma_s)$ such that $(\pi_s)_*\widehat{\mu}=\mu$ and $ 
h_{\mu}(\Theta)=h_{\widehat{\mu}}(\sigma_s).$ Observe that
$$
(\pi_b)_*\big[(\mathrm{proj}_{\Sigma_b})_*\widehat{\mu}\big]
=(\pi_b\circ \mathrm{proj}_{\Sigma_b})_*\widehat{\mu}
=(\mathrm{proj}_{\Omega})_*(\pi_s)_*\widehat{\mu}
=(\mathrm{proj}_{\Omega})_*\mu
=\mathbb{P}.
$$
Thus $(\mathrm{proj}_{\Sigma_b})_*\widehat{\mu}=\widehat{\mathbb{P}}$, since the lift of $\mathbb P$ via $\pi_b$ is unique (recall that $\mathbb P$ is assumed to be fully supported on $\Omega$).

\medskip
\noindent\textbf{Claim.} For the factor map $\proj_{\Sigma_b}:(\Sigma_s,\sigma_s)\to(\Sigma_b,\sigma_b)$,  $\widehat{\mu}$ is a measure of maximal $\widehat{\mathbb{P}}$-relative entropy.  

\smallskip
\noindent\emph{Proof of the claim.}
Assume that the claim is false for a contradiction. Then, there exists $\widehat{\eta}\in \mathcal{M}_e(\sigma_s)$ with $(\mathrm{proj}_{\Sigma_b})_*\widehat{\eta}=\widehat{\mathbb{P}}$ such that $h_{\widehat{\eta}}(\sigma_s\mid \widehat{\mathbb{P}})>h_{\widehat{\mu}}(\sigma_s\mid \widehat{\mathbb{P}}).$ Set $\eta:=(\pi_s)_*\widehat{\eta}$. By item (1) above, $\eta$ is $\Theta$-invariant, and
$$
(\mathrm{proj}_{\Omega})_*\eta=(\mathrm{proj}_{\Omega}\circ \pi_s)_*\widehat{\eta}
=(\pi_b)_*(\mathrm{proj}_{\Sigma_b})_*\widehat{\eta}
=(\pi_b)_*\widehat{\mathbb{P}}=\mathbb{P}.
$$
By Lemma \ref{lem:lift}(iii) $\pi_s$ is $N_{\eta}$-to-one on a set of $\eta$-full measure. Moreover, recall that since $\mathbb P$ is fully supported on $\Omega$, $\pi_b$ is $\mathbb P$-a.s. one-to-one \cite[3.18]{BowenBook}. Using that finite extensions preserve entropy, we have
\begin{align*}
h_{\mu}(\Theta\mid \mathbb{P})
&=h_{\mu}(\Theta)-h_{\mathbb{P}}(\theta)
=h_{\widehat{\mu}}(\sigma_s)-h_{\widehat{\mathbb{P}}}(\sigma_b)
=h_{\widehat{\mu}}(\sigma_s\mid \widehat{\mathbb{P}})\\
&<h_{\widehat{\eta}}(\sigma_s\mid \widehat{\mathbb{P}})
=h_{\widehat{\eta}}(\sigma_s)-h_{\widehat{\mathbb{P}}}(\sigma_b)
=h_{\eta}(\Theta)-h_{\mathbb{P}}(\theta)
=h_{\eta}(\Theta\mid \mathbb{P}).
\end{align*}
Since $\eta(\mathcal{X})=\widehat{\eta}\big(\pi_s^{-1}(\mathcal{X})\big)=\widehat{\eta}(\Sigma_s)=1,
$ Proposition \ref{prop:homoclinicclasses} implies that $\eta$ is hyperbolic, which contradicts the maximality of $\mu$ for $\mathbb{P}$-relative entropy. Therefore $\widehat{\mu}$ is a measure of $\widehat{\mathbb{P}}$-maximal relative entropy. \qed

\medskip
For each $\mu\in \mathcal{M}_{\max}^{\mathcal{X},\chi}(\Theta\mid \mathbb{P})$ we have thus associated a measure $\widehat{\mu}\in \mathcal{M}_e(\sigma_s)$ such that $\widehat{\mu}$ is a measure of maximal $\widehat{\mathbb{P}}$-relative entropy. Moreover, $(\pi_b\circ \mathrm{proj}_{\Sigma_b})_*\widehat{\mu}=\mathbb{P}.$ Define
\begin{align*}
\mathcal{M}_{\max}(\Sigma_s\mid \mathbb{P})
&:=
\left\{
\widehat{\mu}\in \mathcal{M}_e(\sigma_s)\ ;\ 
\widehat{\mu}\ \text{has maximal }\widehat{\mathbb P}\text{-relative entropy}
\right\}
\end{align*}
Then, the assignment
\begin{equation}
\mu\in \mathcal{M}_{\max}^{\mathcal{X},\chi}(\Theta\mid \mathbb{P})
\mapsto
\widehat{\mu}\in \mathcal{M}_{\max}(\Sigma_s\mid \mathbb{P})
\label{eq:proj}
\end{equation}
is well defined and injective, since $(\pi_s)_*\widehat{\mu}=\mu$.

Hence, using the injectivity of \eqref{eq:proj}, we obtain that
\begin{align}
\#\, \mathcal{M}_{\max}^{\mathcal{X},\chi}&(\Theta\mid \mathbb{P})
\leq\# \left(
\left\{\widehat{\mu}\in \mathcal{M}_e(\sigma_s);\
\widehat{\mu}\ \text{has maximal $\widehat{\mathbb{P}}$-relative entropy}
\right\}\right).\label{eq:count1}
\end{align}

Observe that since $\mathbb P$ is fully supported on $\Omega$, then $\widehat{\mathbb P}$ is fully supported on $\Sigma_b$ (as a consequence of \cite[Theorem 3.19 and Proposition 3.19]{BowenBook}). From Corollary \ref{cor:dream}, with $X=\Sigma_s$, $Y=\Sigma_b=\mathrm{proj}_{\Sigma_b}(\Sigma_s)$, $\pi=\mathrm{proj}_{\Sigma_b}$ and $\nu=\widehat{\mathbb{P}}$ it follows that the set
$$\left\{\widehat{\mu}\in \mathcal{M}_e(\sigma_s):\, 
\widehat{\mu}\ \text{has maximal $\widehat{\mathbb{P}}$-relative entropy}
\right\}$$
is at most countable. Therefore from \eqref{eq:count1}  we obtain that the set $\mathcal{M}_{\max}^{\mathcal{X},\chi}(\Theta\mid \mathbb{P})$ is countable, which completes the proof.
\end{proof}

To prove Corollary \ref{cor:B}, we recall the following theorem for skew products, which refines Ruelle's inequality. We state a convenient weak version of the result presented in \cite{KatokKifer} that suffices for our purposes.

\begin{theorem}[{\cite[Chapter 5, Theorem 3.1.1]{KatokKifer}}]\label{thm:ruelle}
Let
$$\Theta:(\omega,x)\in\Omega\times M \mapsto \left(\theta(\omega),T_\omega(x)\right)\in \Omega\times M$$
be a $\mathcal C^{1+\alpha}$ skew-product diffeomorphism, and let $\mu\in\mathcal M_e(\Theta)$. Denote the base marginal by $\mathbb P := (\proj_{\Omega})_*\mu$. Let $\sigma(\mu)=\{\lambda_1,\ldots,\lambda_n\}$ be the Lyapunov spectrum of $\Theta$ (see Definition \ref{def:LE}). Define the set of \emph{fibre} Lyapunov exponents
\[
\sigma(\mu\mid \mathbb P)
:= \Bigl\{\lambda\in\sigma(\mu);\ 
\proj_{T_xM}\bigl(E^{\lambda}_{(\omega,x)}\bigr)\neq\{0\}\ \text{for }\mu\text{-a.e. }(\omega,x)\in \Omega\times M\Bigr\},
\]
and for each $\lambda\in\sigma(\mu\mid\mathbb P)$ set the (fibre) multiplicity
\[
m_\lambda = \dim\bigl(\proj_{T_xM} E^{\lambda}_{(\omega,x)}\bigr),
\]
which is $\mu$-a.s. constant by ergodicity.

Then
$$h_\mu(\Theta \mid \mathbb P) \le\sum_{\substack{\lambda \in \sigma(\mu\mid \mathbb P)\\ \lambda>0}} \lambda\, m_\lambda.$$
\end{theorem}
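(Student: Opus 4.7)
The plan is to adapt the classical Ruelle inequality argument to the skew-product setting, working fibrewise. At every $(\omega,x)\in \Omega\times M$ the derivative of $\Theta$ has the block-triangular form
$$
\mathrm D_{(\omega,x)}\Theta = \begin{pmatrix} \mathrm D_\omega \theta & 0 \\ \partial_\omega T_\omega(x) & \mathrm D_x T_\omega \end{pmatrix},
$$
so by the standard decomposition of Lyapunov spectra for triangular cocycles, the elements of $\sigma(\mu\mid\mathbb P)$ are precisely the Lyapunov exponents of the fibre cocycle $(\omega,x)\mapsto \mathrm D_x T_\omega$ on $TM$, with multiplicities $m_\lambda$.

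Next, I would represent the relative entropy via fibrewise measurable partitions. The Abramov--Rokhlin formula together with the standard conditional-entropy calculus with respect to $\proj_\Omega^{-1}\mathcal B(\Omega)$ gives
$$
h_\mu(\Theta\mid\mathbb P) = \sup_\xi \int_\Omega H_{\mu_\omega}\left(\xi_\omega\,\Big|\,\bigvee_{n\geq 1}(T_\omega^n)^{-1}\xi_{\theta^n\omega}\right)\,\mathbb P(\mathrm d\omega),
$$
where the supremum is over countable, fibrewise measurable partitions $\xi=\{A_\omega\}_{\omega\in\Omega}$ of finite fibre entropy.

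The core step is a fibred Mañé-type volume-counting estimate. For a suitably chosen $\xi$ of small fibre diameter, one bounds the number of atoms of $\bigvee_{i=0}^{n-1}(T_\omega^i)^{-1}\xi_{\theta^i\omega}$ that intersect a small ball around $(\omega,x)$ by
$$
C(\omega,x)\,\exp\left(n\sum_{\lambda\in\sigma(\mu\mid\mathbb P),\,\lambda>0}\lambda\,m_\lambda + n\varepsilon\right),
$$
where $C(\omega,x)$ is tempered along $\Theta$-orbits and arises from Pesin blocks of the fibre cocycle. This bound follows from diagonalising the fibre Oseledets splitting and exploiting that the volume expansion of $\mathrm D_x T_\omega^n$ in the positive-exponent fibre directions is, up to subexponential error, $\exp(n\sum_{\lambda>0}\lambda\,m_\lambda)$. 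Taking $\log$, dividing by $n$, integrating against $\mu$, and sending $\varepsilon\to 0$ then closes the estimate.

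The main obstacle is the measurable regularisation required to implement this fibrewise argument uniformly. One must construct measurable Pesin blocks $\Lambda_k\subset \Omega\times M$ exhausting a set of full $\mu$-measure, on which the fibre Oseledets splitting varies continuously and the constants $C(\omega,x)$ are uniformly controlled, and simultaneously design measurable partitions adapted to those blocks. Once this regularisation is in place, combining the partition formula above with the volume bound and Fatou's lemma (applied as $\varepsilon\to 0$ and along $\Lambda_k\nearrow \Omega\times M$) yields the desired bound $h_\mu(\Theta\mid\mathbb P)\leq \sum_{\lambda>0}\lambda\,m_\lambda$. This is the Katok--Strelcyn scheme for smooth systems, transported to the skew-product setting as carried out in \cite[Chapter 5]{KatokKifer}.
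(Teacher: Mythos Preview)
The paper does not give its own proof of this statement: Theorem~\ref{thm:ruelle} is quoted as a known result from \cite[Chapter~5, Theorem~3.1.1]{KatokKifer}, introduced with the remark that ``we state a convenient weak version of the result presented in \cite{KatokKifer} that suffices for our purposes.'' There is therefore nothing in the paper to compare your argument against.

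That said, your sketch is a faithful outline of the standard proof in the cited reference: the block-triangular structure of $\mathrm D\Theta$ identifies $\sigma(\mu\mid\mathbb P)$ with the spectrum of the fibre cocycle $\mathrm D_xT_\omega$, the Abramov--Rokhlin/Bogensch\"utz formula expresses $h_\mu(\Theta\mid\mathbb P)$ via fibrewise partitions, and a fibred Ma\~n\'e volume-counting bound on Pesin blocks closes the inequality. The only point worth flagging is that the ``measurable regularisation'' you describe as the main obstacle is genuinely the bulk of the work in \cite{KatokKifer}; your sketch correctly locates it but does not resolve it, which is appropriate for a proof sketch of a cited result.
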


We can now prove Corollary \ref{cor:B}.

\begin{proof}[Proof of Corollary \ref{cor:B}]
 By virtue of Theorem \ref{thm:A}, it is sufficient to prove that every ergodic measure of $\mathbb P$-relative  maximal entropy of $\Theta$ is hyperbolic. In the notations of Theorem \ref{thm:ruelle}, it is clear that
$$\sigma(\mu) = \sigma(\theta)\cup \sigma(\mu\mid \mathbb P).$$ 

Since $\theta:\Omega\to \Omega$ is Axiom A then $0\notin \sigma(\theta)$. Using that $\mathrm{dim}(M) =2$ we have the cardinality of  $\sigma(\mu\mid \mathbb P)$ is either equal to $1$ or $2$. Therefore, to prove the corollary, it is enough to show that 
$$\sigma(\mu\mid \mathbb P) = \{\lambda_-,\lambda_+\}$$
with $\lambda_- < 0<\lambda_+$. From Theorem \ref{thm:ruelle} we have that 
\begin{align}
    0 < h_\mu(\Theta\mid \mathbb P) \leq \sum_{\substack{\lambda \in \sigma(\mu\mid \mathbb P)\\ \lambda>0}} \lambda\, m_\lambda. \label{eq:fwdineq}
\end{align}
On the other hand, applying the same result to the skew product $\Theta^{-1}$,  we obtain that 
\begin{align}
    0 <h_\mu(\Theta\mid \mathbb P) =  h_\mu(\Theta^{-1}\mid \mathbb P) \leq \sum_{\substack{\widetilde{\lambda} \in \sigma^{-1}(\mu\mid \mathbb P)\\ \widetilde{\lambda}>0}} \widetilde{\lambda} \, m_{\widetilde{\lambda}},\label{eq:reverseineq}
\end{align}
where $\sigma^{-1}(\mu\mid \mathbb P)$ denotes the set of the fibre Lyapunov exponents of $\Theta^{-1}$. Since the fibrewise Lyapunov spectrum of $\Theta^{-1}$  is the sign-reversal of that of $\Theta$, i.e. $\sigma^{-1}(\mu\mid \mathbb P)= \left\{-\lambda;\ \lambda\in \sigma(\mu\mid \mathbb P)\right\}.$
 It follows from \eqref{eq:reverseineq} that
\begin{align}
    0 <h_\mu(\Theta\mid \mathbb P) \leq \sum_{\substack{\widetilde{\lambda} \in \sigma^{-1}(\mu\mid \mathbb P)\\ \widetilde{\lambda}>0}} \widetilde{\lambda} m_{\widetilde{\lambda}} =  \sum_{\substack{\lambda \in \sigma(\mu\mid \mathbb P)\\ \lambda<0}} -\lambda m_{\lambda}. \label{eq:revineq2} 
\end{align}
From \eqref{eq:fwdineq} and \eqref{eq:revineq2} combined with $\mathrm{dim}(M)=2$, we obtain that ${\sigma(\mu\mid \mathbb P) = \{\lambda_-,\lambda_+\},}$ which implies that $\mu$ is a hyperbolic measure and concludes the proof of the theorem.

\end{proof}

We conclude this section by proving Proposition \ref{prop:example}.

\begin{proof}[Proof of Proposition \ref{prop:example}]

Consider $G_k:=\{x_1\in\mathbb S^1;\,|2-2\pi k\sin(2\pi x_1)|\geq 9\}.$ Choose $k_0$ large enough so that for every fixed $k\geq k_0$ we can pick disjoint intervals such that
$$\mathrm{length}(J_1)=\mathrm{length}(J_2)=\frac{1}{3},\ \text{and } J_1,J_2\subset G_k.
$$
Define the strips $S_1:=J_1\times\mathbb S^1$ and $S_2:=J_2\times\mathbb S^1\subset\mathbb T^2$;  see Figure \ref{fig:placeholder}. 
By construction,
$$
\{\,2x_1+x_2+k\cos(2\pi x_1)+f(\omega);\ x_1\in J_i\,\}=\mathbb S^1, \text{for }i\in\{1,2\},\ \text{every }x_2\in\mathbb S^1 \text{ and } \omega \in \mathbb T^2,
$$
and the map $x_1\in J_i\mapsto 2x_1-x_2+k\cos(2\pi x_1)+f(\omega)$ winds at least three times around $\mathbb S^1$ on each $i\in\{1,2\}$, since $|2-2\pi k\sin(2\pi x_1)|\geq 9$ and $\mathrm{length}(J_i)=1/3$.

\begin{figure}[hbt]
    \centering
\includegraphics[width=0.7\linewidth]{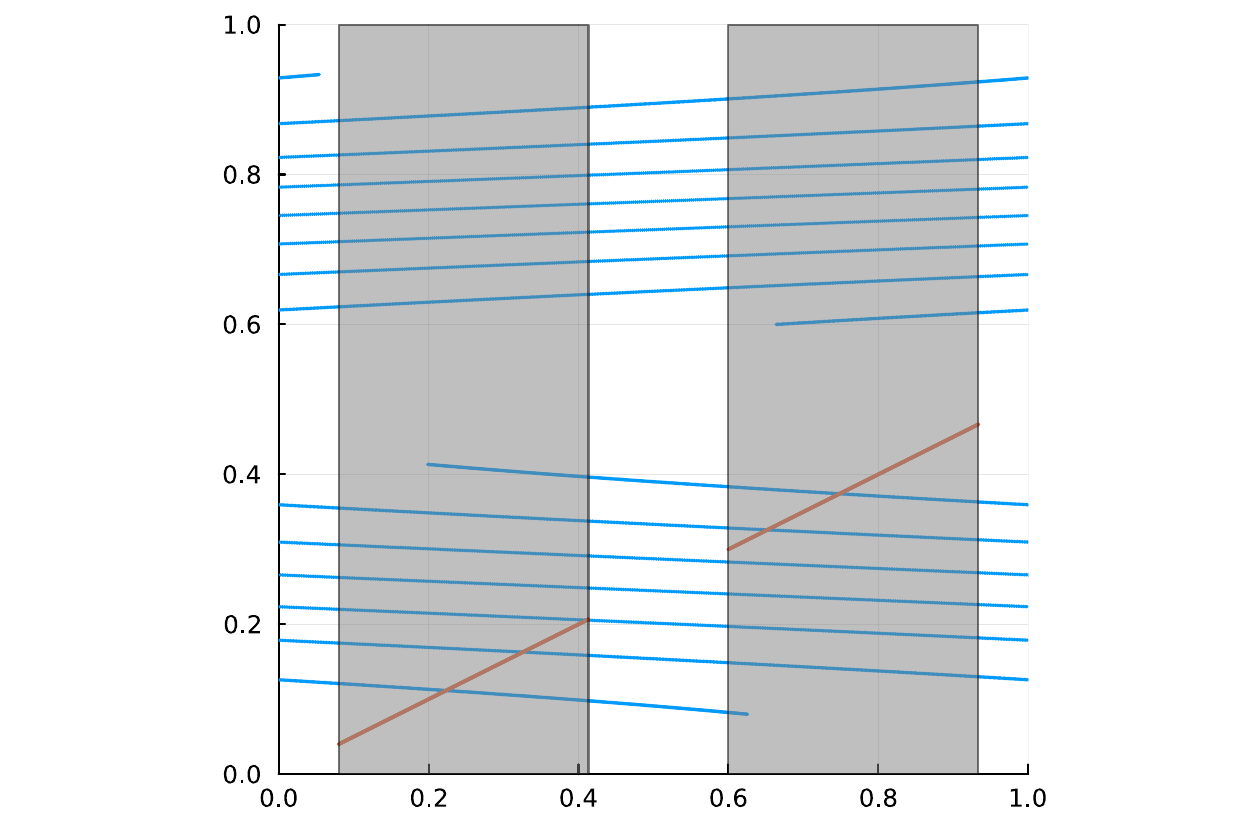}
    \caption{
Illustration of the graph transform in Steps~1–2 for $k=4$ and $\omega_{0}=0$.
The grey strips indicate $S_1$ (left) and $S_2$ (right).
The orange curves are the restrictions $\gamma_{1}=\gamma|_{J_{1}}\in\mathcal G_1$ and $\gamma_{2}=\gamma|_{J_{2}}\in\mathcal G_2$ of \(\gamma(t)=(t,t/2)\), while the blue curves are their images
$T_{\omega_{0}}\circ\gamma_{1}$ and $T_{\omega_{0}}\circ\gamma_{2}$.}
\label{fig:placeholder}
\end{figure}
\begin{step}[1]
Define, for $i\in\{1,2\}$, the family of graphs
$$
\mathcal G_i:=\big\{\ \gamma:J_i\to\mathbb T^2\ \text{ is }\mathcal C^1;\ \gamma(t)=(t,g(t))\ \text{with}\ \|g'\|_\infty<1\ \big\}.
$$
Then, for $k$ sufficiently large, for every $\omega\in\Omega$, every $\gamma\in\mathcal G_i$, and each $j\in\{1,2\}$, there exists $\widetilde{\gamma}= \widetilde{\gamma}(\omega,\gamma,j)\in\mathcal G_j$ such that
$$
\widetilde{\gamma}(J_j)\subset T_\omega\big(\gamma(J_i)\big).
$$
\end{step}

Indeed, for $\gamma(t)=(t,g(t))$ we compute
$$
T_\omega\circ\gamma(t)=\big(2t-g(t)+k\cos(2\pi t)+f(\omega),\ t\big).
$$
For $t\in J_i\subset G_k$,
$$
\left|\frac{d}{dt}\big(2t-g(t)+k\cos(2\pi t)+f(\omega)\big)\right|
\geq |2-2\pi k\sin(2\pi t)|-|g'(t)|\ \ge\ 9-1\ =\ 8.
$$
Hence $t\mapsto 2t-g(t)+k\cos(2\pi t)+f(\omega)$ is strictly monotone on $J_i$ and, since $\mathrm{length}(J_i)=1/3$, its image wraps at least twice around the horizontal circle. Let $h:\mathbb S^1\rightarrow J_i$
be any inverse branch of
$$
t\in J_1 \mapsto 2t-g(t)+k\cos(2\pi t)+f(\omega) \in \mathbb S^1
$$
Then
$$
\{(s,h(s));\ s\in J_j\}\subset \{\,T_\omega\circ\gamma(t);\ t\in J_i\,\}.
$$
Define $\widetilde{\gamma}(s):=(s,h(s))$. By the chain rule and the identity
$$
h\big(2t-g(t)+k\cos(2\pi t)+f(\omega)\big)=t,
$$
we obtain that
$$
\sup_{y\in J_j}|h'(y)|
=\sup_{t\in J_i}\left|\frac{1}{2-g'(t)-2\pi k\sin(2\pi t)}\right|
\leq\frac{1}{8}<1.
$$
Therefore, $\widetilde{\gamma}\in\mathcal G_j$ and Step 1 follows.

\begin{step}[2]
Given a sequence $\un r=(r_\ell)_{\ell\in\mathbb Z}\in\{1,2\}^{\mathbb Z}$ and any $\omega\in\Omega$, there exists $x\in\mathbb T^{2}$ such that $T_\omega^{\ell}(x)\in S_{r_\ell}$, for every $\ell\in\mathbb N.$
\end{step}

Recall the definition of the family $\mathcal G_\ell$ from Step~1, with $\ell\in\{1,2\}$. Consider the graph $\gamma_{0}\colon J_{r_0}\to\mathbb T^{2}$ defined by $\gamma_{0}(t)=(t,0)$. Clearly $\gamma_{0}\in\mathcal G_{r_0}$. By Step~1, there exists $\gamma_{1}\colon J_{r_1}\to\mathbb T^{2}$ with $\gamma_{1}\in\mathcal G_{r_1}$ such that
$$
\gamma_{1}(J_{r_1})\subset T_\omega\big(\gamma_{0}(J_{r_0})\big).
$$
Proceeding inductively, we construct graphs $\{\gamma_{\ell}\colon J_{r_\ell}\to\mathbb T^{2}\}_{\ell\in\mathbb N}$ with $\gamma_\ell\in\mathcal G_{r_\ell}$ such that for each $\ell\geq 1$,
\begin{align}\gamma_{\ell}(J_{r_\ell})&\subset  T_{\theta^{\ell-1}\omega} \circ \gamma_{\ell-1}(J_{r_{\ell-1}})\nonumber\\
&\subset \ldots \subset T_{\theta^{\ell-1}\omega}\circ\cdots\circ T_{\theta\omega}\circ T_\omega\circ\gamma_{0}(J_{r_0})
= T_\omega^{\ell}\circ\gamma_{0}(J_{r_0}).\label{eq:inclusion}
\end{align}

For each $\ell\in\mathbb N$ define the compact sets
$$
\mathcal{T}_\ell:=\big\{\,t\in J_{r_0};\ T_\omega^{j}\big(\gamma_{0}(t)\big)\in S_{r_j}\ \text{ for every }j\in\{0,\ldots, \ell\}\,\big\}.
$$
By construction $\gamma_{\ell} \in \mathcal G_{r_\ell}$. Therefore, there exists $g_\ell:J_{r_\ell}\to \R$ such that $\gamma_\ell(t) = (t,g_\ell(t))$.
Thus, $\gamma_\ell(J_{r_\ell})=\{(t,g_\ell(t));t\in J_{r_\ell}\}\subset S_{r_\ell}$, and from \eqref{eq:inclusion} we obtain that $(\mathcal{T}_\ell)_{\ell\in\mathbb N}$ is a nested sequence of nonempty compact sets, hence
$$
\bigcap_{\ell=0}^{\infty}\mathcal{T}_\ell\neq\varnothing.
$$
Choose $t_0\in\bigcap_{\ell=0}^{\infty}\mathcal{T}_\ell$ and set $x:=\gamma_{0}(t_0)=(t_0,0)$. Then, for every $\ell\in\mathbb N$,
$$
T_\omega^{\ell}(x)=T_\omega^{\ell}\big(\gamma_{0}(t_0)\big)\in S_{r_\ell},
$$
which proves Step 2.

\begin{step}[3]
We show that $h_{\mathrm{top}}(\Theta \mid \mathbb P)>0$, which completes the proof of the proposition.
\end{step}
Given $\omega\in\Omega$, define
$$
X_\omega:=\bigcap_{n=0}^{\infty} T_\omega^{-n}\left(J_1\sqcup J_2\right).
$$
By Step 2, $X_\omega \neq \varnothing$, and the coding map
\begin{align*}
h_\omega\colon X_\omega &\rightarrow \{1,2\}^{\mathbb N} \\
x &\mapsto \big(J(T_\omega^{i}x)\big)_{i\geq 0},
\end{align*}
where $J(z)=i$ if $z\in J_i$ for $i\in\{1,2\}$, is surjective.

Set $X_\Omega:=\{(\omega,x);\ \omega\in\Omega\ \text{and}\ x\in X_\omega\}.$ Define the forward-time semi-conjugacy
$$
\pi\colon (\omega,x)\in X_\Omega \mapsto (\omega,h_\omega(x))\in \Omega\times\{1,2\}^{\mathbb N}.
$$
Let $\sigma\colon\{1,2\}^{\mathbb N}\to\{1,2\}^{\mathbb N}$ be the left shift and set
$$
\sigma_\Omega\colon (\omega,s)\in \Omega\times\{1,2\}^{\mathbb N}\to (A\omega,\sigma(s))\in \Omega\times\{1,2\}^{\mathbb N}.
$$
By construction, $\pi\circ \Theta=\sigma_\Omega\circ \pi.$
 Since $\pi$ is a surjective factor map and $\Theta$ and $\sigma_\Omega$ share the same base $(\Omega,A)$, we obtain
$$
h_{\mathrm{top}}(\Theta\mid \mathbb P) \geq h_{\mathrm{top}}(\sigma_\Omega\mid \mathbb P) =h_{\mathrm{top}}(\sigma) =\log 2>0.
$$

\end{proof}

\section*{Acknowledgements}

The authors acknowledge Yuri Lima for pointing out an inconsistency in the proof of Theorem \ref{thm:1block} in an earlier version, and Anthony Quas for valuable discussions and for suggesting an improved approach for the proof of Theorem \ref{thm:dream}.
GF and MMC are supported by an Australian Research Council Laureate Fellowship (FL230100088).

\printbibliography

@article {BenOvadia2018,
    AUTHOR = {Ben Ovadia, Snir},
     TITLE = {Symbolic dynamics for non-uniformly hyperbolic diffeomorphisms
              of compact smooth manifolds},
   JOURNAL = {J. Mod. Dyn.},
  FJOURNAL = {Journal of Modern Dynamics},
    VOLUME = {13},
      YEAR = {2018},
     PAGES = {43--113},
      ISSN = {1930-5311,1930-532X},
   MRCLASS = {37D25 (37B10 37D35)},
  MRNUMBER = {3918259},
MRREVIEWER = {Sergei\ A.\ Dovbysh},
       DOI = {10.3934/jmd.2018013},
       URL = {https://doi.org/10.3934/jmd.2018013},
}

@article {Sarig2013,
    AUTHOR = {Sarig, Omri M.},
     TITLE = {Symbolic dynamics for surface diffeomorphisms with positive
              entropy},
   JOURNAL = {J. Amer. Math. Soc.},
  FJOURNAL = {Journal of the American Mathematical Society},
    VOLUME = {26},
      YEAR = {2013},
    NUMBER = {2},
     PAGES = {341--426},
      ISSN = {0894-0347,1088-6834},
   MRCLASS = {37D25 (37D35)},
  MRNUMBER = {3011417},
MRREVIEWER = {Vaughn\ Climenhaga},
       DOI = {10.1090/S0894-0347-2012-00758-9},
       URL = {https://doi.org/10.1090/S0894-0347-2012-00758-9},
}

@misc{buzzi2025strongpositiverecurrenceexponential,
      title={Strong positive recurrence and exponential mixing for diffeomorphisms}, 
      author={Jérôme Buzzi and Sylvain Crovisier and Omri Sarig},
      year={2025},
      eprint={2501.07455},
      archivePrefix={arXiv},
      primaryClass={math.DS},
      url={https://arxiv.org/abs/2501.07455}, 
}

@article {Quas,
    AUTHOR = {Petersen, Karl and Quas, Anthony and Shin, Sujin},
     TITLE = {Measures of maximal relative entropy},
   JOURNAL = {Ergodic Theory Dynam. Systems},
  FJOURNAL = {Ergodic Theory and Dynamical Systems},
    VOLUME = {23},
      YEAR = {2003},
    NUMBER = {1},
     PAGES = {207--223},
      ISSN = {0143-3857,1469-4417},
   MRCLASS = {37A35 (37B10 37D35)},
  MRNUMBER = {1971203},
MRREVIEWER = {Jean-Ren\'e\ Chazottes},
       DOI = {10.1017/S0143385702001153},
       URL = {https://doi.org/10.1017/S0143385702001153},
}

@article {Yoo2018,
    AUTHOR = {Yoo, Jisang},
     TITLE = {Multiplicity structure of preimages of invariant measures
              under finite-to-one factor maps},
   JOURNAL = {Trans. Amer. Math. Soc.},
  FJOURNAL = {Transactions of the American Mathematical Society},
    VOLUME = {370},
      YEAR = {2018},
    NUMBER = {11},
     PAGES = {8111--8133},
      ISSN = {0002-9947,1088-6850},
   MRCLASS = {37B10 (37A99 37B15)},
  MRNUMBER = {3852459},
MRREVIEWER = {Tom\ Meyerovitch},
       DOI = {10.1090/tran/7234},
       URL = {https://doi.org/10.1090/tran/7234},
}

@book {KatokKifer,
     TITLE = {Handbook of dynamical systems. {V}ol. 1{B}},
    EDITOR = {Hasselblatt, B. and Katok, A.},
 PUBLISHER = {Elsevier B. V., Amsterdam},
      YEAR = {2006},
     PAGES = {xii+1222},
      ISBN = {0-444-52055-4},
   MRCLASS = {37-06 (28Dxx 35-06 37-02 37Axx 37C40 37Dxx)},
  MRNUMBER = {2184980},
}

@article {Pesin,
    AUTHOR = {Pesin, Ja.\ B.},
     TITLE = {Families of invariant manifolds that correspond to nonzero
              characteristic exponents},
   JOURNAL = {Izv. Akad. Nauk SSSR Ser. Mat.},
  FJOURNAL = {Izvestiya Akademii Nauk SSSR. Seriya Matematicheskaya},
    VOLUME = {40},
      YEAR = {1976},
    NUMBER = {6},
     PAGES = {1332--1379, 1440},
      ISSN = {0373-2436},
   MRCLASS = {58F10 (34D05)},
  MRNUMBER = {458490},
MRREVIEWER = {A.\ Morimoto},
}

@article {Newhouse,
    AUTHOR = {Newhouse, Sheldon E.},
     TITLE = {Continuity properties of entropy},
   JOURNAL = {Ann. of Math. (2)},
  FJOURNAL = {Annals of Mathematics. Second Series},
    VOLUME = {129},
      YEAR = {1989},
    NUMBER = {2},
     PAGES = {215--235},
      ISSN = {0003-486X,1939-8980},
   MRCLASS = {58F11 (28D20 54C70)},
  MRNUMBER = {986792},
MRREVIEWER = {A.\ I.\ Danilenko},
       DOI = {10.2307/1971492},
       URL = {https://doi.org/10.2307/1971492},
}

@article{Oseledets68,
  author  = {Oseledec, V. I.},
  title   = {A Multiplicative Ergodic Theorem: Lyapunov Characteristic Numbers for Dynamical Systems},
  journal = {Transactions of the Moscow Mathematical Society},
  volume  = {19},
  year    = {1968},
  pages   = {197--231}
}

@article{Katok80,
  author  = {Katok, Anatole},
  title   = {Lyapunov exponents, entropy and periodic orbits for diffeomorphisms},
  journal = {Publications Math{\'e}matiques de l'IH{\'E}S},
  volume  = {51},
  year    = {1980},
  pages   = {137--173},
  doi     = {10.1007/BF02684777}
}

@article {Bowen70,
    AUTHOR = {Bowen, Rufus},
     TITLE = {Markov partitions for {A}xiom {${\rm A}$} diffeomorphisms},
   JOURNAL = {Amer. J. Math.},
  FJOURNAL = {American Journal of Mathematics},
    VOLUME = {92},
      YEAR = {1970},
     PAGES = {725--747},
      ISSN = {0002-9327,1080-6377},
   MRCLASS = {57.48 (28.00)},
  MRNUMBER = {277003},
       DOI = {10.2307/2373370},
       URL = {https://doi.org/10.2307/2373370},
}

@book{PesinBook,
    AUTHOR = {Barreira, Luis and Pesin, Yakov},
     TITLE = {Nonuniform hyperbolicity},
    SERIES = {Encyclopedia of Mathematics and its Applications},
    VOLUME = {115},
      NOTE = {Dynamics of systems with nonzero Lyapunov exponents},
 PUBLISHER = {Cambridge University Press, Cambridge},
      YEAR = {2007},
     PAGES = {xiv+513},
      ISBN = {978-0-521-83258-8; 0-521-83258-6},
   MRCLASS = {37D25 (34D08 37A25 37C40 37D10)},
  MRNUMBER = {2348606},
MRREVIEWER = {Ian\ Melbourne},
       DOI = {10.1017/CBO9781107326026},
       URL = {https://doi.org/10.1017/CBO9781107326026},
}

@book {WaltersBook,
    AUTHOR = {Walters, Peter},
     TITLE = {Ergodic theory---introductory lectures},
    SERIES = {Lecture Notes in Mathematics},
    VOLUME = {458},
 PUBLISHER = {Springer-Verlag, Berlin-New York},
      YEAR = {1975},
     PAGES = {vi+198},
   MRCLASS = {28A65},
  MRNUMBER = {480949},
MRREVIEWER = {R.\ L.\ Adler},
}

@incollection{BrinKatok1983LocalEntropy,
  author    = {Brin, Michael and Katok, Anatole},
  title     = {On local entropy},
  booktitle = {Geometric Dynamics},
  editor    = {Palis, Jacob},
  series    = {Lecture Notes in Mathematics},
  volume    = {1007},
  pages     = {30--38},
  publisher = {Springer},
  address   = {Berlin, Heidelberg},
  year      = {1983},
  doi       = {10.1007/BFb0061408},
  url       = {https://link.springer.com/chapter/10.1007/BFb0061408}
}

@article {Yujun,
    AUTHOR = {Zhu, Yujun},
     TITLE = {On local entropy of random transformations},
   JOURNAL = {Stoch. Dyn.},
  FJOURNAL = {Stochastics and Dynamics},
    VOLUME = {8},
      YEAR = {2008},
    NUMBER = {2},
     PAGES = {197--207},
      ISSN = {0219-4937,1793-6799},
   MRCLASS = {37A35 (28D05 37H99)},
  MRNUMBER = {2429200},
MRREVIEWER = {Nicolas\ Bedaride},
       DOI = {10.1142/S0219493708002275},
       URL = {https://doi.org/10.1142/S0219493708002275},
}

@article{Bogenschutz92,
  author    = {Bogensch\"{u}tz, Thomas},
  title     = {Entropy, pressure, and a variational principle for random dynamical systems},
  journal   = {Random \& Computational Dynamics},
  volume    = {1},
  year      = {1992},
  pages     = {99--116}
  % MR number varies by indexing; MathSciNet lists this paper and related RDS results by Bogensch\"utz.
}

@book {BowenBook,
    AUTHOR = {Bowen, Rufus},
     TITLE = {Equilibrium states and the ergodic theory of {A}nosov
              diffeomorphisms},
    SERIES = {Lecture Notes in Mathematics},
    VOLUME = {470},
    EDITOR = {Chazottes, Jean-Ren\'e},
   EDITION = {revised},
      NOTE = {With a preface by David Ruelle},
 PUBLISHER = {Springer-Verlag, Berlin},
      YEAR = {2008},
     PAGES = {viii+75},
      ISBN = {978-3-540-77605-5},
   MRCLASS = {37C40 (28D05 37A25 37D20)},
  MRNUMBER = {2423393},
}

@article{LedrappierWalters77,
  author  = {Ledrappier, Fran{\c c}ois and Walters, Peter},
  title   = {A relativised variational principle for continuous transformations},
  journal = {Journal of the London Mathematical Society},
  volume  = {s2-16},
  number  = {3},
  pages   = {568--576},
  year    = {1977},
  doi     = {10.1112/jlms/s2-16.3.568}
}

@article{Walters86,
  author  = {Walters, Peter},
  title   = {Relative pressure, relative equilibrium states, compensation functions and many-to-one codes between subshifts},
  journal = {Trans. Amer. Math. Soc.},
  volume  = {296},
  number  = {1},
  pages   = {1--31},
  year    = {1986},
  doi     = {10.1090/S0002-9947-1986-0837796-8}
}

@article{Kifer92,
  author  = {Kifer, Yuri},
  title   = {Equilibrium states for random expanding transformations},
  journal = {Random Comput. Dynam.},
  volume  = {1},
  number  = {1},
  pages   = {1--31},
  year    = {1992}
}

@article {AllahbakhshiQuas12,
    AUTHOR = {Allahbakhshi, Mahsa and Quas, Anthony},
     TITLE = {Class degree and relative maximal entropy},
   JOURNAL = {Trans. Amer. Math. Soc.},
  FJOURNAL = {Transactions of the American Mathematical Society},
    VOLUME = {365},
      YEAR = {2013},
    NUMBER = {3},
     PAGES = {1347--1368},
      ISSN = {0002-9947,1088-6850},
   MRCLASS = {37B10 (37A35)},
  MRNUMBER = {3003267},
       DOI = {10.1090/S0002-9947-2012-05637-6},
       URL = {https://doi.org/10.1090/S0002-9947-2012-05637-6},
}

@article {YooDecomp17,
    AUTHOR = {Yoo, Jisang},
     TITLE = {Decomposition of infinite-to-one factor codes and uniqueness
              of relative equilibrium states},
   JOURNAL = {J. Mod. Dyn.},
  FJOURNAL = {Journal of Modern Dynamics},
    VOLUME = {13},
      YEAR = {2018},
     PAGES = {271--284},
      ISSN = {1930-5311,1930-532X},
   MRCLASS = {37D35 (37A35 37B10)},
  MRNUMBER = {3918267},
MRREVIEWER = {Eusebio\ Gardella},
       DOI = {10.3934/jmd.2018021},
       URL = {https://doi.org/10.3934/jmd.2018021},
}

@article {AllahbakhshiAntonioliYoo19,
    AUTHOR = {Allahbakhshi, Mahsa and Antonioli, John and Yoo, Jisang},
     TITLE = {Relative equilibrium states and class degree},
   JOURNAL = {Ergodic Theory Dynam. Systems},
  FJOURNAL = {Ergodic Theory and Dynamical Systems},
    VOLUME = {39},
      YEAR = {2019},
    NUMBER = {4},
     PAGES = {865--888},
      ISSN = {0143-3857,1469-4417},
   MRCLASS = {37D35},
  MRNUMBER = {3916837},
MRREVIEWER = {Tanja\ Isabelle\ Schindler},
       DOI = {10.1017/etds.2017.50},
       URL = {https://doi.org/10.1017/etds.2017.50},
}

@article {ArbietoMatheusOliveira03,
    AUTHOR = {Arbieto, Alexander and Matheus, Carlos and Oliveira, Krerley},
     TITLE = {Equilibrium states for random non-uniformly expanding maps},
   JOURNAL = {Nonlinearity},
  FJOURNAL = {Nonlinearity},
    VOLUME = {17},
      YEAR = {2004},
    NUMBER = {2},
     PAGES = {581--593},
      ISSN = {0951-7715,1361-6544},
   MRCLASS = {37A99 (37D25 37D35 37H99)},
  MRNUMBER = {2039060},
MRREVIEWER = {J\'er\^ome\ Buzzi},
       DOI = {10.1088/0951-7715/17/2/013},
       URL = {https://doi.org/10.1088/0951-7715/17/2/013},
}

@article {Buzzi1,
    AUTHOR = {Buzzi, J\'er\^ome},
     TITLE = {Maximal entropy measures for piecewise affine surface
              homeomorphisms},
   JOURNAL = {Ergodic Theory Dynam. Systems},
  FJOURNAL = {Ergodic Theory and Dynamical Systems},
    VOLUME = {29},
      YEAR = {2009},
    NUMBER = {6},
     PAGES = {1723--1763},
      ISSN = {0143-3857,1469-4417},
   MRCLASS = {37A35 (28D20 37B40 37E30)},
  MRNUMBER = {2563090},
MRREVIEWER = {Heber\ Enrich},
       DOI = {10.1017/S0143385708000953},
       URL = {https://doi.org/10.1017/S0143385708000953},
}

@article {BCS,
    AUTHOR = {Buzzi, J\'er\^ome and Crovisier, Sylvain and Sarig, Omri},
     TITLE = {Measures of maximal entropy for surface diffeomorphisms},
   JOURNAL = {Ann. of Math. (2)},
  FJOURNAL = {Annals of Mathematics. Second Series},
    VOLUME = {195},
      YEAR = {2022},
    NUMBER = {2},
     PAGES = {421--508},
      ISSN = {0003-486X,1939-8980},
   MRCLASS = {37C40 (37D25 37D30 37E30)},
  MRNUMBER = {4387233},
MRREVIEWER = {Radu\ Saghin},
       DOI = {10.4007/annals.2022.195.2.2},
       URL = {https://doi.org/10.4007/annals.2022.195.2.2},
}

@article {Lima,
    AUTHOR = {Lima, Yuri and Sarig, Omri M.},
     TITLE = {Symbolic dynamics for three-dimensional flows with positive
              topological entropy},
   JOURNAL = {J. Eur. Math. Soc. (JEMS)},
  FJOURNAL = {Journal of the European Mathematical Society (JEMS)},
    VOLUME = {21},
      YEAR = {2019},
    NUMBER = {1},
     PAGES = {199--256},
      ISSN = {1435-9855,1435-9863},
   MRCLASS = {37B10 (37C27 37D25 37D40 53D25)},
  MRNUMBER = {3880208},
MRREVIEWER = {Nicolas\ Bedaride},
       DOI = {10.4171/JEMS/834},
       URL = {https://doi.org/10.4171/JEMS/834},
}

@article{AtnipFroylandGonzalezTokmanVaienti2021CMP,
  author  = {Atnip, Jason and Froyland, Gary and Gonz{\'a}lez-Tokman, Cecilia and Vaienti, Sandro},
  title   = {Thermodynamic Formalism for Random Weighted Covering Systems},
  journal = {Communications in Mathematical Physics},
  year    = {2021},
  volume  = {386},
  number  = {2},
  pages   = {819--902},
  doi     = {10.1007/s00220-021-04156-1}
}

@article{AtnipFroylandGonzalezTokmanVaienti2023ETDS,
  author  = {Atnip, Jason and Froyland, Gary and Gonz{\'a}lez-Tokman, Cecilia and Vaienti, Sandro},
  title   = {Equilibrium states for non-transitive random open and closed dynamical systems},
  journal = {Ergodic Theory and Dynamical Systems},
  year    = {2023},
  volume  = {43},
  number  = {10},
  pages   = {3193--3215},
  doi     = {10.1017/etds.2022.68}
}

@article{AtnipFroylandGonzalezTokmanVaienti2024DM,
  author        = {Atnip, Jason and Froyland, Gary and Gonz{\'a}lez-Tokman, Cecilia and Vaienti, Sandro},
  title         = {Thermodynamic Formalism and Perturbation Formulae for Quenched Random Open Dynamical Systems},
  journal       = {Dissertationes Mathematicae},
  year          = {2024},
  eprint        = {2307.00774},
  archivePrefix = {arXiv},
  primaryClass  = {math.DS}
}

@article{StadlbauerSuzukiVarandas2021CMP,
  author  = {Stadlbauer, Manuel and Suzuki, Kenichiro and Varandas, Paulo},
  title   = {Thermodynamic formalism for random non-uniformly expanding maps},
  journal = {Communications in Mathematical Physics},
  year    = {2021},
  volume  = {385},
  number  = {1},
  pages   = {369--427},
  doi     = {10.1007/s00220-021-04088-w}
}

@article{BogenschutzGundlach1995ETDS,
  author  = {Bogensch{\"u}tz, Thomas and Gundlach, Volker M.},
  title   = {Ruelle's transfer operator for random subshifts of finite type},
  journal = {Ergodic Theory and Dynamical Systems},
  year    = {1995},
  volume  = {15},
  number  = {3},
  pages   = {413--447},
  doi     = {10.1017/S0143385700008464}
}

@book{MSU11,
  author    = {Mayer, Volker and Skorulski, Bart{\l}omiej and Urba{\'n}ski, Mariusz},
  title     = {Distance Expanding Random Mappings, Thermodynamical Formalism, Gibbs Measures and Fractal Geometry},
  series    = {Lecture Notes in Mathematics},
  volume    = {2036},
  year      = {2011},
  publisher = {Springer},
  address   = {Berlin, Heidelberg},
  isbn      = {978-3-642-23649-5},
  doi       = {10.1007/978-3-642-23650-1}
}

@article{DragicevicFroylandGonzalezTokmanVaienti2020TAMS,
  author  = {Dragi\v{c}evi\'c, Davor and Froyland, Gary and Gonz{\'a}lez-Tokman, Cecilia and Vaienti, Sandro},
  title   = {A spectral approach for quenched limit theorems for random hyperbolic dynamical systems},
  journal = {Transactions of the American Mathematical Society},
  year    = {2020},
  volume  = {373},
  number  = {1},
  pages   = {629--664},
  doi     = {10.1090/tran/7943}
}

@article{DenkerKiferStadlbauer2008DCDS,
  author  = {Denker, Manfred and Kifer, Yuri and Stadlbauer, Manuel},
  title   = {Thermodynamic formalism for random countable Markov shifts},
  journal = {Discrete and Continuous Dynamical Systems},
  year    = {2008},
  volume  = {22},
  number  = {1-2},
  pages   = {131--164},
  doi     = {10.3934/dcds.2008.22.131}
}

@article {Ledrappier-Young,
    AUTHOR = {Ledrappier, F. and Young, L.-S.},
     TITLE = {Entropy formula for random transformations},
   JOURNAL = {Probab. Theory Related Fields},
  FJOURNAL = {Probability Theory and Related Fields},
    VOLUME = {80},
      YEAR = {1988},
    NUMBER = {2},
     PAGES = {217--240},
      ISSN = {0178-8051,1432-2064},
   MRCLASS = {58F11 (60J05)},
  MRNUMBER = {968818},
MRREVIEWER = {Pawe\l\ G\'ora},
       DOI = {10.1007/BF00356103},
       URL = {https://doi.org/10.1007/BF00356103},
}

@article {BlumenthalYoung2019,
    AUTHOR = {Blumenthal, Alex and Young, Lai-Sang},
     TITLE = {Equivalence of physical and {SRB} measures in random dynamical
              systems},
   JOURNAL = {Nonlinearity},
  FJOURNAL = {Nonlinearity},
    VOLUME = {32},
      YEAR = {2019},
    NUMBER = {4},
     PAGES = {1494--1524},
      ISSN = {0951-7715,1361-6544},
   MRCLASS = {37H10 (37C40 37D45)},
  MRNUMBER = {3925383},
MRREVIEWER = {Romain\ Aimino},
       DOI = {10.1088/1361-6544/aafaa8},
       URL = {https://doi.org/10.1088/1361-6544/aafaa8},
}

@article{GundlachKifer2000DCDS,
  author  = {Gundlach, Volker M. and Kifer, Yuri},
  title   = {Expansiveness, specification, and equilibrium states for random bundle transformations},
  journal = {Discrete and Continuous Dynamical Systems},
  year    = {2000},
  volume  = {6},
  number  = {1},
  pages   = {89--120}
}

@article {Stadlbauer2010StochDyn,
    AUTHOR = {Stadlbauer, Manuel},
     TITLE = {On random topological {M}arkov chains with big images and
              preimages},
   JOURNAL = {Stoch. Dyn.},
  FJOURNAL = {Stochastics and Dynamics},
    VOLUME = {10},
      YEAR = {2010},
    NUMBER = {1},
     PAGES = {77--95},
      ISSN = {0219-4937,1793-6799},
   MRCLASS = {37D35 (37H99)},
  MRNUMBER = {2604679},
MRREVIEWER = {Jiu\ Ding},
       DOI = {10.1142/S0219493710002863},
       URL = {https://doi.org/10.1142/S0219493710002863},
}

@article {Stadlbauer2017ETDS,
    AUTHOR = {Stadlbauer, Manuel},
     TITLE = {Coupling methods for random topological {M}arkov chains},
   JOURNAL = {Ergodic Theory Dynam. Systems},
  FJOURNAL = {Ergodic Theory and Dynamical Systems},
    VOLUME = {37},
      YEAR = {2017},
    NUMBER = {3},
     PAGES = {971--994},
      ISSN = {0143-3857,1469-4417},
   MRCLASS = {37D35 (49K45 60J05)},
  MRNUMBER = {3628927},
MRREVIEWER = {Romain\ Aimino},
       DOI = {10.1017/etds.2015.61},
       URL = {https://doi.org/10.1017/etds.2015.61},
}

@article {Gurevich1,
    AUTHOR = {Gurevi\v c, B. M.},
     TITLE = {Topological entropy of a countable {M}arkov chain},
   JOURNAL = {Dokl. Akad. Nauk SSSR},
  FJOURNAL = {Doklady Akademii Nauk SSSR},
    VOLUME = {187},
      YEAR = {1969},
     PAGES = {715--718},
      ISSN = {0002-3264},
   MRCLASS = {60.65 (28.00)},
  MRNUMBER = {263162},
MRREVIEWER = {I.\ Csisz\'ar},
}

@article {Gurevich2,
    AUTHOR = {Gurevi\v c, B. M.},
     TITLE = {Shift entropy and {M}arkov measures in the space of paths of a
              countable graph},
   JOURNAL = {Dokl. Akad. Nauk SSSR},
  FJOURNAL = {Doklady Akademii Nauk SSSR},
    VOLUME = {192},
      YEAR = {1970},
     PAGES = {963--965},
      ISSN = {0002-3264},
   MRCLASS = {28.70 (54.00)},
  MRNUMBER = {268356},
MRREVIEWER = {E.\ M.\ Klimko},
}

@incollection {SarigThermo,
    AUTHOR = {Sarig, Omri M.},
     TITLE = {Thermodynamic formalism for countable {M}arkov shifts},
 BOOKTITLE = {Hyperbolic dynamics, fluctuations and large deviations},
    SERIES = {Proc. Sympos. Pure Math.},
    VOLUME = {89},
     PAGES = {81--117},
 PUBLISHER = {Amer. Math. Soc., Providence, RI},
      YEAR = {2015},
      ISBN = {978-1-4704-1112-1},
   MRCLASS = {37D35 (37A25 37C05 37C30 37C35)},
  MRNUMBER = {3309096},
MRREVIEWER = {Manuel\ Stadlbauer},
       DOI = {10.1090/pspum/089/01485},
       URL = {https://doi.org/10.1090/pspum/089/01485},
}

@article {ParryPollicott1990,
    AUTHOR = {Parry, William and Pollicott, Mark},
     TITLE = {Zeta functions and the periodic orbit structure of hyperbolic
              dynamics},
   JOURNAL = {Ast\'erisque},
  FJOURNAL = {Ast\'erisque},
    NUMBER = {187-188},
      YEAR = {1990},
     PAGES = {268},
      ISSN = {0303-1179,2492-5926},
   MRCLASS = {58F20 (58F11 58F15)},
  MRNUMBER = {1085356},
MRREVIEWER = {Nicola\u i\ T.\ A.\ Haydn},
}

@book {KatokBook,
    AUTHOR = {Katok, Anatole and Hasselblatt, Boris},
     TITLE = {Introduction to the modern theory of dynamical systems},
    SERIES = {Encyclopedia of Mathematics and its Applications},
    VOLUME = {54},
      NOTE = {With a supplementary chapter by Katok and Leonardo Mendoza},
 PUBLISHER = {Cambridge University Press, Cambridge},
      YEAR = {1995},
     PAGES = {xviii+802},
      ISBN = {0-521-34187-6},
   MRCLASS = {58Fxx (34Cxx 34Dxx 58-01 58F11 58F15)},
  MRNUMBER = {1326374},
MRREVIEWER = {Edoh\ Amiran},
       DOI = {10.1017/CBO9780511809187},
       URL = {https://doi.org/10.1017/CBO9780511809187},
}

@article {Alves2023,
    AUTHOR = {Alves, Jos\'e{} F. and Bahsoun, Wael and Ruziboev, Marks and
              Varandas, Paulo},
     TITLE = {Quenched decay of correlations for nonuniformly hyperbolic
              random maps with an ergodic driving system},
   JOURNAL = {Nonlinearity},
  FJOURNAL = {Nonlinearity},
    VOLUME = {36},
      YEAR = {2023},
    NUMBER = {6},
     PAGES = {3294--3318},
      ISSN = {0951-7715,1361-6544},
   MRCLASS = {37D35 (37A25 37A50 37H05)},
  MRNUMBER = {4588339},
MRREVIEWER = {Xu\ Zhang},
       DOI = {10.1088/1361-6544/acd220},
       URL = {https://doi.org/10.1088/1361-6544/acd220},
}

@article {Liu2024,
    AUTHOR = {Liu, Xue},
     TITLE = {Exponential decay of random correlations for random {A}nosov
              systems mixing on fibers},
   JOURNAL = {J. Differential Equations},
  FJOURNAL = {Journal of Differential Equations},
    VOLUME = {378},
      YEAR = {2024},
     PAGES = {1--93},
      ISSN = {0022-0396,1090-2732},
   MRCLASS = {37A25 (37C30 37H99)},
  MRNUMBER = {4643429},
MRREVIEWER = {Jianyu\ Chen},
       DOI = {10.1016/j.jde.2023.09.018},
       URL = {https://doi.org/10.1016/j.jde.2023.09.018},
}

@misc{lima2025symbolicdynamicsnonuniformlyhyperbolic,
      title={Symbolic dynamics for non-uniformly hyperbolic flows in high dimension}, 
      author={Yuri Lima and Juan Carlos Mongez and João Paulo Nascimento},
      year={2025},
      eprint={2509.09050},
      archivePrefix={arXiv},
      primaryClass={math.DS},
      url={https://arxiv.org/abs/2509.09050}, 
}

@article {Lima3,
    AUTHOR = {Buzzi, J\'er\^ome and Crovisier, Sylvain and Lima, Yuri},
     TITLE = {Symbolic dynamics for large non-uniformly hyperbolic sets of
              three dimensional flows},
   JOURNAL = {Adv. Math.},
  FJOURNAL = {Advances in Mathematics},
    VOLUME = {479},
      YEAR = {2025},
     PAGES = {Paper No. 110410},
      ISSN = {0001-8708,1090-2082},
   MRCLASS = {37B10 (37C10 37C40 37D25 37D35)},
  MRNUMBER = {4927436},
       DOI = {10.1016/j.aim.2025.110410},
       URL = {https://doi.org/10.1016/j.aim.2025.110410},
}

@misc{marin2025exponentialmixingmeasuresmaximal,
      title={Exponential mixing of measures of maximal entropy for certain skew products}, 
      author={Karina Marin and Mauricio Poletti and Filiphe Veiga},
      year={2025},
      eprint={2509.09908},
      archivePrefix={arXiv},
      primaryClass={math.DS},
      url={https://arxiv.org/abs/2509.09908}, 
}

@book{Sinai1994,
  author    = {Y. G. Sinai},
  title     = {Topics in Ergodic Theory},
  series    = {Princeton Mathematical Series},
  volume    = {44},
  publisher = {Princeton University Press},
  year      = {1994}
}

@article {Obata2021,
    AUTHOR = {Obata, Davi},
     TITLE = {Uniqueness of the measure of maximal entropy for the standard
              map},
   JOURNAL = {Comment. Math. Helv.},
  FJOURNAL = {Commentarii Mathematici Helvetici. A Journal of the Swiss
              Mathematical Society},
    VOLUME = {96},
      YEAR = {2021},
    NUMBER = {1},
     PAGES = {79--111},
      ISSN = {0010-2571,1420-8946},
   MRCLASS = {37D25 (37A35 37D35)},
  MRNUMBER = {4228615},
MRREVIEWER = {Felipe\ Garcia-Ramos},
       DOI = {10.4171/cmh/508},
       URL = {https://doi.org/10.4171/cmh/508},
}

@article {Poletti1,
    AUTHOR = {Araujo, Ermerson and Lima, Yuri and Poletti, Mauricio},
     TITLE = {Symbolic dynamics for nonuniformly hyperbolic maps with
              singularities in high dimension},
   JOURNAL = {Mem. Amer. Math. Soc.},
  FJOURNAL = {Memoirs of the American Mathematical Society},
    VOLUME = {301},
      YEAR = {2024},
    NUMBER = {1511},
     PAGES = {vi+117},
      ISSN = {0065-9266,1947-6221},
      ISBN = {978-1-4704-7133-0},
   MRCLASS = {37B10 (37D25)},
  MRNUMBER = {4808713},
MRREVIEWER = {Yunhua\ Zhou},
       DOI = {10.1090/memo/1511},
       URL = {https://doi.org/10.1090/memo/1511},
}

@misc{lima2024measuresmaximalentropynonuniformly,
      title={Measures of maximal entropy for non-uniformly hyperbolic maps}, 
      author={Yuri Lima and Davi Obata and Mauricio Poletti},
      year={2024},
      eprint={2405.04676},
      archivePrefix={arXiv},
      primaryClass={math.DS},
      url={https://arxiv.org/abs/2405.04676}, 
}

@article {YuriCarlos,
    AUTHOR = {Lima, Yuri and Matheus, Carlos},
     TITLE = {Symbolic dynamics for non-uniformly hyperbolic surface maps
              with discontinuities},
   JOURNAL = {Ann. Sci. \'Ec. Norm. Sup\'er. (4)},
  FJOURNAL = {Annales Scientifiques de l'\'Ecole Normale Sup\'erieure.
              Quatri\`eme S\'erie},
    VOLUME = {51},
      YEAR = {2018},
    NUMBER = {1},
     PAGES = {1--38},
      ISSN = {0012-9593,1873-2151},
   MRCLASS = {37B10 (37B25 37D30 37D50)},
  MRNUMBER = {3764037},
MRREVIEWER = {Puneet\ Sharma},
       DOI = {10.24033/asens.2350},
       URL = {https://doi.org/10.24033/asens.2350},
}

@article {Yuri2,
    AUTHOR = {Lima, Yuri},
     TITLE = {Symbolic dynamics for one dimensional maps with nonuniform
              expansion},
   JOURNAL = {Ann. Inst. H. Poincar\'e{} C Anal. Non Lin\'eaire},
  FJOURNAL = {Annales de l'Institut Henri Poincar\'e{} C. Analyse Non
              Lin\'eaire},
    VOLUME = {37},
      YEAR = {2020},
    NUMBER = {3},
     PAGES = {727--755},
      ISSN = {0294-1449,1873-1430},
   MRCLASS = {37B10 (37D25 37E05)},
  MRNUMBER = {4093615},
MRREVIEWER = {Pawe\l\ G\'ora},
       DOI = {10.1016/j.anihpc.2019.10.001},
       URL = {https://doi.org/10.1016/j.anihpc.2019.10.001},
}

@article {Manoussos,
    AUTHOR = {Manoussos, Antonios},
     TITLE = {A {B}irkhoff type transitivity theorem for non-separable
              completely metrizable spaces with applications to linear
              dynamics},
   JOURNAL = {J. Operator Theory},
  FJOURNAL = {Journal of Operator Theory},
    VOLUME = {70},
      YEAR = {2013},
    NUMBER = {1},
     PAGES = {165--174},
      ISSN = {0379-4024,1841-7744},
   MRCLASS = {47A16 (37D20 54H20)},
  MRNUMBER = {3085821},
MRREVIEWER = {Huse\ Fatki\'c},
       DOI = {10.7900/jot.2011may12.1971},
       URL = {https://doi-org.wwwproxy1.library.unsw.edu.au/10.7900/jot.2011may12.1971},
}

@unpublished{SarigTF-TMS-2009,
  author       = {Sarig, Omri},
  title        = {Lecture Notes on Thermodynamic Formalism for Topological Markov Shifts},
  institution  = {Pennsylvania State University},
  year         = {2009},
  month        = may,
  note         = {Lecture notes, Spring 2009. Version dated May 5, 2009},
  url          = {https://www.weizmann.ac.il/math/sarigo/sites/math.sarigo/files/uploads/tdfnotes.pdf},
  urldate      = {2025-11-08}
}

@article {BowenGibbs,
    AUTHOR = {Bowen, Rufus},
     TITLE = {Some systems with unique equilibrium states},
   JOURNAL = {Math. Systems Theory},
  FJOURNAL = {Mathematical Systems Theory. An International Journal on
              Mathematical Computing Theory},
    VOLUME = {8},
      YEAR = {1974},
      note = {Published 1974/75},
    NUMBER = {3},
     PAGES = {193--202},
      ISSN = {0025-5661},
   MRCLASS = {28A65},
  MRNUMBER = {399413},
MRREVIEWER = {B.\ Weiss},
       DOI = {10.1007/BF01762666},
       URL = {https://doi-org.wwwproxy1.library.unsw.edu.au/10.1007/BF01762666},
}

\end{document}